\newtheorem{thm}{Theorem}[section]
\newtheorem{cor}[thm]{Corollary}
\newtheorem{lem}[thm]{Lemma}
\theoremstyle{definition}
\newtheorem{defn}[thm]{Definition}
\theoremstyle{remark}
\newtheorem{rem}[thm]{Remark}
\newtheorem{ex}[thm]{Example}
\numberwithin{equation}{section}
\newcommand{\RR}{\mathbb{R}}        
\newcommand{\ZZ}{\mathbb{Z}}        
\newcommand{\Hn}[1]{\mathcal{H}_{#1}(\RR^{3})}      
\newcommand{\Pn}[1]{\mathcal{P}_{#1}(\RR^{3})}      
\newcommand{\SO}{\mathrm{SO}}       
\newcommand{\Pie}{\mathbb{P}\mathrm{iez}}    
\newcommand{\OO}{\mathrm{O}}        
\newcommand{\triv}{\mathds{1}}		
\newcommand{\DD}{\mathbb{D}}        
\newcommand{\octa}{\mathbb{O}}      
\newcommand{\ico}{\mathbb{I}}       
\newcommand{\tetra}{\mathbb{T}}     
\newcommand{\xx}{\mathbf{x}}   
\newcommand{\vv}{\mathbf{v}}   
\newcommand{\ww}{\mathbf{w}}   
\newcommand{\ii}{\mathbf{i}}   
\newcommand{\jj}{\mathbf{j}}   
\newcommand{\kk}{\mathbf{k}}   
\newcommand{\Idd}{I}
\newcommand{\avt}{\mathbf{vt}}
\newcommand{\aet}{\mathbf{et}}
\newcommand{\afc}{\mathbf{fc}}
\newcommand{\avc}{\mathbf{vc}}
\newcommand{\aec}{\mathbf{ec}}
\newcommand{\afd}{\mathbf{fd}}
\newcommand{\avd}{\mathbf{vd}}
\newcommand{\aed}{\mathbf{ed}}
\newcommand{\QQ}{\mathbf{Q}}   
\newcommand{\bp}{\mathrm{p}} 
\newcommand{\set}[1]{\left\{#1\right\}}  
\begin{document}

\title[Effective computation of isotropy classes]{Effective computation of $\SO(3)$ and $\OO(3)$ linear representations symmetry classes}%

\author{M. Olive}
\address{LMT (ENS Paris-Saclay, CNRS, Université Paris-Saclay), F-94235 Cachan
Cedex, France}
\email{marc.olive@math.cnrs.fr}

\subjclass[2010]{20C35, 22C05}%
\keywords{Isotropy classes; Clips; Linear representations; Symmetry classes; Tensorial representations}%

\date{\today}%
\begin{abstract}
  We propose a general algorithm to compute all the symmetry classes of any $\SO(3)$ or $\OO(3)$ linear representation. This method  relies on a binary operator between sets of conjugacy classes of closed subgroups, called the \emph{clips}. We compute explicit tables for this operation which allows to solve definitively the problem.
\end{abstract}

\maketitle


\begin{scriptsize}
  \setcounter{tocdepth}{2}
  \tableofcontents
\end{scriptsize}

\section{Introduction}

The problem of finding the \emph{symmetry classes} (also called \emph{isotropy classes}) of a given Lie group linear representation is a difficult task in general, even for a compact group, where their number is known to be finite~\cite{Mos1957,Man1962}.

It is only in 1996 that Forte--Vianello~\cite{FV1996} were able to define clearly the \emph{symmetry classes} of tensor spaces. Such tensor spaces, with natural $\OO(3)$ and $\SO(3)$ representations, appear in continuum mechanics \emph{via} linear constitutive laws. Thanks to this clarification, Forte--Vianello obtained for the first time the 8 symmetry classes of the $\SO(3)$ reducible representation on the space of \emph{Elasticity tensors}. One goal was to clarify and correct all the attempts already done to model the notion of \emph{symmetry} in continuum mechanics, as initiated by Curie~\cite{Cur1894}, but strongly influenced by crystallography. The results were contradictory - some authors announced nine different elasticity anisotropies~\cite{LOV2013} while others announced 10 of them~\cite{Sch1989,Jag1955,Fed2013,Gur1973}. Following Forte--Vianello, similar results were obtained in piezoelectricity~\cite{Wel2004}, photoelasticity~\cite{FV1997} and flexoelasticity~\cite{LQH2011}.

Besides these results on symmetry classes in continuum mechanics, the subject has also been active in the Mathematical community, especially due to its importance in Bifurcation theory. For instance, Michel in~\cite{Mic1980}, obtained the isotropy classes for \emph{irreducible} $\SO(3)$ representations. These results were confirmed by Ihrig and Golubitsky~\cite{IG1984} and completed by the symmetry classes for $\OO(3)$. Later, they were corrected by Chossat \& all in~\cite{CLRM1991}. Thereafter, Chossat--Guyard~\cite{CG1994} calculated the symmetry classes of a direct sum of \emph{two irreducible representations of $\SO(3)$}.

In this paper, we propose an algorithm -- already mentioned in~\cite{OA2013,OA2014,Oli2014} -- to obtain the finite set of symmetry classes for any $\OO(3)$ or $\SO(3)$ representation. Such an algorithm uses a binary operation defined over the set of conjugacy classes of a given group $G$ and that we decided to call the \emph{clips operation}. This operation was almost formulated in~\cite{CG1994}, but with no specific name, and only computed for $\SO(3)$ closed subgroups. As mentioned in~\cite{CG1996}, the clips operation allows to compute the set of \emph{symmetry classes} $\mathfrak{I}(V)$ of a direct sum $V=V_{1}\oplus V_{2}$ of linear representations of a group $G$, if we know the symmetry classes for each individual representations $\mathfrak{I}(V_{1})$ and $\mathfrak{I}(V_{2})$.

We compute clips tables for all conjugacy classes of closed subgroups of $\OO(3)$ and $\SO(3)$. The clips table for $\SO(3)$ was obtained in~\cite{CG1994}, but we give here more detail on the proof and extend the calculation to conjugacy classes of closed $\OO(3)$ subgroups. These results allow to obtain, in a finite step process, the set of symmetry classes for any \emph{reducible} $\OO(3)$ or $\SO(3)$-representation, so that for instance we directly obtain the 16 symmetry classes of \emph{Piezoelectric tensors}.

The subject being particularly important for applications to tensorial properties of any order, the necessity to convince of the correctness of the Clips tables require a complete/correct full article with sound proofs. Present article is therefore intended to be a final point to the theoretical problem and to the effective calculation of symmetry classes. Of course, we try to show the direct interest in the mechanics of continuous media (for this references to~\cite{OA2013,OA2014} are important), but we have no other choice than to insist on the mathematical formulation of the problem. 

The paper is organized as follow. In~\autoref{sec:clips-theory}, which is close to~\cite[Section 2.1]{CG1996}, the theory of \emph{clips} is introduced for a general group $G$ and applied in the context of symmetry classes where it is shown that isotropy classes of a direct sum corresponds to the clips of their respective isotropy classes. In~\autoref{sec:closed-subgroups}, we recall classical results on the classification of closed subgroups of $\SO(3)$ and $\OO(3)$ up to conjugacy. Models for irreducible representations of $\OO(3)$ and $\SO(3)$ and their symmetry classes are recalled in~\autoref{sec:irreducible-representations}. We then provide in~\autoref{subsec:Mechanical_Prop} some applications to tensorial mechanical properties, as the non classical example of Cosserat elasticity. The clips tables for $\SO(3)$ and $\OO(3)$ are presented in~\autoref{sec:clips-tables}. The details and proofs of how to obtain these tables are provided in~\autoref{sec:proofs-SO3} and~\autoref{sec:proofs-O3}.

\section{A general theory of clips}
\label{sec:clips-theory}

Given a group $G$ and a subgroup $H$ of $G$, the conjugacy class of $H$
\begin{equation*}
  [H] := \set{gHg^{-1},\quad g\in G}
\end{equation*}
is a subset of $\mathcal{P}(G)$. We define $\mathrm{Conj}(G)$ to be the set of all conjugacy classes of a given group $G$:
\begin{equation*}
  \mathrm{Conj}(G) := \set{[H],\quad H\subset G}.
\end{equation*}
Recall that, on $\mathrm{Conj}(G)$, there is a pre-order relation induced by inclusion. It is defined as follows:
\begin{equation*}
  [H_{1}] \preceq [H_{2}] \quad \text{if $H_{1}$ is conjugate to a subgroup of $H_{2}$}.
\end{equation*}
When restricted to the \emph{closed subgroups} of a topological compact group, this pre-order relation becomes a \emph{partial order}~\cite[Proposition 1.9]{Bre1972} and defines the \emph{poset} (partial ordered set) of conjugacy classes of \emph{closed subgroups} of $G$.

We now define a binary operation called the \emph{clips operation} on the set $\mathrm{Conj}(G)$.

\begin{defn}
  Given two conjugacy classes $[H_{1}]$ and $[H_{2}]$ of a group $G$, we define their \emph{clips} as the following subset of conjugacy classes:
  \begin{equation*}
    [H_{1}] \circledcirc [H_{2}] := \set{[H_{1} \cap gH_{2}g^{-1}],\quad g \in G}.
  \end{equation*}
  This definition immediately extends to two families (finite or infinite) $\mathcal{F}_{1}$ and $\mathcal{F}_{2}$ of conjugacy classes:
  \begin{equation*}
    \mathcal{F}_{1} \circledcirc \mathcal{F}_{2} := \bigcup_{[H_{i}] \in \mathcal{F}_{i}} [H_{1}] \circledcirc [H_{2}].
  \end{equation*}
\end{defn}

\begin{rem}
The clips operation was already introduced, with no specific name, in~\cite{CG1994}, the notation being $\mathbf{P}(H_1,H_2)$. In this article, the author only focus on the $\SO(3)$ case, with no meaning to deal with a general theory.
\end{rem}

This clips operation defined thus a binary operation on the set $\mathcal{P}(\mathrm{Conj}(G))$ which is \emph{associative} and \emph{commutative}. We have moreover
\begin{equation*}
  [\triv]\circledcirc [H] = \set{[\triv] } \text{ and } [G]\circledcirc [H] = \set{[H]},
\end{equation*}
for every conjugacy class $[H]$, where $\triv:=\set{e}$ and $e$ is the identity element of $G$.

Consider now a linear representation $(V,\rho)$ of the group $G$. Given $\vv\in V$, its \emph{isotropy group} (or \emph{symmetry group}) is defined as
\begin{equation*}
  G_{\vv} := \set{g\in G,\quad g\cdot\vv=\vv}
\end{equation*}
and its \emph{isotropy class} is the conjugacy class $[G_{\vv}]$ of its isotropy group. The \emph{isotropy classes} (or \emph{orbit types}) of the representation $V$ is the family of all isotropy classes of vectors $\vv$ in $V$:
\begin{equation*}
  \mathfrak{I}(V) := \set{[G_{\vv}]; \; \vv \in V}.
\end{equation*}
The central observation is that the isotropy classes of a direct sum of representations is obtained by the clips of their respective isotropy classes.

\begin{lem}\label{lem:Clips_Direct_Sum}
  Let $V_{1}$ and $V_{2}$ be two linear representations of $G$. Then
  \begin{equation*}
    \mathfrak{I}(V_{1}\oplus V_{2})=\mathfrak{I}(V_{1})\circledcirc \mathfrak{I}(V_{2}).
  \end{equation*}
\end{lem}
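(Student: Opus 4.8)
The plan is to reduce everything to the single elementary observation that, for the diagonal action $g\cdot(\vv_1,\vv_2)=(g\cdot\vv_1,g\cdot\vv_2)$ on $V_1\oplus V_2$, the isotropy group of a pair is the intersection of the isotropy groups of its components: a group element fixes $(\vv_1,\vv_2)$ if and only if it fixes both $\vv_1$ and $\vv_2$, so that
\begin{equation*}
  G_{(\vv_1,\vv_2)}=G_{\vv_1}\cap G_{\vv_2}.
\end{equation*}
The second ingredient I will need is the conjugation formula $G_{g\cdot\vv}=gG_{\vv}g^{-1}$, which follows at once by writing $h\cdot(g\cdot\vv)=g\cdot\vv$ precisely when $g^{-1}hg\in G_{\vv}$.

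With these two facts in hand, I would prove the two inclusions separately. For $\mathfrak{I}(V_1\oplus V_2)\subseteq \mathfrak{I}(V_1)\circledcirc \mathfrak{I}(V_2)$, I take any isotropy class $[G_{(\vv_1,\vv_2)}]$, rewrite it as $[G_{\vv_1}\cap G_{\vv_2}]$ using the first observation, and note that, setting $H_1:=G_{\vv_1}$ and $H_2:=G_{\vv_2}$, this is exactly the member of $[H_1]\circledcirc[H_2]$ obtained for the choice $g=e$. Since $[H_1]\in\mathfrak{I}(V_1)$ and $[H_2]\in\mathfrak{I}(V_2)$ by construction, the class lies in the clips of the two families.

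For the reverse inclusion, I start from an arbitrary element $[H_1\cap gH_2g^{-1}]$ of $\mathfrak{I}(V_1)\circledcirc\mathfrak{I}(V_2)$, pick vectors $\vv_1\in V_1$ and $\vv_2\in V_2$ realizing $H_1=G_{\vv_1}$ and $H_2=G_{\vv_2}$, and apply the conjugation formula to rewrite $gH_2g^{-1}=G_{g\cdot\vv_2}$. The intersection then becomes $G_{\vv_1}\cap G_{g\cdot\vv_2}=G_{(\vv_1,\,g\cdot\vv_2)}$ by the first observation, which exhibits the given class as a genuine isotropy class of $V_1\oplus V_2$.

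The argument is essentially formal, so there is no serious obstacle; the one step that genuinely carries content is the realization of the conjugated subgroup $gH_2g^{-1}$ as the honest isotropy group $G_{g\cdot\vv_2}$ of the translated vector. This is exactly what lets an arbitrary element of the clips be matched with a vector of the direct sum rather than merely with an abstract intersection of subgroups, and once it is available both inclusions close and the stated equality follows.
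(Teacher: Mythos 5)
Your proposal is correct and follows essentially the same route as the paper's proof: both directions rest on the identity $G_{(\vv_1,\vv_2)}=G_{\vv_1}\cap G_{\vv_2}$, and the reverse inclusion is closed by replacing $\vv_2$ with $g\cdot\vv_2$ so that $gH_2g^{-1}=G_{g\cdot\vv_2}$. You merely make explicit the conjugation formula that the paper leaves implicit in its ``as before''; no substantive difference.
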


\begin{proof}
  Let $[G_{\vv}]$ be some isotropy class for $\mathfrak{I}(V_{1}\oplus V_{2})$ and write $\vv=\vv_{1}+\vv_{2}$ where $\vv_{i}\in V_{i}$. Note first that $G_{\vv_{1}}\cap G_{\vv_{2}}\subset G_{\vv}$. Conversely given $g\in G_{\vv}$, we get
  \begin{equation*}
    g\cdot \vv=g\cdot \vv_{1}+g\cdot \vv_{2}=\vv,\quad g\cdot \vv_{i}\in V_{i},
  \end{equation*}
  and thus $g\cdot \vv_{i}= \vv_{i}$. This shows that $G_{\vv} = G_{\vv_{1}}\cap G_{\vv_{2}}$ and therefore that
  \begin{equation*}
    \mathfrak{I}(V_{1}\oplus V_{2})\subset \mathfrak{I}(V_{1})\circledcirc \mathfrak{I}(V_{2}).
  \end{equation*}
  Conversely, let $[H]=[H_{1}\cap gH_{2}g^{-1}]$ in $\mathfrak{I}(V_{1})\circledcirc \mathfrak{I}(V_{2})$ where $H_{i} = G_{\vv_{i}}$ for some vectors $\vv_{i} \in V_{i}$. Then, if we set
  \begin{equation*}
    \vv=\vv_{1}+g\cdot \vv_{2},
  \end{equation*}
  we have $G_{\vv}=H_{1}\cap gH_{2}g^{-1}$, as before, which shows that
  \begin{equation*}
    [H_{1}\cap gH_{2}g^{-1}] \in \mathfrak{I}(V_{1}\oplus V_{2})
  \end{equation*}
  and achieves the proof.
\end{proof}

Using this lemma, we deduce a general algorithm to obtain the isotropy classes $\mathfrak{I}(V)$ of a finite dimensional representation of a reductive algebraic group $G$, provided we know:
\begin{enumerate}
  \item a decomposition $V=\bigoplus_{i} W_{i}$ into irreducible representations $W_{i}$.
  \item the isotropy classes $\mathfrak{I}(W_{i})$ for the \emph{irreducible representations} $W_{i}$;
  \item the tables of \emph{clips operations} $[H_{1}]\circledcirc [H_{2}]$ between conjugacy classes of closed subgroups $[H_{i}]$ of $G$.
\end{enumerate}

In the sequel of this paper, we will apply successfully this program to the linear representations of $\SO(3)$ and $\OO(3)$.

\section{Closed subgroups of $\OO(3)$}
\label{sec:closed-subgroups}

Every closed subgroup of $\SO(3)$ is conjugate to one of the following list~\cite{GSS1988}
\begin{equation*}
  \SO(3),\, \OO(2),\, \SO(2),\, \DD_{n} (n \ge 2),\, \ZZ_{n} (n \ge 2),\, \tetra,\, \octa,\, \ico,\, \text{and}\, \triv
\end{equation*}
where:
\begin{itemize}
  \item $\OO(2)$ is the subgroup generated by all the rotations around the $z$-axis and the order 2 rotation $r : (x,y,z)\mapsto (x,-y,-z)$ around the $x$-axis;
  \item $\SO(2)$ is the subgroup of all the rotations around the $z$-axis;
  \item $\ZZ_{n}$ is the unique cyclic subgroup of order $n$ of $\SO(2)$ ($\ZZ_{1} = \set{\Idd}$);
  \item $\DD_{n}$ is the \emph{dihedral} group. It is generated by $\ZZ_{n}$ and $r :(x,y,z)\mapsto (x,-y,-z)$ ($\DD_{1} = \set{\Idd}$);
  \item $\tetra$ is the \emph{tetrahedral} group, the (orientation-preserving) symmetry group of the tetrahedron $\mathcal{T}_{0}$ defined in~\autoref{fig:cube0}. It has order 12;
  \item $\octa$ is the \emph{octahedral} group, the (orientation-preserving) symmetry group of the cube $\mathcal{C}_{0}$ defined in~\autoref{fig:cube0}. It has order 24;
  \item $\ico$ is the \emph{icosahedral} group, the (orientation-preserving) symmetry group of the dodecahedron $\mathcal{D}_{0}$ in~\autoref{fig:dode}. It has order 60;
  \item $\triv$ is the trivial subgroup, containing only the unit element.
\end{itemize}

The \emph{poset} of conjugacy classes of closed subgroups of $\SO(3)$ is completely described by the following inclusion of subgroups~\cite{GSS1988}:
\begin{align*}
    & \ZZ_{n} \subset \DD_{n} \subset \OO(2) \qquad (n \ge 2);                                          \\
    & \ZZ_{n} \subset \ZZ_{m} \text{ and } \DD_{n} \subset \DD_{m}, \qquad (\text{if $n$ divides $m$}); \\
    & \ZZ_{n} \subset \SO(2) \subset \OO(2) \qquad (n \ge 2);
\end{align*}
completed by $[\ZZ_{2}]\preceq [\DD_{n}]$ ($n\geq 2$) and by the arrows in~\autoref{fig:SO3-lattice} (note that an arrow between the classes $[H_{1}]$ and $[H_{2}]$ means that $[H_{1}]\preceq [H_{2}]$), taking account of the exceptional subgroups $\octa, \tetra, \ico$ .

\begin{figure}[h!]
  \centering
  \includegraphics[scale=0.8]{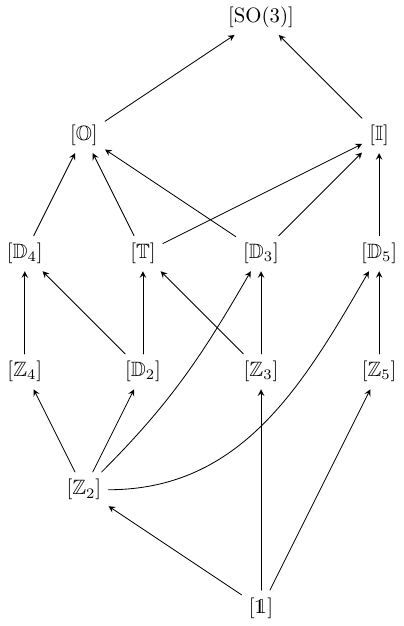}
  \caption{Exceptional conjugacy classes of closed $\SO(3)$ subgroups}
  \label{fig:SO3-lattice}
\end{figure}

Classification of $\OO(3)$-closed subgroups is more involving~\cite{IG1984,Ste1994} and has been described using \emph{three types of subgroups}. Given a closed subgroup $\Gamma$ of $\OO(3)$ this classification runs as follows.
\begin{description}
  \item[Type I] A subgroup $\Gamma$ is of type I if it is a subgroup of $\SO(3)$;
  \item[Type II] A subgroup $\Gamma$ is of type II if $-\Idd\in \Gamma$. In that case, $\Gamma$ is generated by some subgroup $K$ of $\SO(3)$ and $-\Idd$	;
  \item[Type III] A subgroup $\Gamma$ is of type III if $-\Idd \notin \Gamma$ and $\Gamma$ is not a subgroup of $\SO(3)$.
\end{description}

The description of type III subgroups requires more details. We will denote by $\QQ(\vv;\theta)\in \SO(3)$ the rotation around $\vv\in \RR^3$ with angle $\theta \in \interval[open right]{0}{2\pi}$ and by $\sigma_\vv\in \OO(3)$, the reflection through the plane normal to $\vv$. Finally, we fix an arbitrary orthonormal frame $(\ii,\jj,\kk)$, and we introduce the following definitions.

\begin{itemize}
  \item $\ZZ^{-}_{2}$ is the order $2$ reflection group generated by $\sigma_\ii$ (where $\ZZ_{1}^- = \set{\triv}$);
  \item $\ZZ^{-}_{2n}$ ($n\geq 2$) is the group of order $2n$, generated by $\ZZ_{n}$ and $-\QQ\left(\kk;\displaystyle{\frac{\pi}{n}}\right)$ (see~\eqref{eq:Z2nMoins});
  \item $\DD_{2n}^h$ ($n\geq 2$) is the group of order $4n$ generated by $\DD_{n}$ and $-\QQ\left(\kk;\displaystyle{\frac{\pi}{n}}\right)$ (see~\eqref{eq:D2nh});
  \item $\DD_{n}^{v}$ ($n\geq 2$) is the group of order $2n$ generated by $\ZZ_{n}$ and $\sigma_\ii$ (where $\DD_{1}^{v} = \set{\triv}$);
  \item $\OO(2)^{-}$ is generated by $\SO(2)$ and $\sigma_\ii$.
\end{itemize}

These planar subgroups are completed by the subgroup $\octa^{-}$ which is of order $24$ (see \autoref{subsubsec:OMoins} and~\eqref{eq:omoins} for details).

The \emph{poset} of conjugacy classes of closed subgroups of $\OO(3)$ is given in~\autoref{fig:cubic-sub-lattice} for $\octa^{-}$ subgroups and in~\autoref{fig:O3-lattice} for $\OO(3)$ subgroups.

\begin{figure}[h]
  \centering
  \includegraphics[scale=0.8]{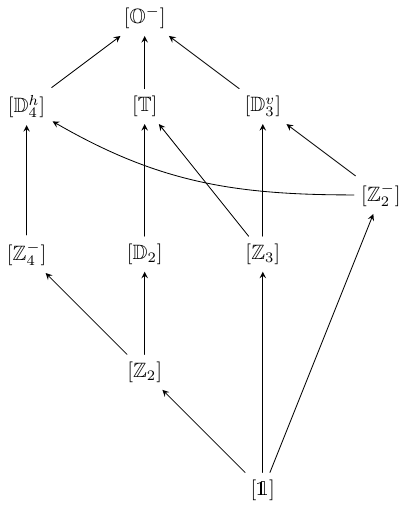}
  \caption{Poset of closed $\octa^{-}$ subgroups}
  \label{fig:cubic-sub-lattice}
\end{figure}

\begin{figure}[h]
  \centering
  \includegraphics[scale=0.8]{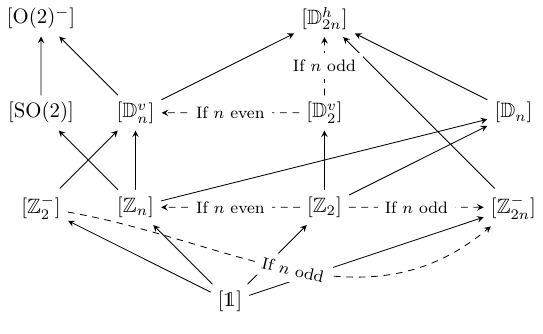}
  \caption{Poset of closed $\OO(3)$ subgroups}
  \label{fig:O3-lattice}
\end{figure}

\section{Symmetry classes for irreducible representations}
\label{sec:irreducible-representations}

Let $\Pn{n}$ be the space of \emph{homogeneous polynomials} of degree $n$ on $\RR^{3}$. We have two natural representations of $\OO(3)$ on $\Pn{n}$. The first one, noted $\rho_{n}$ is given by
\begin{equation*}
  [\rho_{n}(\bp)](\xx):=\bp(g^{-1}\xx),\quad g\in G,\quad \xx\in \RR^{3}
\end{equation*}
whereas the second one, noted $\rho^{*}_{n}$, is given by
\begin{equation*}
  [\rho^{*}_{n}(\bp)](\xx):=\det(g)\bp(g^{-1}\xx),\quad g\in G,\quad \xx\in \RR^{3}.
\end{equation*}
Note that both of them induce the same representation $\rho_{n}$ of $\SO(3)$.

Let $\Hn{n}\subset \Pn{n}$ be the subspace of \emph{homogeneous harmonic polynomials} of degree $n$ (polynomials with null Laplacian). It is a classical fact~\cite{GSS1988} that $(\Hn{n},\rho_{n})$ and $(\Hn{n},\rho_{n}^{*})$ ($n \ge 0$) are irreducible $\OO(3)$-representations, and each irreducible $\OO(3)$-representation is isomorphic to one of them. Models for irreducible representations of $\SO(3)$ reduce to $(\Hn{n},\rho_{n})$ ($n \ge 0$).

\begin{rem}
  Other classical models for $\OO(3)$ and $\SO(3)$ irreducible representations, used in mechanics~\cite{FV1996}, are given by spaces of \emph{harmonic tensors} (\textit{i.e.} totally symmetric traceless tensors).
\end{rem}

The isotropy classes for irreducible representations of $\SO(3)$ was first obtained by Michel~\cite{Mic1980}. Same results were then obtained and completed in the $\OO(3)$ case by Ihrig-Golubistky~\cite{IG1984} and then by Chossat and al~\cite{CLRM1991}.

\begin{thm}\label{thm:SO3_Irr_Isot}
  The isotropy classes for the $\SO(3)$-representation $(\Hn{n},\rho_{n})$ are:
  \begin{enumerate}
    \item $[\triv]$ for $n\geq 3$;
    \item $[\ZZ_k]$ for $2 \le k \le n$ if $n$ is odd and $2 \le k \le n/2$ if $n$ is even;
    \item $[\DD_k]$ for $2 \le k \le n$;
    \item $[\tetra]$ for $n=3,6$, $7$ or $n\geq 9$;\label{list:SO3IrredTetra}
    \item $[\octa]$ for $n\neq 1,2,3,5,7,11$;\label{list:SO3IrredOcta}
    \item $[\ico]$ for $n=6,10,12,15,16,18$ or $n\geq 20$ and $n\neq 23,29$;
    \item $[\SO(2)]$ for $n$ odd;
    \item $[\OO(2)]$ for $n$ even;
    \item $[\SO(3)]$ for any $n$.
  \end{enumerate}
\end{thm}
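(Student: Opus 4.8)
The plan is to decide, for each conjugacy class $[H]$ of the closed subgroups of $\SO(3)$ listed in~\autoref{sec:closed-subgroups}, whether $H$ actually occurs as an isotropy group $\SO(3)_{\vv}$ for some $\vv\in\Hn{n}$. The basic invariant is the dimension of the fixed-point space $\Hn{n}^{H}$, which I would compute from the character of $(\Hn{n},\rho_n)$. Since $\Hn{n}$ is the $(2n+1)$-dimensional irreducible $\SO(3)$-representation, a rotation of angle $\theta$ acts with trace $\chi_n(\theta)=\sum_{m=-n}^{n}e^{im\theta}=\frac{\sin((2n+1)\theta/2)}{\sin(\theta/2)}$, so that $\dim\Hn{n}^{H}=\frac{1}{\abs{H}}\sum_{h\in H}\chi_n(\theta_h)$ for a finite subgroup $H$, the sum being replaced by Haar integration in the continuous case.

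Carrying out these character sums is direct for the infinite series of subgroups. Orthogonality of characters on $\ZZ_k$ gives $\dim\Hn{n}^{\ZZ_k}=2\lfloor n/k\rfloor+1$ (the invariants are spanned by the $Y_n^{m}$ with $k\mid m$); since the order-two rotations of $\DD_k$ have angle $\pi$ with $\chi_n(\pi)=(-1)^n$, one gets $\dim\Hn{n}^{\DD_k}=\lfloor n/k\rfloor+\tfrac{1}{2}(1+(-1)^n)$. For the continuous subgroups the invariants are the zonal harmonics: $\dim\Hn{n}^{\SO(2)}=1$ (spanned by $Y_n^{0}$), while $\dim\Hn{n}^{\OO(2)}$ is $1$ for $n$ even and $0$ for $n$ odd, the deciding factor being that the order-two rotation $r$ sends $Y_n^0$ to $(-1)^nY_n^0$. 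For the exceptional groups I would read $\dim\Hn{n}^{H}$ off the Molien series, obtained from $\sum_n(\dim\Pn{n}^{H})t^n=\frac{1}{\abs{H}}\sum_{h\in H}\det(I-t\,h)^{-1}$ together with $\dim\Hn{n}^{H}=\dim\Pn{n}^{H}-\dim\Pn{n-2}^{H}$.

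The passage from dimensions to isotropy classes rests on the criterion that $H$ is an isotropy group iff $\Hn{n}^{H}$ is not covered by the fixed spaces $\Hn{n}^{K}$ of the strictly larger closed subgroups $K$. When the normalizer quotient $N(H)/H$ is finite (which holds for $\DD_k$, for $\tetra,\octa,\ico,\OO(2)$, and for $\SO(3)$), only finitely many such $\Hn{n}^{K}$ occur and, a real vector space not being a finite union of proper subspaces, the criterion reduces to the strict inequalities $\dim\Hn{n}^{H}>\dim\Hn{n}^{K}$ over the minimal overgroups $K$. Thus $[\octa]$ and $[\ico]$ occur exactly when $\dim\Hn{n}^{\octa}\ge1$, respectively $\dim\Hn{n}^{\ico}\ge1$ (their only overgroup being $\SO(3)$, with $\dim\Hn{n}^{\SO(3)}=0$ for $n\ge1$), while $[\tetra]$ occurs exactly when $\dim\Hn{n}^{\tetra}$ strictly exceeds both $\dim\Hn{n}^{\octa}$ and $\dim\Hn{n}^{\ico}$; likewise $[\OO(2)]$ ($n$ even) and $[\SO(2)]$ ($n$ odd) follow from the zonal count, and $[\SO(3)]$ is always present as the class of $\vv=0$.

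The genuinely delicate class is $\ZZ_k$, where $N(\ZZ_k)=\OO(2)$ so that $N(\ZZ_k)/\ZZ_k$ is infinite and the naive dimension comparison fails: a one-parameter family of conjugate overgroups can cover $\Hn{n}^{\ZZ_k}$. Here I would analyse the residual action of $\OO(2)\cong N(\ZZ_k)/\ZZ_k$ on $\Hn{n}^{\ZZ_k}=\bigoplus_{\abs{jk}\le n}\CC\,Y_n^{jk}$, which splits into charges $j$ with $-\lfloor n/k\rfloor\le j\le\lfloor n/k\rfloor$, the charge-zero line carrying the sign $(-1)^n$ under the order-two elements. Elementary $\OO(2)$-representation theory then shows that a generic vector has trivial isotropy — so that $[\ZZ_k]$ is realized — precisely when two distinct nonzero charges are present ($\lfloor n/k\rfloor\ge2$, i.e. $k\le n/2$) or, for $n$ odd, when a single nonzero charge coexists with the sign-carrying charge-zero line ($\lfloor n/k\rfloor\ge1$, i.e. $k\le n$); this yields the asymmetric ranges $2\le k\le n/2$ for $n$ even and $2\le k\le n$ for $n$ odd. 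The \emph{main obstacle} is twofold: first, justifying the reduction to $N(H)/H$, since one must still verify that a generic $\vv$ with trivial residual isotropy is not fixed by elements outside $N(H)$, which requires ruling out the finitely many conjugates of larger groups through the dimension estimates; and second, extracting the exact arithmetic conditions for $\tetra,\octa,\ico$ from their Molien series, which is where the irregular lists of degrees (the omission of $n=1,2,3,5,7,11$ for $\octa$, and the more intricate list for $\ico$) must be established by a careful analysis of the invariant generating functions. Finally, $[\triv]$ for $n\ge3$ amounts to showing that the principal isotropy group is trivial, which follows once the above guarantees that no nontrivial subgroup fixes a generic harmonic and that the generic orbit has full dimension $3$.
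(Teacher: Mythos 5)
First, note that the paper does not actually prove Theorem~\ref{thm:SO3_Irr_Isot}: it quotes the result from Michel, Ihrig--Golubitsky and Chossat~et~al., adding only a remark that corrects the $[\tetra]$ entry at $n=3$. So there is no internal proof to compare against; your proposal has to be judged against the strategy of those references, and indeed it reproduces it: fixed-point dimensions $\dim\Hn{n}^{H}$ via the character $\chi_n$ (your formulas $2\lfloor n/k\rfloor+1$ for $\ZZ_k$, $\lfloor n/k\rfloor+\tfrac12(1+(-1)^n)$ for $\DD_k$, the zonal counts for $\SO(2)$ and $\OO(2)$, and Molien series for $\tetra,\octa,\ico$ are all correct), followed by the covering criterion ``$[H]$ occurs iff $\Hn{n}^{H}$ is not covered by the fixed spaces of the strict overgroups,'' with the maximality of $\octa$ and $\ico$ giving the clean criteria $\dim\Hn{n}^{\octa}\ge 1$, $\dim\Hn{n}^{\ico}\ge 1$, and the residual $N(\ZZ_k)/\ZZ_k\cong\OO(2)$ action correctly producing the asymmetric ranges $k\le n$ ($n$ odd) versus $k\le n/2$ ($n$ even). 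As an outline this is sound and is the right proof.

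There are, however, two places where what you wrote would not yet go through as stated. The first is your claim that after the normalizer analysis one only has to rule out ``the finitely many conjugates of larger groups'': this is false for small $k$. A fixed $\ZZ_2$ is contained in a continuum of conjugates of $\DD_m$ (as a secondary-axis subgroup), of $\tetra$, of $\octa$, etc., and a fixed $\ZZ_3$ lies in a circle's worth of conjugates of $\tetra$; so the union to be avoided is an \emph{infinite} union of proper subspaces, and the ``a vector space is not a finite union of proper subspaces'' argument does not apply. What is needed there is the quantitative estimate of Ihrig--Golubitsky type, namely that the union $\bigcup_{g}\Hn{n}^{gKg^{-1}}$ over all conjugates of $K$ containing $H$ is the image of a manifold of dimension at most $\dim\Hn{n}^{K}+\dim\bigl(\SO(3)/N(K)\bigr)$, which must be checked to be strictly less than $\dim\Hn{n}^{\ZZ_k}=2\lfloor n/k\rfloor+1$ in each case; you gesture at ``dimension estimates'' but this inequality is the actual content and has to be verified for $k=2,3,4,5$ against $\DD_m$, $\tetra$, $\octa$, $\ico$, $\OO(2)$. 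The second gap is that the arithmetic producing the exceptional lists --- $[\tetra]$ for $n=3,6,7$ or $n\ge 9$ (in particular the inclusion of $n=3$ and exclusion of $n=8$, which requires comparing $\dim\Hn{n}^{\tetra}$ with $\dim\Hn{n}^{\octa}$ and $\dim\Hn{n}^{\ico}$, not merely checking nonvanishing), $[\octa]$ for $n\ne 1,2,3,5,7,11$, and the $\ico$ list with its exceptions $23,29$ --- is exactly the part of the theorem that has historically been gotten wrong (as the paper's own remark about~\cite{IG1984} shows), and your proposal defers it entirely to ``a careful analysis of the invariant generating functions.'' Until those two items are carried out, the proposal is a correct plan rather than a proof.
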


\begin{rem}
  The list in Theorem~\ref{thm:SO3_Irr_Isot} is similar to the list in~\cite[Table A1]{Mic1980} and~\cite[Table A4]{CLRM1991}. In~\cite[Theorem 6.6]{IG1984} (for $\SO(3)$ irreducible representations) :
  \begin{itemize}
    \item $[\tetra]$ is an isotropy class for $n=6,7$ and $n\geq 9$;
    \item $[\octa]$ is an isotropy class for $n\neq 1,2,5,7,11$.
  \end{itemize}
  Such lists are different from \eqref{list:SO3IrredTetra} and \eqref{list:SO3IrredOcta} in our Theorem~\ref{thm:SO3_Irr_Isot}. But according to~\cite[Proposition 3.7]{IG1984}, $[\tetra]$ is a maximum isotropy class for $n=3$. We have thus corrected this error in Theorem~\ref{thm:SO3_Irr_Isot}.
\end{rem}

\begin{thm}\label{thm:O3_Irr_Isot}
  The isotropy classes for the $\OO(3)$-representations $(\Hn{n},\rho_{n})$ (for $n$ odd) and $(\Hn{n},\rho_{n}^{*})$ (for $n$ even) are:
  \begin{enumerate}
    \item $[\triv]$ for $n\geq 3$;
    \item $[\ZZ_k]$ for $2\leq k\leq n/2$;
    \item $[\ZZ_{2k}^{-}]$ for $k\leq \dfrac{n}{3}$;
    \item $[\DD_k]$ for $2 \le k \le n/2$ if $n$ is odd and for  $2 \le k \le n$ if $n$ is even.
    \item $[\DD_k^{v}]$ for $2 \le k \le n$ if $n$ is odd and $2 \le k \le n/2$ if $n$ is even;
    \item $[\DD_{2k}^h]$ for $2 \le k\leq n$, except $\DD_4^h$ for $n=3$;
    \item $[\tetra]$ for $n\neq 1,2,3,5,7,8,11$;\label{list:O3IrredTetra}
    \item $[\octa]$ for $n\neq 1,2,3,5,7,11$;
    \item $[\octa^{-}]$ for $n\neq 1,2,4,5,8$;
    \item $[\ico]$ for $n=6,10,12,15,16,18$ or $n\geq 20$ and $n\neq 23,29$;
    \item $[\OO(2)]$ when $n$ is even ;
    \item $[\OO(2)^{-}]$ when $n$ is odd.
  \end{enumerate}
\end{thm}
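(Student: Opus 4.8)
The plan is to reduce the $\OO(3)$ computation to the already-established $\SO(3)$ result of \autoref{thm:SO3_Irr_Isot}, exploiting the fact that in both cases under consideration the central element $-\Idd$ acts as the scalar $-1$. Indeed, for a homogeneous polynomial of degree $n$ one has $\bp(-\xx)=(-1)^{n}\bp(\xx)$, so $\rho_{n}(-\Idd)=(-1)^{n}\Idd$ and $\rho^{*}_{n}(-\Idd)=(-1)^{n+1}\Idd$; hence for $(\Hn{n},\rho_{n})$ with $n$ odd and for $(\Hn{n},\rho^{*}_{n})$ with $n$ even one gets $\rho(-\Idd)=-\Idd$ in every case. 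The immediate consequence is that $-\Idd$ fixes no nonzero vector, so no isotropy group contains $-\Idd$: every isotropy class is of type I or type III, and all type II classes are excluded from the outset.

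First I would set up the type I / type III dichotomy at the level of a single vector. Since $\OO(3)=\SO(3)\sqcup(-\Idd)\SO(3)$ and $-\Idd$ acts as $-1$, the $\OO(3)$-orbit of $\vv$ is $\SO(3)\cdot\vv\cup\SO(3)\cdot(-\vv)$. Writing $K:=\SO(3)\cap\OO(3)_{\vv}$, a short argument shows that $\OO(3)_{\vv}$ is of type I (equal to $K$) precisely when $-\vv\notin\SO(3)\cdot\vv$, and of type III otherwise; in the latter case the projection $K'$ of $\OO(3)_{\vv}$ to the $\SO(3)$-factor is an index-$2$ overgroup of $K$, realizing one of the type III subgroups $\ZZ_{2k}^{-},\DD_{2k}^{h},\DD_{k}^{v},\OO(2)^{-},\octa^{-}$ through the standard $(K\subset K')$ correspondence. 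Translating this into fixed spaces, for a type III subgroup $\Gamma$ with $\Gamma\cap\SO(3)=K$ and projection $K'$ one obtains the clean formula $V^{\Gamma}=\{\vv\in V^{K}:\rho(s)\vv=-\vv\}$ for any $s\in K'\setminus K$, whence $\dim V^{\Gamma}=\dim V^{K}-\dim V^{K'}$.

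Next I would compute all the needed fixed-space dimensions. For $\SO(3)$-subgroups these are exactly the character counts already underlying \autoref{thm:SO3_Irr_Isot}: for instance $\dim\Hn{n}^{\ZZ_{k}}=2\lfloor n/k\rfloor+1$, and analogous closed formulas hold for $\DD_{k},\tetra,\octa,\ico,\SO(2),\OO(2)$ via the character of the irreducible representation $\Hn{n}$. The type III dimensions then follow from the subtraction formula above. This reduces the determination of the candidate isotropy subgroups — those $H$ with $\dim V^{H}\ge 1$ — to elementary arithmetic in $n$ and produces the stated ranges of $n$ for each class.

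The main obstacle, and where the real work lies, is passing from $\dim V^{H}\ge 1$ to the statement that $[H]$ is \emph{genuinely} an isotropy class. A subgroup $H$ is an isotropy subgroup iff its fixed space is not covered by the fixed spaces of the strictly larger isotropy subgroups; over $\RR$ this amounts to checking, for each minimal overgroup $H'$ in the posets of \autoref{fig:cubic-sub-lattice} and \autoref{fig:O3-lattice}, the strict inequality $\dim V^{H'}<\dim V^{H}$ (with due care for the infinitely many conjugates of $H'$ containing $H$). I would organize this as a downward sweep of the posets starting from the top: any class that is maximal among those with nonzero fixed space is automatically an isotropy class. The delicate points are the small-degree coincidences in which a candidate collapses into a larger class: these account for every exceptional exclusion in the statement — the absence of $\DD_{4}^{h}$ at $n=3$, the exclusions $n\neq 1,2,3,5,7,8,11$ for $[\tetra]$ and $n\neq 1,2,4,5,8$ for $[\octa^{-}]$, and the icosahedral gaps $n\neq 23,29$ — and each of these must be settled by an explicit computation of the relevant fixed-space dimensions in that particular degree.
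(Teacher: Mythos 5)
First, a point of comparison: the paper does not actually prove \autoref{thm:O3_Irr_Isot}. It cites Michel~\cite{Mic1980}, Ihrig--Golubitsky~\cite{IG1984} and Chossat et al.~\cite{CLRM1991} for the lists, and the accompanying remark merely corrects the $[\tetra]$ entry at $n=3$ by cross-checking references. Your plan is therefore not being measured against an argument in the paper but against the method of the cited source~\cite{IG1984}, and in outline it is that method: the observation that $-\Idd$ acts as $-\mathrm{Id}$ (so type II classes are excluded), the reduction of a type III subgroup $\Gamma$ with couple $(L,H)$ to the formula $\dim V^{\Gamma}=\dim V^{L}-\dim V^{H}$, and the character-theoretic computation of the type I fixed-space dimensions are all correct and standard.

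The genuine gap is in your final step. The criterion you state --- $[H]$ is an isotropy class provided $\dim V^{H'}<\dim V^{H}$ for each minimal overgroup $H'$ --- is false, and the parenthetical ``due care for the infinitely many conjugates'' is not a technicality you can defer: it is responsible for several of the ranges in the statement. Concretely, $\dim \Hn{n}^{\ZZ_{k}}=2\lfloor n/k\rfloor+1\geq 3$ for all $k\leq n$, and every minimal overgroup has strictly smaller fixed space, yet item (2) asserts $[\ZZ_{k}]$ is an isotropy class only for $k\leq n/2$; the reason is that for $k>n/2$ the space $V^{\ZZ_{k}}$ is covered by the \emph{uncountable} union of the fixed spaces of the conjugates of $\DD_{k}^{v}$ (and $\ZZ_{2k}^{-}$, etc.) containing $\ZZ_{k}$, exactly as $\Hn{2}^{\ZZ_{2}}$ is covered by the fixed spaces of a one-parameter family of conjugates of $\DD_{2}$. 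A real vector space can be covered by uncountably many proper subspaces, so the correct test (this is the content of~\cite[Section 2]{IG1984}) must compare $\dim V^{H}$ with $\dim V^{H'}$ \emph{plus} the dimension of the family of conjugates of $H'$ containing $H$; your sketch never states or uses this corrected inequality. Separately, the low-degree exceptional exclusions (no $\DD_{4}^{h}$ at $n=3$, the $[\tetra]$, $[\octa^{-}]$ and $[\ico]$ gaps) are deferred to ``explicit computation'' --- but these are precisely the entries on which the published sources disagree and which the paper's remark exists to correct, so until those computations are exhibited the lists in the statement have not been established.
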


\begin{rem}
  The list in Theorem~\ref{thm:O3_Irr_Isot} for $n$ odd is similar to the list in~\cite[Table A4]{CLRM1991}.	
  In~\cite[Theorem 6.8]{IG1984} (for $\OO(3)$ irreducible representations), $[\tetra]$ is an isotropy class for $n\neq 1,2,5,7,8,11$ (which is different from the list \eqref{list:O3IrredTetra} in our Theorem~\ref{thm:O3_Irr_Isot} above). But according to~\cite{GW2002,Wel2004} and~\cite[Table A4]{CLRM1991}, $[\tetra]$ is not an isotropy class in the case $n=3$, and we corrected this error in the list~\eqref{list:O3IrredTetra} of Theorem~\ref{thm:O3_Irr_Isot}.
\end{rem}

\section{Clips tables}
\label{sec:clips-tables}

\subsection{$\SO(3)$ closed subgroups}

The resulting conjugacy classes for the clips operation of closed $\SO(3)$ subgroups are given in~\autoref{tab:SO3-clips}.

The following notations have been used:
\begin{align*}
  d & := \gcd(m,n), & d_{2} & := \gcd(n,2),          & k_{2}  & := 3-d_{2},\\
  d_{3} & := \gcd(n,3), & d_{5} & := \gcd(n,5) \\
  dz    & :=2, \text{ if $m$ and $n$ even}, & dz     & :=1, \text{ otherwise},                \\
  d_4 & := 4, \text{ if } 4 \text{ divide } n, & d_4 & := 1, \text{ otherwise}, \\
  \ZZ_{1}&=\DD_{1}:=\triv.
\end{align*}
 
\begin{small}
  \begin{table}[H]
    \renewcommand{\arraystretch}{1.5}
    \renewcommand\tabcolsep{2pt}
    \begin{tabular}{|c|c|c|c|c|c|c|c|}
      \hline
      $\circledcirc$         & $\left[\ZZ_{n}\right]$         & $\left[\DD_{n}\right]$ & $\left[\tetra\right]$ & $\left[\octa\right]$ & $\left[\ico\right]$ & $\left[\SO(2)\right]$ & $\left[\OO(2)\right]$ \\
      \hline
      $\left[\ZZ_{m}\right]$ & $[\triv],\left[\ZZ_{d}\right]$ &                        &                       &                      &                     &                       &                       \\
      \cline{1-3}
      $\left[\DD_{m}\right]$
      &
      \begin{tabular}{c}
      $[\triv]$ \\
      $\left[ \ZZ_{d_{2}} \right]$ \\
      $\left[ \ZZ_{d} \right]$
    \end{tabular}
    &
    \begin{tabular}{c}
      $[\triv],\left[\ZZ_{2}\right],\left[\DD_{dz}\right]$                \\
      $\left[\ZZ_{d}\right],\left[\DD_{d}\right]$
    \end{tabular}
    & & & & & \\
    \cline{1-4}
    $\left[\tetra\right]$
    &
    \begin{tabular}{c}
      $[\triv]$                  \\
      $\left[\ZZ_{d_{2}}\right]$ \\
      $\left[\ZZ_{d_{3}}\right]$
    \end{tabular}
    &
    \begin{tabular}{c}
      $[\triv],\left[ \ZZ_{2}\right]$                      \\
      $\left[ \ZZ_{d_{3}}\right],\left[\DD_{d_{2}}\right]$
    \end{tabular}
    &
    \begin{tabular}{c}
      $[\triv],\left[ \ZZ_{2}\right]$               \\
      $\left[ \DD_{2}\right],\left[ \ZZ_{3}\right]$ \\
      $\left[ \tetra \right]$
    \end{tabular}
    & & & & \\
    \cline{1-5}
    $\left[\octa\right]$ &
    \begin{tabular}{c}
      $[\triv]$                  \\
      $\left[\ZZ_{d_{2}}\right]$ \\
      $\left[\ZZ_{d_{3}}\right]$ \\
      $\left[\ZZ_{d_4}\right]$
    \end{tabular}
    &
    \begin{tabular}{c}
      $[\triv],\left[\ZZ_{2}\right]$                      \\
      $\left[\ZZ_{d_{3}}\right],\left[\ZZ_{d_4}\right]$   \\
      $\left[\DD_{d_{2}}\right],\left[\DD_{d_{3}}\right]$ \\
      $\left[\DD_{d_4}\right]$
    \end{tabular}
    &
    \begin{tabular}{c}
      $[\triv]$                         \\
      $\left[ \ZZ_{2}\right],[\DD_{2}]$ \\
      $\left[ \ZZ_{3}\right]$           \\
      $\left[ \tetra \right]$
    \end{tabular}
    &
    \begin{tabular}{c}
      $[\triv],\left[ \ZZ_{2}\right]$               \\
      $\left[ \DD_{2}\right],\left[ \ZZ_{3}\right]$ \\
      $\left[ \DD_{3}\right],\left[ \ZZ_4\right]$   \\
      $\left[ \DD_4 \right],\left[ \octa \right]$
    \end{tabular}
    & & & \\
    \cline{1-6}
    $\left[\ico\right]$
    &
    \begin{tabular}{c}
      $[\triv]$                   \\
      $\left[ \ZZ_{d_{2}}\right]$ \\
      $\left[ \ZZ_{d_{3}}\right]$ \\
      $\left[ \ZZ_{d_{5}}\right]$
    \end{tabular}
    &
    \begin{tabular}{c}
      $[\triv],\left[ \ZZ_{2}\right]$                       \\
      $\left[ \ZZ_{d_{3}}\right],\left[ \ZZ_{d_{5}}\right]$ \\
      $\left[ \DD_{d_{2}}\right]$                           \\
      $\left[ \DD_{d_{3}}\right],\left[ \DD_{d_{5}}\right]$
    \end{tabular}
    &
    \begin{tabular}{c}
      $[\triv]$               \\
      $\left[ \ZZ_{2}\right]$ \\
      $\left[ \ZZ_{3}\right]$ \\
      $\left[ \tetra \right]$
    \end{tabular}
    &
    \begin{tabular}{c}
      $[\triv],\left[ \ZZ_{2}\right]$                \\
      $ \left[ \ZZ_{3}\right],\left[ \DD_{3}\right]$ \\
      $\left[ \tetra\right]$
    \end{tabular}
    &
    \begin{tabular}{c}
      $[\triv],\left[ \ZZ_{2}\right]$               \\
      $\left[ \ZZ_{3}\right],\left[ \DD_{3}\right]$ \\
      $\left[ \ZZ_{5}\right],\left[ \DD_{5}\right]$ \\
      $\left[ \ico\right]$
    \end{tabular}
    & & \\
    \cline{1-7}
    $\left[\SO(2)\right]$ &  $[\triv],\left[ \ZZ_{n}\right]$
    &
    \begin{tabular}{c}
      $[\triv]$,
      $\left[ \ZZ_{2}\right]$ \\
      $\left[ \ZZ_{n}\right]$
    \end{tabular}
    &
    \begin{tabular}{c}
      $[\triv]$ ,$\left[\ZZ_{2}\right]$ \\
      $\left[\ZZ_{3}\right]$
    \end{tabular}
    &
    \begin{tabular}{c}
      $[\triv],\left[ \ZZ_{2}\right]$               \\
      $\left[ \ZZ_{3}\right],\left[ \ZZ_{4}\right]$
    \end{tabular}
    &
    \begin{tabular}{c}
      $[\triv],\left[ \ZZ_{2}\right]$               \\
      $\left[ \ZZ_{3}\right],\left[ \ZZ_{5}\right]$
    \end{tabular}
    &  $[\triv],\left[\SO(2)\right]$ & \\
    \cline{1-8}
    $\left[\OO(2)\right]$
    &
    \begin{tabular}{c}
      $[\triv],\left[ \ZZ_{d_{2}}\right]$ \\
      $\left[ \ZZ_{n}\right]$
    \end{tabular}
    &
    \begin{tabular}{c}
      $[\triv],\left[ \ZZ_{2}\right]$               \\
      $\left[ \DD_{k_{2}}\right],\left[ \DD_{n}\right]$
    \end{tabular}
    &
    \begin{tabular}{c}
      $[\triv],\left[ \ZZ_{2}\right]$ \\
      $\left[ \DD_{2}\right]$, $\left[ \ZZ_{3}\right]$
    \end{tabular}
    &
    \begin{tabular}{c}
      $[\triv],\left[ \ZZ_{2}\right]$             \\
      $\left[ \DD_{2}\right]$, $\left[ \DD_{3}\right]$\\
      $\left[ \DD_4\right]$
    \end{tabular}
    &
    \begin{tabular}{c}
      $[\triv],\left[ \ZZ_{2}\right]$               \\
      $\left[ \DD_{2}\right]$,$\left[ \DD_{3}\right]$\\
      $\left[ \DD_{5}\right]$
    \end{tabular}
    &
    \begin{tabular}{c}
      $[\triv],\left[ \ZZ_{2}\right]$ \\
      $\left[ \SO(2)\right]$
    \end{tabular}
    &
    \begin{tabular}{c}
      $\left[ \ZZ_{2}\right],[\DD_{2}]$ \\
      $\left[ \OO(2)\right]$
    \end{tabular} \\
    \hline
    \end{tabular}
    \caption{Clips operations for $\SO(3)$}
    \label{tab:SO3-clips}
  \end{table}
\end{small}

\begin{rem}
  The clips operations $[\tetra]\circledcirc [\tetra]$ and $[\tetra]\circledcirc [\octa]$ were wrong in~\cite{OA2013,Oli2014}, since for instance the isotropy class $[\DD_{2}]$ was omitted.
\end{rem}

\begin{ex}[Isotropy classes for a family of $n$ vectors]
  For one vector, we get
  \begin{equation*}
    \mathfrak{I}(\Hn{1}) = \set{[\SO(2)],[\SO(3)]}.
  \end{equation*}
  From~\autoref{tab:SO3-clips}, we deduce that the isotropy classes for a family of $n$ vectors ($n \ge 2$) is
  \begin{equation*}
    \mathfrak{I}\left(\bigoplus_{k=1}^{n} \Hn{1}\right) = \set{[\triv],[\SO(2)],[\SO(3)]}.
  \end{equation*}
\end{ex}

\begin{ex}[Isotropy classes for a family of $n$ quadratic forms]
  The space of quadratic forms on $\RR^{3}$, $\mathrm{S}_2(\RR^{3})$, decomposes into two irreducible components (\emph{deviatoric} and \emph{spherical} tensors for the mechanicians):
  \begin{equation*}
    \mathrm{S}_2(\RR^{3}) = \Hn{2} \oplus \Hn{0}.
  \end{equation*}
  We get thus
  \begin{equation*}
    \mathfrak{I}(\mathrm{S}_2(\RR^{3})) = \mathfrak{I}(\Hn{2})=\set{[\DD_{2}],[\OO(2)],[\SO(3)]}.
  \end{equation*}
  The useful part of~\autoref{tab:SO3-clips}, for our purpose, reads:
  \begin{center}
    \begin{tabular}{|c|c|c|c|}
      \hline
      $\circledcirc$ & $[\ZZ_{2}]$               & $[\DD_{2}]$                         & $[\OO(2)]$                           \\
      \hline
      $[\ZZ_{2}]$    & $\set{[\triv],[\ZZ_{2}]}$ & $\set{[\triv],[\ZZ_{2}]}$           & $\set{[\triv],[\ZZ_{2}]}$            \\
      \hline
      $[\DD_{2}]$    &                           & $\set{[\triv],[\ZZ_{2}],[\DD_{2}]}$ & $\set{[\triv],[\ZZ_{2}],[\DD_{2}]}$  \\	
      \hline
      $[\OO(2)]$     &                           &                                     & $\set{[\ZZ_{2}],[\DD_{2}],[\OO(2)]}$ \\
      \hline
    \end{tabular}
  \end{center}
  We deduce therefore that the set of isotropy classes for a family of $n$ quadratic forms ($n \ge 2$) is
  \begin{equation*}
    \mathfrak{I}\left( \bigoplus_{k=1}^{n} \mathrm{S}_2(\RR^{3}) \right) = \set{[\triv],[\ZZ_{2}],[\DD_{2}],[\OO(2)],[\SO(3)]}.
  \end{equation*}
\end{ex}

\subsection{$\OO(3)$ closed subgroups}

Let us first consider an $\OO(3)$-representation where $-\Idd$ act as $-\mathrm{Id}$ (meaning that this representation doesn't reduce to some $\SO(3)$ representation). In such a case, only the null vector can be fixed by $-\mathrm{Id}$, and so type II subgroups never appear as isotropy subgroups. In that case, we need only to focus on clips operations between type I and type III subgroups, and then between type III subgroups, since clips operations between type I subgroups have already been considered in~\autoref{tab:SO3-clips}. For type III subgroups as detailed in~\autoref{sec:proofs-O3} we have:

\begin{lem}\label{lem:typeIetIII}
  Let $H_{1}$ be some type III closed subgroup of $\OO(3)$ and $H_{2}$ be some type I closed subgroup of $\OO(3)$. Then we have
  \begin{equation*}
    H_{1}\cap H_{2}=(H_{1}\cap \SO(3))\cap H_{2},
  \end{equation*}
  and for every closed subgroup $H$ of $\SO(3)$, we get:
  \begin{align*}
    [\ZZ_{2}^{-}]\circledcirc [H] & =\set{[\triv]},             & [\ZZ_{2n}^{-}]\circledcirc [H] & =[\ZZ_{n}]\circledcirc [H], \\
    [\DD_{n}^{v}]\circledcirc [H] & =[\ZZ_{n}]\circledcirc [H], & [\DD_{2n}^h]\circledcirc [H]   & =[\DD_{n}]\circledcirc [H], \\
    [\octa^{-}]\circledcirc [H]   & =[\tetra]\circledcirc [H],  & [\OO(2)^{-}]\circledcirc [H]   & =[\SO(2)]\circledcirc [H].
  \end{align*}
\end{lem}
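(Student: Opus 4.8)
The plan is to reduce every clips on the left-hand side to a clips between type I subgroups, which is already tabulated in \autoref{tab:SO3-clips}. The first identity is immediate: since $H_{2}$ is of type I we have $H_{2}\subset \SO(3)$, hence $H_{2}=H_{2}\cap \SO(3)$ and
\begin{equation*}
  H_{1}\cap H_{2}=H_{1}\cap(H_{2}\cap \SO(3))=(H_{1}\cap \SO(3))\cap H_{2}.
\end{equation*}
Setting $K:=H_{1}\cap \SO(3)$, this rewrites each intersection $H_{1}\cap gH_{2}g^{-1}$ as $K\cap gH_{2}g^{-1}$, an intersection of subgroups of $\SO(3)$.

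First I would determine $K=H_{1}\cap \SO(3)$ for each of the six type III subgroups by a determinant/coset count. Each defining generator that lies outside $\SO(3)$ (the reflection $\sigma_\ii$, or the elements $-\QQ(\kk;\pi/n)$, for which $\det(-\Idd)\det(\QQ)=-1$) squares into the rotation part and generates a single coset of it, so exactly half of the group is proper. This yields
\begin{align*}
  \ZZ_2^- \cap \SO(3) &= \triv, & \ZZ_{2n}^- \cap \SO(3) &= \ZZ_n, \\
  \DD_n^{v} \cap \SO(3) &= \ZZ_n, & \DD_{2n}^h \cap \SO(3) &= \DD_n, \\
  \OO(2)^- \cap \SO(3) &= \SO(2), & \octa^- \cap \SO(3) &= \tetra.
\end{align*}

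Next, to pass from the intersection formula to the clips identities, I would use that $\OO(3)=\SO(3)\times\set{\pm\Idd}$ with $-\Idd$ central. For a closed subgroup $H$ of $\SO(3)$ and $g=\pm r$ with $r\in \SO(3)$, one has $gHg^{-1}=rHr^{-1}$, so letting $g$ range over $\OO(3)$ produces exactly the same family of conjugates as letting $r$ range over $\SO(3)$. Moreover, two subgroups of $\SO(3)$ are $\OO(3)$-conjugate if and only if they are $\SO(3)$-conjugate, again because $-\Idd$ is central. Combining these two remarks with the first identity gives
\begin{equation*}
  [H_{1}]\circledcirc [H]=\set{[K\cap rHr^{-1}];\, r\in \SO(3)}=[K]\circledcirc [H],
\end{equation*}
where the last clips is the one read off \autoref{tab:SO3-clips}. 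Substituting the values of $K$ found above then produces the six stated identities.

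The main obstacle is the computation $\octa^-\cap \SO(3)=\tetra$, which, unlike the planar cases, cannot be settled by inspecting a single pair of generators: it requires the explicit description of $\octa^-$ from \autoref{subsubsec:OMoins} and \eqref{eq:omoins} in order to verify that its twelve proper elements form precisely the rotation group $\tetra$ of the tetrahedron rather than some other order $12$ subgroup. The remaining five intersections are routine determinant arguments once the generators are written down.
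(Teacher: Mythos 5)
Your argument is correct and is essentially the paper's own: the paper derives this lemma from the classification of type III subgroups by their characteristic couples $(L,H)$ with $L=\Gamma\cap\SO(3)$ (Table~\ref{tab:ClassIII}), and your determinant/coset computation of $H_{1}\cap\SO(3)$ reproduces exactly the $L$-column of that table, after which the reduction to the $\SO(3)$ clips table proceeds as you describe. Your explicit remark that $\OO(3)$-conjugacy of subgroups of $\SO(3)$ coincides with $\SO(3)$-conjugacy because $-\Idd$ is central is a point the paper leaves implicit, and it is worth having spelled out.
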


The resulting conjugacy classes for the clips operation for type III subgroups are given in ~\autoref{tab:otrglobal}, where the following notations have been used:
\begin{align*}
  d        & := \gcd(n,m), & d_{2}(n) & := \gcd(n,2),   \\
  d_{3}(n) & := \gcd(n,3), & i(n)     & := 3-\gcd(2,n), \\
  \ZZ_{1}^-&=\DD_{1}^v=\DD_{2}^h=\triv.
\end{align*}

\begin{scriptsize}
  \begin{table}[h]
    \renewcommand{\arraystretch}{1.8}
    \renewcommand\tabcolsep{2pt}
    \begin{tabular}{|c|c|c|c|c|c|c|}
      \hline
      $\circledcirc$            & $\left[\ZZ_{2}^-\right]$                & $\left[\ZZ_{2m}^-\right]$          & $\left[\DD_{m}^{v}\right]$ & $\left[\DD_{2m}^h\right]$ & $\left[\octa^-\right]$ & $\left[\OO(2)^-\right]$ \\
      \hline
      $\left[\ZZ_{2}^-\right]$  & $[\triv],\left[\ZZ_{2}^-\right]$        &                                    &                            &                           &                        &                         \\
      \cline{1-3}
      $\left[\ZZ_{2n}^-\right]$ & $[\triv],\left[ \ZZ_{i(n)}^{-}\right] $ & Figure \autoref{fig:clips-Z2n-Z2m} &                            &                           &                        &                         \\
      \cline{1-4}
      $\left[\DD_{n}^{v}\right]$ & $[\triv],\left[\ZZ_{2}^-\right]$ & Figure \autoref{fig:clips-Dn-Z2m}
      &
      \begin{tabular}{c}
      $[\triv],\left[ \ZZ_{2}^{-}\right]$ \\
      $\left[ \DD_{d}^{v} \right],\left[ \ZZ_{d} \right]$
    \end{tabular}
    & & & \\
    \cline{1-5}
    $\left[\DD_{2n}^h\right]$ & $[\triv],\left[\ZZ_{2}^-\right]$ & \autoref{fig:clips-D2n-Z2m} &  \autoref{fig:clips-D2n-Dm}  &  \autoref{fig:clips-D2n-D2m} & &
    \\
    \cline{1-6}
    $\left[\octa^-\right]$ & $[\triv],\left[\ZZ_{2}^-\right]$ &  Figure \autoref{fig:clips-cubic-Z2m}
    &
    \begin{tabular}{c}
      $[\triv],[\ZZ_{2}^{-}]$ \\
      $[\ZZ_{d_{3}(m)}]$      \\
      $[\DD_{d_{3}(m)}^{v}]$  \\
      $[\ZZ_{d_{2}(m)}]$      \\
      $[\DD_{d_{2}(m)}^{v}]$
    \end{tabular}
    &  Figure \autoref{fig:clips-cubic-D2m}
    &
    \begin{tabular}{l}
      $[\triv],[\ZZ_{2}^{-}]$ \\
      $[\ZZ_4^{-}],[\ZZ_{3}]$
    \end{tabular}
    & \\
    \cline{1-7}
    $\left[\OO(2)^-\right]$ & $[\triv],\left[\ZZ_{2}^-\right]$
    &
    \begin{tabular}{c}
      $[\triv],[\ZZ_{i(m)}^{-}]$ \\
      $[\ZZ_{m}]$
    \end{tabular}
    &
    $[\triv],[\ZZ_{2}^{-}],[\DD_{m}^{v}]$
    &
    \begin{tabular}{c}
      $[\triv]$                         \\
      $[\ZZ_{d_{2}(m)}],[\ZZ_{2}^{-}]$  \\
      $ [\DD_{i(m)}^{v}],[\DD_{m}^{v}]$
    \end{tabular}
    &
    \begin{tabular}{c}
      $[\triv],[\ZZ_{2}^{-}]$       \\
      $[\DD_{3}^{v}],[\DD_{2}^{v}]$
    \end{tabular}
    &
    $[\ZZ_{2}^{-}],[\OO(2)^-]$ \\
    \hline
    \end{tabular}
    \caption{Clips operations on type III $\OO(3)$-subgroups}
    \label{tab:otrglobal}
  \end{table}
\end{scriptsize}

\begin{rem}
  One misprint in~\cite{Oli2014,OA2014} for clips operation $[\OO(2)^{-}]\circledcirc [\DD_{2m}^h]$ has been corrected: the conjugacy class $[\ZZ_{2}]$ appears for $m$ even (and not for $m$ odd).
\end{rem}

\begin{figure}[H]
  \centering
  \subfigure[$\ZZ_{2n}^{-}$ case]{
    \includegraphics[scale=1]{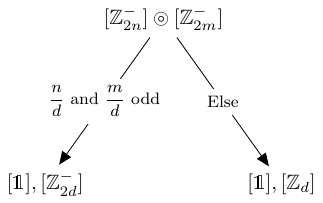}
    \label{fig:clips-Z2n-Z2m}
  }
  \subfigure[$\DD_{n}^{v}$ and $\ZZ_{2n}^{-}$]{
    \includegraphics[scale=0.9]{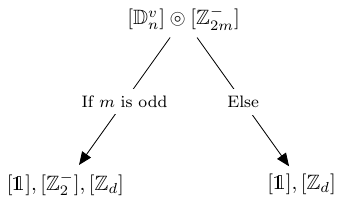}
    \label{fig:clips-Dn-Z2m}
  }
\end{figure}

\begin{figure}[H]
  \centering
  \includegraphics[scale=1]{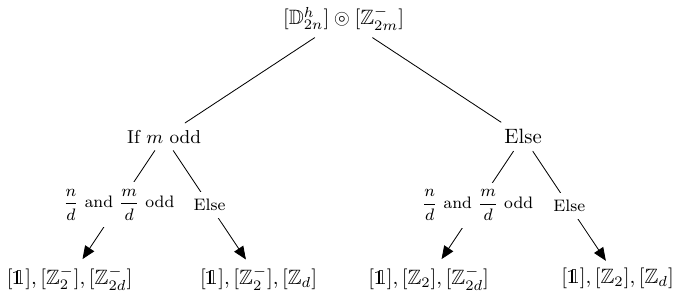}
  \caption{$\DD_{2n}^h$ and $\ZZ_{2n}^{-}$}
  \label{fig:clips-D2n-Z2m}
\end{figure}

\begin{figure}[H]
  \centering
  \includegraphics[scale=1]{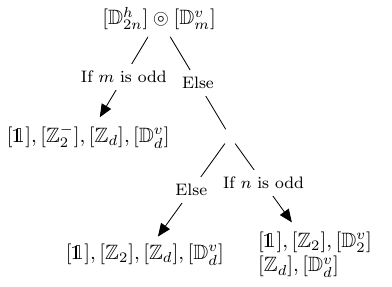}
  \caption{$\DD_{2n}^h$ and $\DD_{n}^{v}$}
  \label{fig:clips-D2n-Dm}
\end{figure}

\begin{figure}[H]
  \centering
  \includegraphics[scale=0.84]{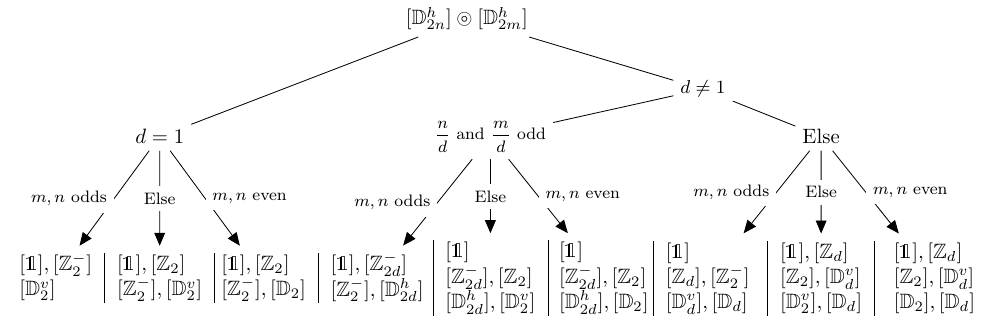}
  \caption{$\DD_{2n}^h$ and $\DD_{2m}^{h}$}
  \label{fig:clips-D2n-D2m}
\end{figure}

\begin{figure}[H]
  \centering
  \subfigure[$\OO^-$ and $\ZZ_{2m}^{-}$]{
    \includegraphics[scale=0.85]{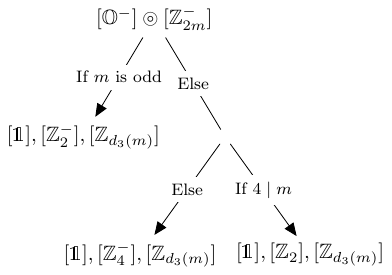}
    \label{fig:clips-cubic-Z2m}}
  \subfigure[$\OO^-$ and $\DD_{2m}^{h}$]{
    \includegraphics[scale=0.85]{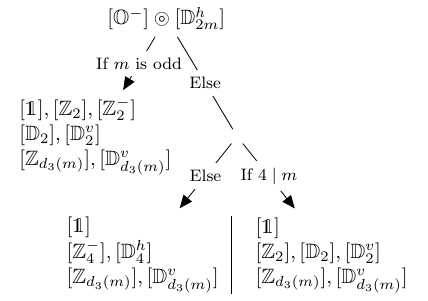}
    \label{fig:clips-cubic-D2m}
  }
\end{figure}

\subsection{Application to tensorial mechanical properties}\label{subsec:Mechanical_Prop}

We propose here some direct applications of our results for many different tensorial spaces, each one being endowed with the natural $\OO(3)$ representation. Such tensorial spaces occur for instance in the modeling of mechanical properties. The main idea is to use the \emph{irreducible decomposition}, also known as the \emph{harmonic decomposition}~\cite{Bac1970,FV1996}. We then use clips operations given in Table~\ref{tab:SO3-clips} and Table~\ref{tab:otrglobal}. Note that we don't need to know explicit irreducible decomposition of those tensorial representations.

From now on, we define $\Hn{n}^*$ to be $\OO(3)$ representation given by $(\Hn{n},\rho_{n}^*)$. Furthermore, we define 
\begin{equation*}
	\Hn{n}^{\oplus k}:=\bigoplus_{i=1}^{k} \Hn{n},\quad 	\Hn{n}^{\oplus k*}:=\bigoplus_{i=1}^{k} \Hn{n}^*.
\end{equation*}
We propose here to give some specific irreducible decompositions, related to mechanical theory, without any further details. We explain the use of clips operation in the classical case of Elasticity, all other examples being done in the same way.
 
\subsubsection{Classical results}

We first give some classsical results we obtain here directly

(1) Elasticity~\cite{FV1996}: $\SO(3)$ tensor space
\begin{equation*}
	\mathbb{E}\mathrm{la}\simeq \Hn{4}\oplus \Hn{2}^{\oplus 2}\oplus \Hn{0}^{\oplus 2}.
\end{equation*}
We have from Theorem~\ref{thm:SO3_Irr_Isot}
\begin{align*}
	\mathfrak{I}(\Hn{2})&=\{[\DD_2],[\OO(2)],[\SO(3)]\}, \\
	\mathfrak{I}(\Hn{4})&=\{[\triv],[\ZZ_2],[\DD_2],[\DD_3],[\DD_4],[\octa],[\OO(2)],[\SO(3)]\}
\end{align*}
so, to obtain $\mathfrak{I}(\Hn{2}^{\oplus 2})$ we use the clips table 
  \begin{center}
    \begin{tabular}{|c|c|c|}
      \hline
      $\circledcirc$ & $[\DD_{2}]$                         & $[\OO(2)]$                           \\
      \hline
      $[\DD_{2}]$    & $\set{[\triv],[\ZZ_{2}],[\DD_{2}]}$ & $\set{[\triv],[\ZZ_{2}],[\DD_{2}]}$  \\	
      \hline
      $[\OO(2)]$     &                                     & $\set{[\ZZ_{2}],[\DD_{2}],[\OO(2)]}$ \\
      \hline
    \end{tabular}
  \end{center}
so that
\begin{align*}
	\mathfrak{I}(\Hn{2}^{\oplus 2})&=\mathfrak{I}(\Hn{2})\circledcirc \mathfrak{I}(\Hn{2}) \\
	&=\left( [\DD_2]\circledcirc [\DD_2]\right) \bigcup \left( [\DD_2]\circledcirc [\OO_2]\right) \bigcup
	\left( [\OO_2]\circledcirc [\OO_2]\right) \\
	&=\{ [\triv],[\ZZ_{2}],[\DD_{2}],[\OO(2)]\}
\end{align*}
and now we conclude using the clips table coming from the clips operation
\begin{equation*}
	\mathfrak{I}(\mathbb{E}\mathrm{la})=\mathfrak{I}(\Hn{4})\circledcirc \mathfrak{I}(\Hn{2}^{\oplus 2}).
\end{equation*}
\noindent Finally we obtain 8 symmetry classes
\begin{equation*}
	\mathfrak{I}(\mathbb{E}\mathrm{la})=\{[\triv],[\ZZ_2],[\DD_2],[\DD_3],[\DD_4],[\octa],[\OO(2)],[\SO(3)]\}.
\end{equation*}

(2) Photoelasticity~\cite{FV1997}: $\SO(3)$ tensor space 
\begin{align*}
	\mathbb{P}\mathrm{la}\simeq \Hn{4}\oplus &\Hn{3}\oplus \Hn{2}^{\oplus 3}\oplus \Hn{1}\oplus \Hn{0}^{\oplus 2}.
\end{align*}
\noindent 12 symmetry classes
\begin{multline*}
	\mathfrak{I}(\mathbb{P}\mathrm{la})=\lbrace [\triv],[\ZZ_2],[\DD_2],[\ZZ_3],[\DD_3],[\ZZ_4],[\DD_4],[\tetra],[\octa],\\
	[\SO(2)],[\OO(2)],[\SO(3)]\rbrace.
\end{multline*}

(3) Piezoelecricity~\cite{GW2002}: $\OO(3)$ tensor space 
\begin{align*}
	\Pie\simeq\Hn{3}\oplus \Hn{2}^*\oplus \Hn{1}^{\oplus 2}.
\end{align*}
\noindent 16 symmetry classes
\begin{multline*}
\mathfrak{I}(\mathbb{P}\mathrm{iez})=\lbrace [\triv],[\ZZ_2],[\ZZ_{3}],[\DD^{v}_2],[\DD^{v}_{3}],[\ZZ^{-}_2],[\ZZ^{-}_{4}],[\DD_2],[\DD_{3}],[\DD^{h}_4],[\DD^{h}_{6}],\\
[\SO(2)],[\OO(2)],[\OO(2)^{-}],[\octa^-],[\OO(3)]\rbrace.
\end{multline*}

\begin{rem}
Note that the symmetry classes of the space $(\Pie,\OO(3))$ appear in many different works : in Weller phD thesis~\cite{Wel2004}[Theorem 3.19, p.84], where 14 symmetry classes are announced, but 15 appeared in the poset, in Weller--Geymonat~\cite{GW2002} where they establish 14 symmetry classes, in Olive--Auffray~\cite{OA2014} (with a typo), in Zou \& al~\cite{ZTP2013} (without $[\OO(3)]$ symmetry class), and finally in Olive~\cite{Oli2014}, with 16 symmetry classes. 
\end{rem}

\subsubsection{Non classical results}

We now present some non classical tensor space, coming from Cosserat elasticity~\cite{Cos1909,Eri1966,For2005,FS2006} and Strain gradient elasticity~\cite{Min1964,ME1968,PB1968,ADR2015}, with their harmonic decomposition and symmetry classes.

(1) Classical Cosserat elasticity: $\SO(3)$ tensor space  
\begin{align*}
	\mathbb{C}\mathrm{os}\simeq \Hn{4}\oplus &\Hn{3}\oplus \Hn{2}^{\oplus 4}\oplus\Hn{1}^{\oplus 2}\oplus \Hn{0}^{\oplus 3}.
\end{align*}
\noindent 12 symmetry classes
\begin{multline*}
	\mathfrak{I}(\mathbb{C}\mathrm{os})=\lbrace [\triv],[\ZZ_2],[\ZZ_3],[\ZZ_4],[\DD_2],[\DD_3],[\DD_4],[\tetra],[\octa],\\
	[\SO(2)],[\OO(2)],[\SO(3)]\rbrace.
\end{multline*}

(2) Rotational Cosserat elasticity: $\OO(3)$ tensor space
\begin{align*}
	\mathbb{C}\mathrm{hi}\simeq \Hn{4}^*\oplus &\Hn{3}^{\oplus 3}\oplus \Hn{2}^{\oplus 6*}\oplus\Hn{1}^{\oplus 6}\oplus \Hn{0}^{\oplus 3*}.
\end{align*}
\noindent 24 symmetry classes 
\begin{multline*}
	\mathfrak{I}(\mathbb{C}\mathrm{hi})=\lbrace [\triv],[\ZZ_2],[\ZZ_3],[\ZZ_4],[\ZZ_2^-],[\ZZ_4^-],[\ZZ_6^-],[\DD_2],[\DD_3],[\DD_4],\\
	[\DD_2^v],[\DD_3^v],[\DD_4^v],[\DD_4^h],[\DD_6^h],[\DD_8^h],[\tetra],[\octa],[\octa^-],[\SO(2)],\\
	[\OO(2)],[\OO(2)^-],[\SO(3)],[\OO(3)]\rbrace.
\end{multline*}

(3) Fifth-order $\OO(3)$ tensor space in strain gradient elasticity, given by
\begin{align*}
	\mathbb{S}\mathrm{ge}\simeq \Hn{5}\oplus &\Hn{4}^{\oplus 2*}\oplus \Hn{3}^{\oplus 5}\oplus\Hn{2}^{\oplus 5*}\oplus \Hn{1}^{\oplus 6}\oplus \Hn{0}^{*}.
\end{align*}
\noindent 29 symmetry classes
\begin{multline*}
	\mathfrak{I}(\mathbb{S}\mathrm{ge})=\lbrace [\triv],[\ZZ_2],[\ZZ_3],[\ZZ_4],[\ZZ_5],[\ZZ_2^-],[\ZZ_4^-],\\
	[\ZZ_6^-],[\ZZ_8^-],[\DD_2],[\DD_3],[\DD_4],[\DD_5],[\DD_2^v],[\DD_3^v],[\DD_4^v],\\
	[\DD_5^v],[\DD_4^h],[\DD_6^h],[\DD_8^h],[\DD_{10}^h],[\tetra],[\octa],[\octa^-],\\
	[\SO(2)],[\OO(2)],[\OO(2)^-],[\SO(3)],[\OO(3)]\rbrace.
\end{multline*}
(4) Fifth order $\OO(3)$ tensor space of acoustical gyrotropic tensor~\cite{PB1968} (reducing to fourth order tensor space), given by
\begin{align*}
	\mathbb{A}\mathrm{gy}\simeq \Hn{4}^{*}\oplus &\Hn{3}^{\oplus 2}\oplus \Hn{2}^{\oplus 3*}\oplus\Hn{1}^{\oplus 2}\oplus \Hn{0}^{*}.
\end{align*}
\noindent 24 symmetry classes
\begin{multline*}
	\mathfrak{I}(\mathbb{A}\mathrm{gy})=\lbrace [\triv],[\ZZ_2],[\ZZ_3],[\ZZ_4],[\ZZ_2^-],[\ZZ_4^-],[\ZZ_6^-],[\DD_2],[\DD_3],[\DD_4],\\
	[\DD_2^v],[\DD_3^v],[\DD_4^v],[\DD_4^h],[\DD_6^h],[\DD_8^h],[\tetra],[\octa],[\octa^-],[\SO(2)],\\
	[\OO(2)],[\OO(2)^-],[\SO(3)],[\OO(3)]\rbrace.
\end{multline*}

\appendix

\section{Proofs for $\SO(3)$}
\label{sec:proofs-SO3}

In this section, we provide all the details required to obtain the results in~\autoref{tab:SO3-clips}. We will start by the following definition which was introduced in~\cite{GSS1988} and happens to be quite useful for this task.
\begin{defn}
  Let $K_{1}, K_{2}, \dotsc , K_s$ be subgroups of a given group $G$. We say that $G$ is the \emph{direct union} of the $K_{i}$ and we write $G=\biguplus_{i=1}^s K_{i}$ if
  \begin{equation*}
    G = \bigcup_{i=1}^s K_{i} \qquad \text{and} \qquad K_{i}\cap K_{j} = \set{e},\quad \forall i\neq j.
  \end{equation*}
\end{defn}

In the following, we will have to identify repeatedly the conjugacy class of intersections such as
\begin{equation}\label{eq:Int_Subgroup}
  H_{1}\cap \left(g H_{2}g^{-1}\right),
\end{equation}
where $H_{1}$ and $H_{2}$ are two closed subgroups of $\SO(3)$ and $g\in \SO(3)$. A useful observation is that all closed $\SO(3)$ subgroups have some \emph{characteristic axes} and that intersection~\eqref{eq:Int_Subgroup} depends only on the relative positions of these characteristic axes.

As detailed below, for any subgroup conjugate to $\ZZ_{n}$ or $\DD_{n}$ ($n\geq 3$), the axis of an $n$-th order rotation (in this subgroup) is called its \emph{primary axis}. For subgroups conjugate to $\DD_{n}$ ($n\geq 3$), axes of order two rotations are said to be \emph{secondary axes}. In the special case $n=2$, the $z$-axis is the primary axis of $\ZZ_{2}$, while any of the $x$, $y$ or $z$ axis can be considered as a primary axis of $\DD_{2}$.

\subsection{Cyclic subgroup}

For any axis $a$ of $\RR^{3}$ (throughout the origin), we denote by $\ZZ_{n}^{a}$, the unique cyclic subgroup of order $n$ around the $a$-axis, which is its primary axis. We have then:

\begin{lem}\label{lem:intercy}
  Let $m,n\geq 2$ be two integers and $d=\gcd(n,m)$. Then
  \begin{equation*}
    [\ZZ_{n}]\circledcirc [\ZZ_{m}]=\set{[\triv],[\ZZ_{d}]}.
  \end{equation*}
\end{lem}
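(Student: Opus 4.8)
The plan is to exploit the observation, recorded just before the lemma, that the intersection depends only on the relative position of the primary axes of the two cyclic groups. I fix $\ZZ_{n}=\ZZ_{n}^{\kk}$ to be the cyclic group of order $n$ about the $z$-axis. For $g\in\SO(3)$, conjugation acts on rotations by $g\,\QQ(\vv;\theta)\,g^{-1}=\QQ(g\cdot\vv;\theta)$, so $g\ZZ_{m}g^{-1}=\ZZ_{m}^{g\cdot\kk}$ is again a cyclic group of order $m$, now about the axis $g\cdot\kk$. Thus every element of $[\ZZ_{n}]\circledcirc[\ZZ_{m}]$ is the conjugacy class of an intersection $\ZZ_{n}^{\kk}\cap\ZZ_{m}^{b}$ with $b=g\cdot\kk$, and the entire computation reduces to comparing the two lines $\RR\kk$ and $\RR b$.

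First I would treat the case where the two axes coincide as lines, i.e. $g\cdot\kk=\pm\kk$. If $g\cdot\kk=\kk$ then $g\in\SO(2)$ commutes with $\ZZ_{m}\subset\SO(2)$, so $g\ZZ_{m}g^{-1}=\ZZ_{m}$; if $g\cdot\kk=-\kk$ then conjugation sends $\QQ(\kk;\alpha)$ to $\QQ(\kk;-\alpha)$, which again preserves $\ZZ_{m}$ as a set. In both subcases the intersection is $\ZZ_{n}\cap\ZZ_{m}$ taken inside the common $\SO(2)$, and an elementary computation with rotation angles (the common rotations are exactly those by multiples of $2\pi/d$) gives $\ZZ_{n}\cap\ZZ_{m}=\ZZ_{d}$ with $d=\gcd(n,m)$. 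This produces the class $[\ZZ_{d}]$, already realized by $g=\Idd$.

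Next I would treat the case of distinct axes, $g\cdot\kk\neq\pm\kk$, so $\RR\kk$ and $\RR b$ are different lines through the origin. Here the key elementary fact is that a non-identity element of $\SO(3)$ is a rotation with a \emph{unique} axis: any nontrivial element of $\ZZ_{n}^{\kk}$ fixes only $\RR\kk$, while any nontrivial element of $\ZZ_{m}^{b}$ fixes only $\RR b$. Since these lines differ, no nontrivial rotation can lie in both groups, whence $\ZZ_{n}^{\kk}\cap\ZZ_{m}^{b}=\triv$. Choosing any $g$ that tilts $\kk$ off its axis realizes the class $[\triv]$, so both $[\triv]$ and $[\ZZ_{d}]$ occur and nothing else can.

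Finally I would record the degenerate convention: when $d=1$ the class $[\ZZ_{d}]=[\ZZ_{1}]$ coincides with $[\triv]$, consistent with $\ZZ_{1}=\triv$, so the stated set $\set{[\triv],[\ZZ_{d}]}$ is correct for all $m,n\geq 2$. The only genuinely delicate point is the axis dichotomy—specifically checking that the orientation-reversing alignment $g\cdot\kk=-\kk$ still preserves $\ZZ_{m}$ rather than producing a new group, and the clean use of uniqueness of the rotation axis to kill the intersection when the axes differ; everything else is routine bookkeeping with rotation angles.
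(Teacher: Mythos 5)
Your proof is correct and follows essentially the same route as the paper's: reduce to the relative position of the two rotation axes, obtain $\ZZ_{d}$ when they coincide and $\triv$ when they differ via uniqueness of the rotation axis. The only difference is cosmetic — you explicitly check the $g\cdot\kk=-\kk$ alignment and argue the coincident case by rotation angles rather than by orders of elements dividing $\gcd(n,m)$, but these are equivalent bookkeeping.
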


\begin{proof}
  We have to consider intersections, such as
  \begin{equation*}
    \ZZ_{n}\cap (g\ZZ_{m}g^{-1})=\ZZ_{n}\cap \ZZ_{m}^{a},
  \end{equation*}
  for some axis $a$, and only two cases occur:
  \begin{enumerate}
    \item If $a\neq (Oz)$, then necessarily the intersection reduces to $\triv$.
    \item If $a=(Oz)$, then the order $r$ of a rotation in $\ZZ_{n}\cap \ZZ_{m}$ divides both $n$ and $m$ and thus divides $d=\gcd(m,n)$. We get therefore: $\ZZ_{n}\cap \ZZ_{m} \subset \ZZ_{d}$. But obviously, $\ZZ_{d} \subset \ZZ_{n}\cap \ZZ_{m}$ and thus $\ZZ_{n}\cap \ZZ_{m}=\ZZ_{d}$.
  \end{enumerate}
\end{proof}

\subsection{Dihedral subgroup}

Let $b_{1}$ be the $x$-axis and $b_{k}$ ($k=2,\dotsc,n$) be the axis recursively defined by
\begin{equation*}
  b_{k} := \QQ\left(\kk;\displaystyle{\frac{\pi}{n}}\right)b_{k-1}.
\end{equation*}
Then, we have
\begin{equation}\label{diedre}
  \DD_{n} = \ZZ_{n} \biguplus \ZZ_{2}^{b_{1}} \biguplus \dotsb \biguplus \ZZ_{2}^{b_{n}},
\end{equation}
where the $z$-axis (corresponding to a $n$-th order rotation) is the primary axis and the $b_{k}$-axes (corresponding to order two rotations) are the secondary axes of this dihedral group (see~\autoref{fig:dihedral-second-axis}).
\begin{figure}[h]
  \centering
  \includegraphics{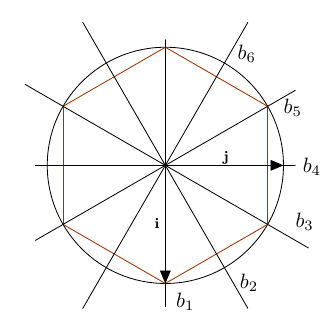}
  \caption{Secondary axis of the dihedral group $\DD_{6}$}
  \label{fig:dihedral-second-axis}
\end{figure}

\begin{lem}\label{lem:Clis_Diedre_Cyclic}
  Let $m,n\geq 2$ be two integers. Set $d:=\gcd(n,m)$ and $d_{2}(m):=\gcd(m,2)$. Then, we have
  \begin{equation*}
    [\DD_{n}]\circledcirc [\ZZ_{m}]=\set{[\triv],[\ZZ_{d_{2}(m)}],[\ZZ_{d}] }.
  \end{equation*}
\end{lem}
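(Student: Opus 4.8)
The plan is to reduce the computation of the conjugacy class of $\DD_{n}\cap(g\ZZ_{m}g^{-1}) = \DD_{n}\cap\ZZ_{m}^{a}$ to an analysis of the position of the axis $a$ relative to the characteristic axes of $\DD_{n}$, exactly in the spirit of the proof of Lemma~\ref{lem:intercy}. The key structural input is the direct union decomposition~\eqref{diedre}: every nontrivial element of $\DD_{n}$ is either a rotation about the primary $z$-axis (and these constitute $\ZZ_{n}$) or one of the order-two rotations about a secondary axis $b_{k}$. In particular, the rotation axis of any nontrivial element of $\DD_{n}$ is forced to be either the $z$-axis or one of the $b_{k}$.

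First I would split into three cases according to $a$. If $a$ is the $z$-axis, then $\ZZ_{m}^{a}$ consists of rotations about $z$, so $\DD_{n}\cap\ZZ_{m}^{z}=\ZZ_{n}\cap\ZZ_{m}^{z}=\ZZ_{d}$ by the argument of Lemma~\ref{lem:intercy}, contributing the class $[\ZZ_{d}]$. If $a=b_{k}$ for some secondary axis, the only elements of $\DD_{n}$ that are rotations about $b_{k}$ are the identity and the order-two rotation generating $\ZZ_{2}^{b_{k}}$; hence $\DD_{n}\cap\ZZ_{m}^{b_{k}}=\ZZ_{2}^{b_{k}}\cap\ZZ_{m}^{b_{k}}$, which equals $\ZZ_{2}$ when $m$ is even and $\triv$ when $m$ is odd, that is $\ZZ_{d_{2}(m)}$ in both cases, contributing $[\ZZ_{d_{2}(m)}]$. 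Finally, if $a$ is any other axis, no nontrivial element of $\DD_{n}$ is a rotation about $a$, so the intersection is trivial, contributing $[\triv]$.

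Then I would check that each of the three classes is actually attained by a suitable $g\in\SO(3)$: since $\SO(3)$ acts transitively on axes, one can conjugate $\ZZ_{m}$ so that its axis is $z$, a chosen $b_{k}$, or a generic axis, realizing $[\ZZ_{d}]$, $[\ZZ_{d_{2}(m)}]$, and $[\triv]$ respectively. Together with the case analysis showing that no other intersection can occur, this yields $[\DD_{n}]\circledcirc[\ZZ_{m}]=\set{[\triv],[\ZZ_{d_{2}(m)}],[\ZZ_{d}]}$.

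I do not expect a serious obstacle here, but the most delicate point is twofold: the bookkeeping that unifies the even and odd $m$ subcases of the secondary-axis computation into the single value $\ZZ_{d_{2}(m)}$, and the verification that when $a$ is neither the primary nor a secondary axis the intersection is genuinely trivial. The latter rests on the fact that a nontrivial rotation determines its axis uniquely, so simultaneous membership in $\DD_{n}$ and $\ZZ_{m}^{a}$ would force $a$ to coincide with the $z$-axis or with some $b_{k}$.
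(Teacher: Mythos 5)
Your proposal is correct and follows essentially the same route as the paper: both use the direct union decomposition~\eqref{diedre} of $\DD_{n}$ into $\ZZ_{n}$ and the $\ZZ_{2}^{b_{k}}$, reduce each piece's intersection with $g\ZZ_{m}g^{-1}$ to Lemma~\ref{lem:intercy} via the position of the axis $a$, and note that all three classes are realized. The paper merely states this more tersely; your explicit case analysis on $a$ and the realizability check are the same argument spelled out in full.
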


\begin{proof}
  Let $\Gamma=\DD_{n} \cap g\ZZ_{m}g^{-1}$ for $g\in \SO(3)$. From decomposition \eqref{diedre}, we have to consider intersections
  \begin{equation*}
    \ZZ_{n}\cap g\ZZ_{m}g^{-1},\quad \ZZ_{2}^{b_{j}}\cap g\ZZ_{m}g^{-1}.
  \end{equation*}
  which thus reduce to Lemma~\ref{lem:intercy}.
\end{proof}

\begin{lem}
  Let $m,n\geq 2$ be two integers. Set $d := \gcd(n,m)$ and
  \begin{equation*}
   dz :=
    \begin{cases}
      2 \quad \text{if} \quad m \text{ and } n \text{ even}, \\
      1 \quad \text{otherwise}.
    \end{cases}
  \end{equation*}
  Then, we have
  \begin{equation*}
    [\DD_{n}]\circledcirc [\DD_{m}] = \set{[\triv],[\ZZ_{2}],[\DD_{dz}],[\ZZ_{d}],[\DD_{d}]}.
  \end{equation*}
\end{lem}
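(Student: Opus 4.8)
The plan is to determine all conjugacy classes arising as $\DD_{n}\cap g\DD_{m}g^{-1}$ for $g\in\SO(3)$ by exploiting the direct union decomposition~\eqref{diedre} together with Lemmas~\ref{lem:intercy} and~\ref{lem:Clis_Diedre_Cyclic}, which already control the intersections of the cyclic pieces. The governing principle is that a rotation of order at least $3$ in a dihedral group can only be a power of the $n$-th order rotation about the primary axis; hence any rotation of order $\geq 3$ common to $\DD_{n}$ and $g\DD_{m}g^{-1}$ forces their primary axes to coincide. This splits the analysis into two regimes according to the position of the primary axis $a$ of $g\DD_{m}g^{-1}$ relative to the $z$-axis.

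First I would treat the \emph{aligned} case $a=\pm\kk$. Here $\ZZ_{n}\cap g\ZZ_{m}g^{-1}=\ZZ_{d}$ by Lemma~\ref{lem:intercy}, and all secondary axes of both groups lie in the $xy$-plane, so the only further common elements are order-two rotations about secondary axes shared by the two groups. Writing the secondary axes of $\DD_{n}$ at angles in $(\pi/n)\ZZ$ and those of $g\DD_{m}g^{-1}$ at angles in $\phi+(\pi/m)\ZZ$ (where $\phi$ is the residual rotation of $g$ about $\kk$), a generic $\phi$ produces no shared secondary axis, giving $\DD_{n}\cap g\DD_{m}g^{-1}=\ZZ_{d}$, whereas $\phi=0$ makes the shared secondary axes exactly the multiples of $\pi/d$, of which there are $d$ modulo $\pi$. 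Since conjugating one such order-two element by the generator of $\ZZ_{d}$ permutes these $d$ axes, the intersection is then precisely $\DD_{d}$. This regime thus contributes exactly $[\ZZ_{d}]$ and $[\DD_{d}]$.

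Next I would treat the \emph{non-aligned} case $a\neq\pm\kk$. Now no rotation of order $\geq 3$ is common, so the intersection contains only the identity and order-two rotations; the only such subgroups of $\SO(3)$ are $\triv$, $\ZZ_{2}$ and $\DD_{2}$. A single shared secondary axis (arranged along the line where the $xy$-plane meets the plane orthogonal to $a$) yields $\ZZ_{2}$, and a generic position yields $\triv$, both realizable for all $m,n\geq 2$. The class $\DD_{2}$ requires three mutually orthogonal shared order-two axes, hence $\DD_{2}\subseteq\DD_{n}$ and $\DD_{2}\subseteq g\DD_{m}g^{-1}$, which holds only when $2\mid n$ and $2\mid m$; conversely, when both are even I would place the primary axes orthogonally (say $a=\ii$) with the offset chosen so that $\kk$ and one secondary axis are shared, producing the three order-two rotations about $\ii,\jj,\kk$ and hence $\DD_{2}$. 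This is exactly the factor $[\DD_{dz}]$, which equals $[\DD_{2}]$ when $m,n$ are both even and $[\triv]$ otherwise.

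Collecting the two regimes gives the announced set $\set{[\triv],[\ZZ_{2}],[\DD_{dz}],[\ZZ_{d}],[\DD_{d}]}$, and since each class has been realized by an explicit $g$ the inclusion is an equality. The step I expect to require the most care is the aligned case: verifying that a single shared secondary axis forces exactly $d$ of them (so that the intersection is the full $\DD_{d}$, neither larger nor smaller), and checking that the offsets $\phi=0$ and $\phi$ generic are the only behaviours that occur, so that no intermediate dihedral class sneaks in.
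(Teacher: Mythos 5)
Your argument is correct and takes essentially the same route as the paper's own proof, namely a case analysis on the relative position of the primary and secondary axes of $\DD_{n}$ and $g\DD_{m}g^{-1}$ (your version is in fact more systematic, since you justify exhaustiveness via the order-$\geq 3$ elements and the classification of involution-only subgroups). The only point to watch is the degenerate aligned case $d=1$ with one shared secondary axis, where the intersection is a single half-turn, i.e.\ a copy of $\ZZ_{2}$ rather than the conventional $\DD_{1}=\triv$; this does not affect the final set since $[\ZZ_{2}]$ is realized elsewhere.
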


\begin{proof}
  Let $\Gamma=\DD_{n} \cap \left(g\DD_{m} g^{-1}\right)$.
  \begin{enumerate}
    \item If both primary axes and one secondary axis match, $\Gamma=\DD_{d}$ if $d\neq 1$ and $\Gamma\in [\ZZ_{2}]$ otherwise;
    \item if only the primary axes match, $\Gamma=\ZZ_{d}$;
    \item if the angle of primary axes is $\dfrac{\pi}{4}$ and a secondary axis match, then $\Gamma\in [\ZZ_{2}]$
    \item if the primary axis of $g\DD_{m}g^{-1}$ matches with the secondary axis $(Ox)$ of $\DD_{n}$ (or the converse), we obtain $\Gamma\in [\DD_{2}]$ for $n$ and $m$ even and a secondary axis of $g\DD_{m}g^{-1}$ is $(Oz)$, otherwise we obtain $\Gamma\in [\ZZ_{2}]$
    \item in all other cases we have $\Gamma=\triv$.
  \end{enumerate}
\end{proof}

\subsection{Tetrahedral subgroup}

The (orientation-preserving) symmetry group $\tetra$ of the tetrahedron $\mathcal{T}_{0}:=A_{1}A_{3}A_7A_{5}$ (see \autoref{fig:cube0}) decomposes as (see~\cite{IG1984}):
\begin{equation}\label{dtetra}
  \tetra= \biguplus_{i=1}^4 \ZZ_{3}^{\avt_{i}} \biguplus_{j=1}^3 \ZZ_{2}^{\aet_{j}}
\end{equation}
where $\avt_{i}$ (resp. $\aet_{j}$) are the \emph{vertices axes} (resp. \emph{edges axes}) of the tetrahedron (see ~\autoref{fig:cube0}):
\begin{align*}
  \avt_{1} & := (OA_{1}), & \avt_{2} & := (OA_{3}), & \avt_{3} & := (OA_{5}), & \avt_4 & := (OA_7),
  \\
  \aet_{1} & := (Ox),     & \aet_{2} & :=(Oy) ,     & \aet_{3} & :=(Oz).      &        &
\end{align*}

\begin{figure}[h]
  \centering
  \includegraphics{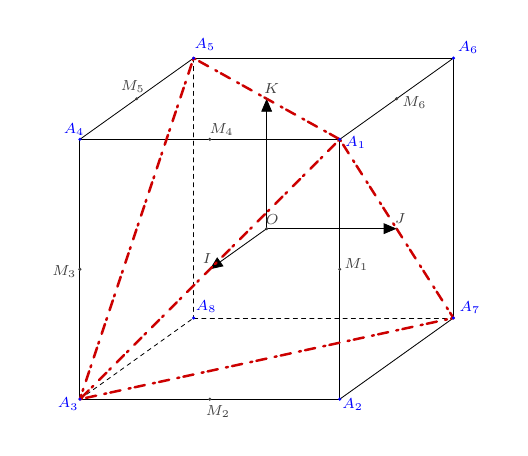}
  \caption{Cube $\mathcal{C}_{0}$ and tetrahedron $\mathcal{T}_{0}:=A_{1}A_{3}A_7A_{5}$}
  \label{fig:cube0}
\end{figure}

\begin{cor}\label{lem:Clips_Tetra_Cyclic}
  Let $n\geq 2$ be an integer. Set $d_{2}(n):=\gcd(n,2)$ and $d_{3}(n):=\gcd(3,n)$. Then, we have
  \begin{equation*}
    [\ZZ_{n}] \circledcirc [\tetra] = \set{[\triv],[\ZZ_{d_{2}(n)}],[\ZZ_{d_{3}(n)}]}.
  \end{equation*}
\end{cor}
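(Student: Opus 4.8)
The plan is to treat this as a direct corollary of \autoref{lem:intercy} together with the direct‑union decomposition~\eqref{dtetra} of $\tetra$. By commutativity of the clips operation it suffices to analyze intersections of the form $\tetra \cap \left(g\ZZ_{n}g^{-1}\right)$ for $g\in\SO(3)$. Since $g\ZZ_{n}g^{-1} = \ZZ_{n}^{a}$ is the cyclic group of order $n$ about the axis $a := g\cdot(Oz)$, the whole computation reduces to identifying the conjugacy class of $\tetra \cap \ZZ_{n}^{a}$ as $a$ ranges over all axes of $\RR^{3}$.

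The key structural observation I would isolate first is that every nontrivial rotation has a single well‑defined axis, so a nontrivial element can lie simultaneously in $\ZZ_{n}^{a}$ and in $\tetra$ only when $a$ coincides with one of the seven characteristic axes appearing in~\eqref{dtetra}: the four vertices axes $\avt_{i}$ (carrying order‑$3$ rotations) or the three edges axes $\aet_{j}$ (carrying order‑$2$ rotations). Because~\eqref{dtetra} is a \emph{direct union}, all rotations of $\tetra$ about a fixed axis belong to exactly one of its cyclic summands, which rules out any ``mixed'' contribution and lets me replace $\tetra$ by a single $\ZZ_{3}^{\avt_{i}}$ or $\ZZ_{2}^{\aet_{j}}$ in the intersection.

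With this in hand the argument splits into three cases. If $a$ is none of the seven characteristic axes, then $\ZZ_{n}^{a}\cap\tetra = \triv$, contributing $[\triv]$. If $a$ matches a vertex axis $\avt_{i}$, then $\ZZ_{n}^{a}\cap\tetra = \ZZ_{n}^{a}\cap\ZZ_{3}^{\avt_{i}}$, and \autoref{lem:intercy} gives $\ZZ_{\gcd(n,3)} = \ZZ_{d_{3}(n)}$. If $a$ matches an edge axis $\aet_{j}$, then $\ZZ_{n}^{a}\cap\tetra = \ZZ_{n}^{a}\cap\ZZ_{2}^{\aet_{j}}$, which by the same lemma equals $\ZZ_{\gcd(n,2)} = \ZZ_{d_{2}(n)}$. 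This establishes the inclusion $[\ZZ_{n}]\circledcirc[\tetra] \subseteq \set{[\triv],[\ZZ_{d_{2}(n)}],[\ZZ_{d_{3}(n)}]}$.

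For the reverse inclusion I would simply exhibit a realizing axis in each case: a generic $a$ yields $[\triv]$, choosing $a = \avt_{1}$ yields $[\ZZ_{d_{3}(n)}]$, and choosing $a = \aet_{1}$ yields $[\ZZ_{d_{2}(n)}]$, so all three classes are attained (with the understanding that when $d_{2}(n)=1$ or $d_{3}(n)=1$ the corresponding symbol simply collapses to $[\triv]$). The only real subtlety, and the point I would argue most carefully, is the direct‑union step: that the rotations of $\tetra$ fixing a given axis are confined to a single summand of~\eqref{dtetra}, so that the seven‑axis case analysis is genuinely exhaustive and no additional intersection class can appear.
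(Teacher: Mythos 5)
Your proof is correct and follows essentially the same route as the paper: reduce $\tetra\cap\ZZ_{n}^{a}$ via the direct-union decomposition~\eqref{dtetra} to the cyclic intersections $[\ZZ_{2}]\circledcirc[\ZZ_{n}]$ and $[\ZZ_{3}]\circledcirc[\ZZ_{n}]$ handled by Lemma~\ref{lem:intercy}. Your version simply spells out more explicitly the exhaustiveness of the seven-axis case analysis and the realization of each class, which the paper leaves implicit.
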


\begin{proof}
  Consider $\tetra \cap \ZZ_{n}^{a}$ for some axis $a$. As a consequence of Lemma~\ref{lem:intercy}, we need only to consider the case where $a$ is an edge axis or a face axis of the tetrahedron, reducing to the clips operations
  \begin{equation*}
    [\ZZ_{2}]\circledcirc [\ZZ_{n}],\quad [\ZZ_{3}]\circledcirc [\ZZ_{n}]
  \end{equation*}
  which directly leads to the Lemma.
\end{proof}

\begin{cor}
  Let $n\geq 2$ be some integer. Set $d_{2}(n):=\gcd(n,2)$ and $d_{3}(n):=\gcd(3,n)$. Then, we have
  \begin{equation*}
    [\DD_{n}] \circledcirc [\tetra] = \set{[\triv],[\ZZ_{2}],[\ZZ_{d_{3}(n)}],[\DD_{d_{2}(n)}]}.
  \end{equation*}
\end{cor}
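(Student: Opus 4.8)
The plan is to follow the same philosophy as the previous lemmas: since $\Gamma := \DD_{n} \cap g\tetra g^{-1}$ depends only on the relative positions of the characteristic axes, I would organize the proof around the possible coincidences between the axes of $\DD_{n}$ -- its primary axis $(Oz)$ carrying the order-$n$ rotations and its secondary axes $b_{1},\dots,b_{n}$ carrying order-two rotations, as in the decomposition \eqref{diedre} -- and the axes of $g\tetra g^{-1}$, namely the four vertex axes $\avt_{i}$ (order three) and the three mutually orthogonal edge axes $\aet_{j}$ (order two) recorded in \eqref{dtetra}. I would prove the two inclusions separately: first that no conjugacy class outside the claimed set can occur, then that each listed class is effectively realized by a suitable $g$.

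For the upper bound, I would observe that $\Gamma$ is simultaneously a subgroup of $\DD_{n}$ and, up to conjugacy, a subgroup of $\tetra \cong A_{4}$. The subgroups of $A_{4}$ are, up to conjugacy, $\triv$, $\ZZ_{2}$, $\ZZ_{3}$, the normal Klein four-group $\DD_{2}$ of the three edge rotations, and $\tetra$ itself; in particular $A_{4}$ has no subgroup of order $6$. Since $\tetra$ is not dihedral, $\tetra \not\subseteq \DD_{n}$, which rules out $[\tetra]$. Finally $[\ZZ_{3}] \preceq [\DD_{n}]$ forces $3 \mid n$ and $[\DD_{2}] \preceq [\DD_{n}]$ forces $2 \mid n$; with the conventions $\ZZ_{1} = \DD_{1} = \triv$ this is exactly the assertion that $[\Gamma]$ lies in $\set{[\triv],[\ZZ_{2}],[\ZZ_{d_{3}(n)}],[\DD_{d_{2}(n)}]}$.

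For the lower bound I would exhibit explicit configurations. A generic $g$ produces no axis coincidence and yields $[\triv]$. Aligning a single secondary axis $b_{k}$ with one edge axis $\aet_{j}$, and nothing else, shares exactly the corresponding order-two rotation and realizes $[\ZZ_{2}]$ for every $n \ge 2$. When $3 \mid n$, aligning $(Oz)$ with a vertex axis $\avt_{i}$ places the two order-three rotations about $(Oz)$ in $\Gamma$; since every edge axis makes an angle $\neq \pi/2$ with every vertex axis, no edge axis lies in the plane $(Oxy)$, so no secondary axis can match simultaneously, giving $\Gamma = \ZZ_{3}$. When $2 \mid n$ the two secondary axes $b_{1}$ and $b_{1+n/2}$ are orthogonal, and together with $(Oz)$ they form three mutually orthogonal order-two axes; taking $g$ the identity aligns them with $\aet_{1},\aet_{2},\aet_{3}$, so that the full Klein four-group is common, and as the only subgroup of $\tetra$ strictly containing it is $\tetra \not\subseteq \DD_{n}$, we get $\Gamma = \DD_{2}$.

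The main obstacle is the $[\DD_{2}]$ case: one must check both that three mutually orthogonal two-fold axes can genuinely be matched -- which is precisely where the parity condition $2 \mid n$ enters, through the existence of orthogonal secondary axes -- and that this coincidence forces the whole Klein four-group rather than something larger, which is settled by the subgroup lattice of $A_{4}$. The only remaining care is a transversality remark guaranteeing that each of the simpler alignments above can be achieved in isolation, i.e. that a small perturbation of $g$ removes every unwanted coincidence; this follows from the rigidity of the finite sets of characteristic axes involved.
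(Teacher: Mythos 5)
Your argument is correct, and it is organized differently from the paper's. The paper's proof is a one-line reduction: using the direct-union decomposition $\tetra= \biguplus_{i}\ZZ_{3}^{\avt_{i}} \biguplus_{j} \ZZ_{2}^{\aet_{j}}$, it writes $\Gamma=\tetra\cap(g\DD_{n}g^{-1})$ as the union of the piecewise intersections $\ZZ_{3}^{\avt_{i}}\cap(g\DD_{n}g^{-1})$ and $\ZZ_{2}^{\aet_{j}}\cap(g\DD_{n}g^{-1})$ and appeals to the already-computed clips $[\DD_{n}]\circledcirc[\ZZ_{m}]$. You instead prove two inclusions: an upper bound from the subgroup lattice of $\tetra\cong A_{4}$ intersected with the poset condition $[\Gamma]\preceq[\DD_{n}]$, and a lower bound by exhibiting explicit configurations of $g$. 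Both are valid, but your version is in one respect more complete: the class $[\DD_{2}]$ does not appear among the individual piecewise intersections (each of which is at most cyclic), and only arises when several pieces combine --- the three mutually orthogonal edge axes simultaneously matching $(Oz)$ and two orthogonal secondary axes of $\DD_{n}$, which requires $2\mid n$. The paper's reduction leaves this recombination step implicit, whereas you address it explicitly (existence of orthogonal secondary axes for even $n$, and the fact that the only subgroup of $\tetra$ strictly containing the Klein group is $\tetra$ itself, which cannot sit inside a dihedral group). Your checks that the $[\ZZ_{3}]$ configuration cannot accidentally pick up a secondary-axis coincidence (no edge axis is perpendicular to a vertex axis) and that the parity conditions $3\mid n$, $2\mid n$ are forced by the poset are also sound. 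In short, what the paper buys with its decomposition is brevity and reuse of earlier lemmas; what you buy is a self-contained and airtight treatment of precisely the case the terse proof glosses over.
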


\begin{proof}
  Let $\Gamma=\tetra\cap \left(g\DD_{n} g^{-1}\right)$. From decomposition~\eqref{dtetra}, we need only to consider intersections
  \begin{equation*}
    \ZZ_{3}^{\avt_{i}}\cap \left(g\DD_{n} g^{-1}\right) \text{ and } \ZZ_{2}^{\aet_{j}}\cap \left(g\DD_{n} g^{-1}\right)
  \end{equation*}
  which have already been studied (see Lemma~\ref{lem:Clis_Diedre_Cyclic}).
\end{proof}

\begin{lem}\label{lem:Clips_Tetra_Tetra}
  We have
  \begin{equation*}
    [\tetra]\circledcirc [\tetra] = \set{[\triv],[\ZZ_{2}],[\DD_{2}],[\ZZ_{3}],[\tetra]}.
  \end{equation*}
\end{lem}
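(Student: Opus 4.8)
The plan is to use that, for every $g\in\SO(3)$, the intersection $\tetra\cap g\tetra g^{-1}$ is a subgroup of $\tetra$. Since $\tetra\simeq A_4$, its subgroups fall into exactly five $\SO(3)$-conjugacy classes, namely $[\triv]$, $[\ZZ_2]$, $[\DD_2]$, $[\ZZ_3]$ and $[\tetra]$, where $[\DD_2]$ is the class of the unique Klein four-subgroup $\set{\Idd,\QQ(\aet_1;\pi),\QQ(\aet_2;\pi),\QQ(\aet_3;\pi)}$ generated by the order-two rotations about the edge axes. This yields at once the inclusion $[\tetra]\circledcirc[\tetra]\subseteq\set{[\triv],[\ZZ_2],[\DD_2],[\ZZ_3],[\tetra]}$, so the real content of the lemma is the reverse inclusion: for each of these five classes I must exhibit an explicit $g$ with $\tetra\cap g\tetra g^{-1}$ in that class. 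As noted in the text, such an intersection depends only on the relative position of the vertex ($3$-fold) and edge ($2$-fold) axes of the two copies, so it suffices to choose $g$ realizing the right incidence pattern and to read off the intersection from the decomposition~\eqref{dtetra}.

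I expect four of the five classes to be dispatched routinely. Taking $g=\Idd$ gives $\tetra\cap\tetra=\tetra$. A rotation $g=\QQ(\avt_1;\theta)$ about a vertex axis by a generic angle fixes the $3$-fold axis $\avt_1$, so $\ZZ_3^{\avt_1}$ survives, while genericity moves every other characteristic axis off the axes of $\tetra$; by Lemma~\ref{lem:intercy} nothing more is shared, giving $[\ZZ_3]$. Likewise $g=\QQ(\aet_1;\theta)$ about an edge axis by a generic $\theta$ keeps only $\ZZ_2^{\aet_1}$, giving $[\ZZ_2]$, and a generic $g$ aligns no characteristic axes, giving $[\triv]$. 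In each case the only verification is that the coincidences to be avoided occur for finitely many positions of $g$, so a generic choice suffices.

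The delicate case, and the one omitted in earlier versions, is $[\DD_2]$. Here I need $g\notin\octa$ for which the two copies share the \emph{whole} group $\set{\Idd,\QQ(\aet_1;\pi),\QQ(\aet_2;\pi),\QQ(\aet_3;\pi)}$ yet no $3$-fold rotation. The natural attempt is to position the second tetrahedron so that its three edge axes still coincide, as unoriented axes, with $\aet_1,\aet_2,\aet_3$ while its vertex axes are rotated away from those of $\tetra$, and then to confirm from~\eqref{dtetra} that the only common elements are $\Idd$ together with the three order-two rotations.

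This last step is where I expect the main obstacle to lie. The three edge axes form a single orthogonal frame, so demanding that all three order-two rotations be common is a very strong constraint on $g$; one must check with care both that a suitable $g$ exists outside $\octa$ realizing exactly this incidence and that no vertex axis of $g\tetra g^{-1}$ accidentally meets a vertex axis of $\tetra$, which would enlarge the intersection beyond $\DD_2$ or even recover the whole of $\tetra$. Controlling this balance precisely is the heart of the proof, and is presumably the point at which the earlier omission of $[\DD_2]$ arose.
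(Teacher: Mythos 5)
Your handling of $[\triv]$, $[\ZZ_{2}]$, $[\ZZ_{3}]$ and $[\tetra]$ coincides with the paper's proof (which likewise argues by matching no axis, exactly one edge axis, or exactly one vertex axis), and you are right that $[\DD_{2}]$ is the crux. But your proposal never actually exhibits a $g$ realizing $[\DD_{2}]$; it only describes what such a $g$ would have to do and flags the difficulty. That is a genuine gap, and pushing your own observation one step further shows the step cannot be completed. The only involutions of $\tetra\simeq A_{4}$ are the three rotations $\QQ(\aet_{i};\pi)$ about the mutually orthogonal edge axes, so $\tetra$ has a unique Klein four-subgroup $V=\set{\Idd,\QQ(\aet_{1};\pi),\QQ(\aet_{2};\pi),\QQ(\aet_{3};\pi)}$, and $\Gamma=\tetra\cap g\tetra g^{-1}$ contains a copy of $\DD_{2}$ only if $V\subset g\tetra g^{-1}$, i.e.\ only if $g$ maps the orthogonal frame $\set{\aet_{1},\aet_{2},\aet_{3}}$ onto itself. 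Such a $g$ is a signed permutation of the coordinate axes, hence lies in $\octa=N_{\SO(3)}(\tetra)$, so $g\tetra g^{-1}=\tetra$ and $\Gamma=\tetra$, not $\DD_{2}$. The ``very strong constraint'' you identify is therefore unsatisfiable outside the normalizer, and no choice of $g$ produces the class $[\DD_{2}]$.

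For comparison, the paper's proof disposes of this case by taking $g=\QQ\left(\kk;\frac{\pi}{2}\right)$; but that is a four-fold face rotation of the cube $\mathcal{C}_{0}$, hence an element of $\octa$ normalizing $\tetra$ (it sends $\mathcal{T}_{0}$ to the complementary tetrahedron inscribed in $\mathcal{C}_{0}$, whose rotation group is again $\tetra$), so it yields $\Gamma=\tetra$ rather than $\DD_{2}$. You should not try to patch your argument to recover $[\DD_{2}]$: the conclusion your line of reasoning actually supports is $[\tetra]\circledcirc[\tetra]=\set{[\triv],[\ZZ_{2}],[\ZZ_{3}],[\tetra]}$, and the discrepancy with the stated lemma must be resolved before the corresponding entry of Table~\ref{tab:SO3-clips} can be trusted. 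Note that $[\DD_{2}]$ \emph{is} attainable in $[\tetra]\circledcirc[\octa]$, since $\octa$ possesses two-fold axes other than the coordinate axes (for instance $(Oz)$, $(O;1,1,0)$, $(O;1,-1,0)$ form an orthogonal triple of two-fold axes), which is presumably the source of the confusion between the two clips operations.
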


\begin{proof}
  Let $\Gamma=\tetra\cap \left(g\tetra g^{-1}\right)$.
  \begin{enumerate}
    \item If no axes match, then $\Gamma=\triv$;
    \item if only one edge axis (resp. one face axis) from both configurations match, then $\Gamma\in [\ZZ_{2}]$ (resp. $[\ZZ_{3}]$);
    \item if $g=\QQ\left(\kk,\frac{\pi}{2}\right)$, then $\Gamma=\DD_{2}$.
  \end{enumerate}
\end{proof}

\subsection{Octahedral subgroup}

The group $\octa$ is the (orientation-preserving) symmetry group of the cube $\mathcal{C}_{0}$ (see~\autoref{fig:cube0}) with vertices
\begin{equation*}
  \{A_{i}\}_{i=1\cdots 8} = {(\pm1,\pm1,\pm1)}.
\end{equation*}
We have the decomposition (see~\cite{IG1984}):
\begin{equation}\label{eq:deccube}
  \octa=\biguplus_{i=1}^3 \ZZ_4^{\afc_{i}} \biguplus_{j=1}^4 \ZZ_{3}^{\avc_{j}} \biguplus_{l=1}^6 \ZZ_{2}^{\aec_{l}}
\end{equation}
with \emph{vertices}, \emph{edges} and \emph{faces axes} respectively denoted $\avc_{i}$, $\aec_{j}$ and $\afc_{j}$. For instance we have
\begin{equation*}
  \avc_{1} := (OA_{1}),\quad \aec_{1} := (OM_{1}),\quad \afc_{1} := (OI).
\end{equation*}

As an application of decomposition~\eqref{eq:deccube} and Lemma~\ref{lem:intercy}, we obtain the following corollary.

\begin{cor}
  Let $n\geq 2$ be some integer. Set
  \begin{equation*}
    d_{2}(n) = \gcd(n,2), \quad d_{3}(n) = \gcd(n,3),
  \end{equation*}
  and
  \begin{equation*}
    d_4(n)=
    \begin{cases}
      4 \text{ if } 4 \text{ divide } n, \\
      1 \text{ otherwise}.
    \end{cases}
  \end{equation*}
  Then, we have
  \begin{equation*}
    [\ZZ_{n}]\circledcirc [\octa] = \set{[\triv],[\ZZ_{d_{2}(n)}],[\ZZ_{d_{3}(n)}],[\ZZ_{d_4(n)}]}.
  \end{equation*}
\end{cor}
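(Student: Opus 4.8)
The plan is to follow verbatim the strategy used for \autoref{lem:Clips_Tetra_Cyclic} (the tetrahedral case), since $\octa$ admits an entirely analogous direct-union decomposition into cyclic subgroups. I would begin by fixing an arbitrary axis $a$ and considering the intersection $\octa \cap \ZZ_{n}^{a}$. Invoking the decomposition \eqref{eq:deccube},
\begin{equation*}
  \octa=\biguplus_{i=1}^3 \ZZ_4^{\afc_{i}} \biguplus_{j=1}^4 \ZZ_{3}^{\avc_{j}} \biguplus_{l=1}^6 \ZZ_{2}^{\aec_{l}},
\end{equation*}
and the fact that $\ZZ_{n}^{a}$ is cyclic with a single rotation axis, the intersection is nontrivial only when $a$ coincides with one of the characteristic axes of the cube. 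This splits the problem into three subcases, according to whether $a$ is a face axis (order $4$), a vertex axis (order $3$), or an edge axis (order $2$).

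In each subcase the computation reduces to a clips of cyclic groups already settled by \autoref{lem:intercy}, namely
\begin{equation*}
  [\ZZ_4]\circledcirc [\ZZ_{n}],\qquad [\ZZ_{3}]\circledcirc [\ZZ_{n}],\qquad [\ZZ_{2}]\circledcirc [\ZZ_{n}],
\end{equation*}
yielding respectively $\set{[\triv],[\ZZ_{\gcd(4,n)}]}$, $\set{[\triv],[\ZZ_{\gcd(3,n)}]}$ and $\set{[\triv],[\ZZ_{\gcd(2,n)}]}$. The second and third immediately produce the classes $[\ZZ_{d_{3}(n)}]$ and $[\ZZ_{d_{2}(n)}]$, together with the always-present class $[\triv]$ coming from non-matching axes.

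The only point requiring care, and the step I expect to be the main obstacle, is reconciling the face-axis contribution $[\ZZ_{\gcd(4,n)}]$ with the notation $[\ZZ_{d_4(n)}]$ appearing in the statement. Here I would argue by cases on $n$ modulo $4$: if $4\mid n$ then $\gcd(4,n)=4=d_4(n)$, so the genuinely new class $[\ZZ_4]$ is recorded; if $n$ is odd then $\gcd(4,n)=1$, contributing only $[\triv]$; and if $n\equiv 2 \pmod 4$ then $\gcd(4,n)=2$, but this class is already present as $[\ZZ_{d_{2}(n)}]$ since $d_{2}(n)=2$ in that case. Thus the face-axis contribution adds nothing beyond $[\triv]$ and $[\ZZ_{d_{2}(n)}]$ unless $4\mid n$, which is precisely what the definition of $d_4(n)$ encodes. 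Collecting the four possible classes $[\triv]$, $[\ZZ_{d_{2}(n)}]$, $[\ZZ_{d_{3}(n)}]$ and $[\ZZ_{d_4(n)}]$ then gives the claimed equality.
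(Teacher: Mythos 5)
Your argument is correct and is exactly the paper's route: the paper derives this corollary in one line as ``an application of decomposition~\eqref{eq:deccube} and Lemma~\ref{lem:intercy}'', which is precisely your reduction of $\octa\cap\ZZ_{n}^{a}$ to the three cyclic clips $[\ZZ_4]\circledcirc[\ZZ_{n}]$, $[\ZZ_{3}]\circledcirc[\ZZ_{n}]$, $[\ZZ_{2}]\circledcirc[\ZZ_{n}]$ along the face, vertex and edge axes. Your explicit check that the case $n\equiv 2\pmod 4$ contributes only $[\ZZ_{2}]=[\ZZ_{d_{2}(n)}]$, so that the face-axis term is correctly encoded by $d_4(n)$, is a detail the paper leaves implicit but is handled correctly.
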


\begin{cor}
  Let $n\geq 2$ be some integer. Set
  \begin{equation*}
    d_{2}(n) = \gcd(n,2), \quad d_{3}(n) = \gcd(n,3),
  \end{equation*}
  and
  \begin{equation*}
    d_4(n)=
    \begin{cases}
      4 \text{ if } 4\mid n, \\
      1 \text{ otherwise}.
    \end{cases}
  \end{equation*}
  Then, we have
  \begin{equation*}
    [\DD_{n}]\circledcirc[\octa] = \set{[\triv],[\ZZ_{2}],[\ZZ_{d_{3}(n)}],[\ZZ_{d_4(n)}],[\DD_{d_{2}(n)}],[\DD_{d_{3}(n)}],[\DD_{d_4(n)}]}.
  \end{equation*}
\end{cor}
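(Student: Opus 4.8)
The plan is to exploit the direct-union decomposition of $\octa$ given in~\eqref{eq:deccube}, exactly as was done for the tetrahedral cases. Writing $\Gamma=\DD_{n}\cap\left(g\octa g^{-1}\right)$ and intersecting with each factor, the computation of $[\DD_{n}]\circledcirc[\octa]$ reduces to understanding how a conjugate of $\DD_{n}$ meets each of the cyclic factors $\ZZ_4^{\afc_{i}}$, $\ZZ_{3}^{\avc_{j}}$, $\ZZ_{2}^{\aec_{l}}$ sitting along the face, vertex and edge axes of the cube. Since $\DD_{n}=\ZZ_{n}\biguplus_j\ZZ_2^{b_j}$ by~\eqref{diedre}, I would first observe that the purely cyclic part of the intersection is already governed by $[\ZZ_{n}]\circledcirc[\octa]$ from the previous corollary, which supplies the classes $[\triv],[\ZZ_{2}],[\ZZ_{d_{3}(n)}],[\ZZ_{d_4(n)}]$; the new phenomena come only from the secondary (order-two) axes of $\DD_{n}$, which can produce genuinely dihedral intersections.

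Concretely, I would go axis-type by axis-type. The key question is when the primary $n$-fold axis of $g\octa g^{-1}$ (i.e.\ one of the cube's face, vertex or edge axes, carrying rotations of order $4$, $3$, $2$ respectively) can be aligned with the primary axis of $\DD_{n}$ while \emph{simultaneously} one of the secondary axes of $\DD_{n}$ lands on another rotation axis of the cube. When the primary axes coincide along a face axis one hopes for $\DD_{d_4(n)}$, along a vertex axis for $\DD_{d_3(n)}$, and along an edge axis for $\DD_{d_2(n)}$; in each case the dihedral class survives precisely when a secondary order-two axis of $\DD_{n}$ can be matched with one of the cube's own order-two axes, which is a finite check of the relative geometry using that the face, vertex and edge axes of the cube have explicitly known mutual angles. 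I would verify, for instance, that two of the edge axes $\aec_l$ are mutually orthogonal and lie in a common plane with a face axis, so that $\DD_{d_2(n)}$ is attainable, and likewise that the three face axes are mutually orthogonal, yielding $\DD_{d_4(n)}$ when $4\mid n$.

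The main obstacle, as in Lemma~\ref{lem:Clips_Tetra_Tetra}, is exhaustiveness: I must be sure no stray coincidence of axes produces a class outside the stated list, and conversely that each listed class is actually realized by some explicit $g\in\SO(3)$. The dihedral classes $[\DD_{d_3(n)}]$ and $[\DD_{d_4(n)}]$ are the delicate ones, since they require not just an axis alignment but the matching of a full order-two secondary axis, so I would exhibit a concrete rotation $g$ realizing each and check that the complementary cases collapse to the already-present cyclic classes $[\triv],[\ZZ_2],[\ZZ_{d_3(n)}],[\ZZ_{d_4(n)}]$. Once every factor-intersection has been classified and cross-checked against the realizability of each candidate, the union over the finitely many geometric configurations yields exactly
\begin{equation*}
  \set{[\triv],[\ZZ_{2}],[\ZZ_{d_{3}(n)}],[\ZZ_{d_4(n)}],[\DD_{d_{2}(n)}],[\DD_{d_{3}(n)}],[\DD_{d_4(n)}]},
\end{equation*}
completing the proof.
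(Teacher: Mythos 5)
Your proposal is correct and follows essentially the same route as the paper: decompose $\octa$ via the direct union~\eqref{eq:deccube} into cyclic factors along the face, vertex and edge axes, reduce each factor intersection to the already-computed clips of Lemma~\ref{lem:Clis_Diedre_Cyclic}, and then discuss which simultaneous axis coincidences assemble these pieces into the dihedral classes. In fact your write-up supplies more of the geometric bookkeeping (realizability of $[\DD_{d_2(n)}],[\DD_{d_3(n)}],[\DD_{d_4(n)}]$ and exhaustiveness) than the paper's one-line proof makes explicit.
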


\begin{proof}
  Using decomposition~\eqref{eq:deccube}, we have to consider intersections
  \begin{equation*}
    \DD_{n}\cap \left(g\ZZ_4^{\afc_{i}}g^{-1}\right),\quad \DD_{n}\cap \left(g\ZZ_{3}^{\avc_{j}}g^{-1}\right),\quad\DD_{n}\cap \left(g\ZZ_{2}^{\aec_{l}}g^{-1}\right)
  \end{equation*}	
  which have already been studied in Lemma~\ref{lem:Clis_Diedre_Cyclic}.
\end{proof}

\begin{lem}\label{lem:Clips_Octa_Tetra}
  We have
  \begin{equation*}
    [\tetra]\circledcirc [\octa]=\set{[\triv],[\ZZ_{2}],[\DD_{2}],[\ZZ_{3}],[\tetra]}.
  \end{equation*}
\end{lem}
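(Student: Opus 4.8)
The plan is to study the intersection $\Gamma = \tetra \cap (g\octa g^{-1})$ as $g$ ranges over $\SO(3)$, using that every nontrivial element of $\Gamma$ must be a rotation about one of the characteristic axes of $\tetra$, that is, a $3$-fold vertex axis $\avt_i$ or a $2$-fold edge axis $\aet_j$ from the decomposition~\eqref{dtetra}. By~\eqref{eq:deccube} such an element must simultaneously lie on a face, vertex or edge axis of $g\octa g^{-1}$; since $\tetra$ contains no rotation of order $4$, only orders $2$ and $3$ can survive, so the $4$-fold face axes of $\octa$ contribute solely through their squares. The first key remark is purely group-theoretic: $\Gamma$ is always a subgroup of $\tetra\cong A_4$, and the conjugacy classes of subgroups of $A_4$ are exactly those of $\triv$, $\ZZ_2$, $\ZZ_3$, $\DD_2$ (the Klein four-group of three mutually perpendicular $2$-fold axes) and $\tetra$ itself. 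In particular $A_4$ has no subgroup of order $6$, which already forbids $[\DD_3]$ and confines $[\Gamma]$ to the announced list, giving the inclusion $\subseteq$ for free.

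It then remains to realize each of the five classes by an explicit $g$. First I would note that, since the tetrahedron $\mathcal{T}_0$ is inscribed in the cube $\mathcal{C}_0$, we have $\tetra \subset \octa$, so $g = \Idd$ yields $\Gamma = \tetra$ and the class $[\tetra]$. Choosing $g$ generic, so that no axis of $g\octa g^{-1}$ meets an axis of $\tetra$, gives $\Gamma = \triv$; aligning a single edge axis $\aet_j$ with a $2$-fold axis of $g\octa g^{-1}$ and nothing else gives $\Gamma \in [\ZZ_2]$, while aligning a single vertex axis $\avt_i$ with a $3$-fold vertex axis of $g\octa g^{-1}$ gives $\Gamma \in [\ZZ_3]$; in both cases the reduction to the cyclic computation of Lemma~\ref{lem:intercy} confirms the order.

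The delicate class is $[\DD_2]$. Here I would take $g = \QQ\left(\kk,\frac{\pi}{4}\right)$: a rotation of the cube by $\frac{\pi}{4}$ about the $z$-axis sends the two edge axes in the directions $(1,\pm1,0)$ onto the coordinate axes $Oy$ and $Ox$, while fixing the face axis $Oz$, so that all three edge axes $\aet_1,\aet_2,\aet_3$ of $\tetra$ become $2$-fold axes of $g\octa g^{-1}$ and the whole Klein four-group $\DD_2$ is common. One then checks that the four vertex axes $\avt_i$ are not preserved, since the body diagonals of the rotated cube no longer point along $(\pm1,\pm1,\pm1)$, so no $3$-fold rotation survives and $\Gamma = \DD_2$ exactly. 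Note that this differs from the analogous $[\tetra]\circledcirc[\tetra]$ computation, where the angle $\frac{\pi}{2}$ was used, because a $\frac{\pi}{2}$ rotation about $z$ leaves $\octa$ globally invariant and would return $\tetra$ rather than $\DD_2$.

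I expect the main obstacle to be this $[\DD_2]$ case: one must verify simultaneously that the three perpendicular $2$-fold axes align under $g = \QQ\left(\kk,\frac{\pi}{4}\right)$ and that no $3$-fold axis does, so that the intersection is neither only $\ZZ_2$ nor accidentally all of $\tetra$. The group-theoretic constraint from the first step makes the inclusion $\subseteq$ essentially automatic, so that the only genuine content is the explicit geometric realization of each of the five classes, with $[\DD_2]$ the one requiring care.
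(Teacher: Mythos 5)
Your proof is correct and follows essentially the same route as the paper: reduce to intersections of the characteristic-axis cyclic subgroups via the decompositions \eqref{dtetra} and \eqref{eq:deccube}, and realize $[\DD_{2}]$ with $g=\QQ\left(\kk,\frac{\pi}{4}\right)$. Your explicit upper bound via the subgroup lattice of $A_4$ (no subgroup of order $6$, so $[\DD_3]$ is excluded) and your check that no $3$-fold axis survives for that $g$ simply make explicit what the paper's terser argument leaves implicit.
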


\begin{proof}
  Let $\Gamma=\octa\cap \left(g\tetra g^{-1}\right)$. From decompositions~\eqref{dtetra}--\eqref{eq:deccube} and Lemma~\ref{lem:intercy}, we only have to consider intersections
  \begin{equation*}
    \ZZ_{4}^{\afc_{i}}\cap \left(g\ZZ_{2}^{\aet_{j}}g^{-1}\right),\quad \ZZ_{3}^{\avc_{j}}\cap \left(g\ZZ_{3}^{\avt_{i}}g^{-1}\right),\quad \ZZ_{2}^{\aec_{l}}\cap \left(g\ZZ_{2}^{\aet_{j}}g^{-1}\right).
  \end{equation*}
  Now, we always can find $g$ such that the intersection $\Gamma$ reduces to some subgroup conjugate to $\triv,\ZZ_{2}$ or $\ZZ_{3}$ and taking $g=\QQ\left(\kk,\frac{\pi}{4}\right)$, we get that $\Gamma$ is conjugate to $\DD_{2}$, which achieves the proof.
\end{proof}

\begin{lem}
  We have
  \begin{equation*}
    [\octa]\circledcirc [\octa]=\set{[\triv],[\ZZ_{2}],[\DD_{2}],[\ZZ_{3}],[\DD_{3}],[\ZZ_4],[\DD_4],[\octa]}.
  \end{equation*}
\end{lem}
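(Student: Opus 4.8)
The plan is to proceed exactly as in the previous lemmas, exploiting the direct union decomposition \eqref{eq:deccube} of $\octa$ into cyclic subgroups about its face, vertex and edge axes. Writing $\Gamma = \octa \cap \left(g\octa g^{-1}\right)$ for $g \in \SO(3)$, decomposition \eqref{eq:deccube} reduces $\Gamma$ to a union of pairwise intersections of cyclic subgroups $\ZZ_p^{a} \cap \left(g\ZZ_q^{b}g^{-1}\right)$, each of which is governed by Lemma~\ref{lem:intercy}: such an intersection is trivial unless the two axes coincide, in which case it is cyclic of order $\gcd(p,q)$. Consequently every nontrivial element of $\Gamma$ is a rotation whose axis is simultaneously a characteristic axis of both cubes, so that $\Gamma$ is determined by the configuration of shared characteristic axes. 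In particular $\Gamma$ is a subgroup of $\octa$.

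The key simplification is therefore that $\Gamma$, being a subgroup of $\octa$, is $\SO(3)$-conjugate to one of the nine classes $[\triv], [\ZZ_2], [\ZZ_3], [\ZZ_4], [\DD_2], [\DD_3], [\DD_4], [\tetra], [\octa]$ exhausting the conjugacy classes of subgroups of $\octa\cong\mathrm{S}_4$; it only remains to decide which of them actually occur. First I would realize the cyclic classes by rotating the second cube through a generic angle about a single face, vertex or edge axis, so that only that axis is shared: this yields $[\ZZ_4]$, $[\ZZ_3]$ and $[\ZZ_2]$ respectively, while a fully generic $g$ shares no axis and gives $[\triv]$. The class $[\octa]$ occurs precisely when $g \in N_{\SO(3)}(\octa) = \octa$, in which case $g\octa g^{-1} = \octa$.

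For the dihedral classes I would argue through normalizers. The subgroup $\DD_4 \subset \octa$ (the stabilizer of a face axis) is maximal and satisfies $N_{\SO(3)}(\DD_4) = \DD_8$; choosing $g = \QQ\!\left(\kk;\tfrac{\pi}{4}\right) \in \DD_8 \setminus \octa$ normalizes $\DD_4$, so $\Gamma \supseteq \DD_4$, and maximality together with $g \notin \octa$ forces $\Gamma = \DD_4$. Likewise $\DD_3 \subset \octa$ (the stabilizer of a vertex axis $\avc$) is maximal with $N_{\SO(3)}(\DD_3) = \DD_6$, and $g = \QQ\!\left(\avc;\tfrac{\pi}{3}\right) \in \DD_6 \setminus \octa$ gives $\Gamma = \DD_3$. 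The class $[\DD_2]$ is the delicate one historically omitted: here one must place the two cubes so that they share an orthogonal triple of $2$-fold axes consisting of one face axis and two edge axes, matched so that the face axis of the first cube meets an edge axis of the second and vice versa; then along each shared axis only the $180^\circ$ rotation is common, while no $4$-fold or $3$-fold axis is shared, so that $\Gamma = \DD_2$. One must check by hand that such a relative position of the two cubes exists.

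The main obstacle is to prove that $[\tetra]$ never occurs, cutting the list from nine to eight. I would use that $\tetra$ is normal of index $2$ in $\octa$ (hence the unique subgroup of $\octa$ in its $\SO(3)$-class) and that $N_{\SO(3)}(\tetra) = \octa$. Then $\tetra \subseteq \Gamma \subseteq g\octa g^{-1}$ would force $g^{-1}\tetra g \subseteq \octa$, i.e. $g^{-1}\tetra g = \tetra$ and $g \in N_{\SO(3)}(\tetra) = \octa$, whence $g\octa g^{-1} = \octa$ and $\Gamma = \octa$. Geometrically this is the statement that sharing all four vertex ($3$-fold) axes forces the two cubes to coincide, so $\tetra$ cannot arise as a proper intersection. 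Combining the realizations above with this exclusion yields exactly the eight announced classes.
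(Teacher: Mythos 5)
Your proposal is correct and follows the same overall strategy as the paper's proof: observe that $\Gamma=\octa\cap\left(g\octa g^{-1}\right)$ is a subgroup of $\octa$, so that $[\Gamma]$ lies among the nine conjugacy classes of subgroups of $\octa$, then realize every class except $[\tetra]$ and rule that one out. Where you differ is in the mechanism. The paper exhibits an explicit rotation $g$ for each class (e.g.\ $g=\QQ\left(\ii;\frac{\pi}{4}\right)$ for $[\DD_4]$, $g=\QQ(\avc_{1},\pi)$ for $[\DD_3]$, and the geometric observation that a shared tetrahedral subgroup fixes the three edge axes and hence the whole cube), whereas you replace several of these verifications by normalizer arguments: $N_{\SO(3)}(\DD_4)=\DD_8$, $N_{\SO(3)}(\DD_3)=\DD_6$, and $N_{\SO(3)}(\tetra)=\octa$ combined with the uniqueness of $\tetra$ as the index-two subgroup of $\octa$. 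Your algebraic exclusion of $[\tetra]$ is arguably cleaner and more self-contained than the paper's one-line geometric claim, and the maximality-plus-normalizer route to $[\DD_3]$ and $[\DD_4]$ is a genuine (if minor) improvement in rigor. The one place where you fall short is $[\DD_2]$: you correctly describe the required configuration (an orthogonal triple of shared $2$-fold axes mixing a face axis with edge axes, matched crosswise) but explicitly leave the existence of such a relative position of the two cubes as an unchecked assertion. Since $[\DD_2]$ is precisely the class that was omitted in earlier published computations of the related clips $[\tetra]\circledcirc[\tetra]$ and $[\tetra]\circledcirc[\octa]$, this verification is the one step that should not be waved away; the paper settles it by taking $g=\QQ\left(\kk;\frac{\pi}{4}\right)\circ\QQ\left(\ii;\frac{\pi}{4}\right)$ and checking $g\afc_{3}=\aec_{6}$, $g\aec_{1}=\afc_{1}$, $g\aec_{2}=\aec_{5}$, so that every shared axis carries only an order-two common rotation.
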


\begin{proof}
  Consider the subgroup $\Gamma=\octa\cap \left(g\octa g^{-1}\right)\subset \octa$. From the poset in \autoref{fig:SO3-lattice}, we deduce that the conjugacy class $[\Gamma]$ belong to the following list
  \begin{equation*}
    \set{[\triv],[\ZZ_{2}],[\DD_{2}],[\ZZ_{3}],[\DD_{3}],[\ZZ_4],[\DD_4],[\tetra],[\octa] }.
  \end{equation*}
  Now:
  \begin{enumerate}
    \item if $g$ fixes only one edge axis (resp. one vertex axis), then $\Gamma\in[\ZZ_{2}]$ (resp. $\Gamma\in[\ZZ_{3}]$);
    \item if $g=\displaystyle{\QQ\left( \ii;\frac{\pi}{6}\right)}$, only one face axis is fixed by $g$ and $\Gamma\in[\ZZ_4]$;
    \item if $g=\displaystyle{\QQ\left( \ii;\frac{\pi}{4}\right)}$, $\Gamma \supset\ZZ_4^{\ii}\uplus \ZZ_{2}^{\kk}$ and thus $\Gamma \in[\DD_4]$;
    \item if $g=\displaystyle{\QQ\left( \kk;\frac{\pi}{4}\right) \circ \QQ\left( \ii;\frac{\pi}{4}\right)}$, $\Gamma\in[\DD_{2}]$ with characteristic axes $g\afc_{3}=\aec_6$, $g\aec_{1}=\afc_{1}$ and  $g\aec_{2}=\aec_{5}$;
    \item if $g=\QQ(\avc_{1},\pi)$, $\Gamma\in[\DD_{3}]$ with $\avc_{1}$ as the primary axis and $\aec_{5}$ as the secondary axis of $\Gamma$;
    \item if $\Gamma\supset \tetra$, then $g$ fixes the three edge axes of the tetrahedron, and $g$ fix the cube. In that case, $\Gamma=\octa$.
  \end{enumerate}
\end{proof}

\subsection{Icosahedral subgroup}

The group $\ico$ is the (orientation-preserving) symmetry group of the dodecahedron $\mathcal{D}_{0}$ (\autoref{fig:dode}), where we have
\begin{itemize}
  \item Twelve vertices: $(\pm \phi,\pm \phi^{-1},0),(\pm \phi^{-1},0,\pm \phi),(0,\pm \phi,\pm \phi^{-1})$, $\phi$ being the gold number.
  \item Eight vertices: $(\pm 1,\pm 1,\pm 1)$ of a cube.
\end{itemize}

\begin{figure}[h]
  \centering
  \includegraphics[scale=1.5]{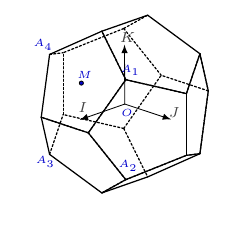}
  \caption{Dodecahedron $\mathcal{D}_{0}$}
  \label{fig:dode}
\end{figure}

We thus have the decomposition
\begin{equation}\label{eq:dico}
  \ico=\biguplus_{i=1}^6\ZZ_{5}^{\afd_{i}}\biguplus_{j=1}^{10}\ZZ_{3}^{\avd_{j}}\biguplus_{l=1}^{15}\ZZ_{2}^{\aed_{l}}
\end{equation}
with \emph{vertices}, \emph{edges} and \emph{faces axes} respectively denoted $\avd_{i}$, $\aed_{j}$ and $\afd_{j}$. For instance we have
\begin{equation*}
  \avd_{1}:=(OA_{1}),\quad \aed_{1}:=(OI),\quad \afd_{1}:=(OM)
\end{equation*}
where $M$ is the center of some face.

From decomposition \eqref{eq:dico} and from Lemma~\ref{lem:intercy} we obtain the following corollary.

\begin{cor}
  Let $n\geq 2$ be some integer. Set
  \begin{equation*}
    d_{2}:=\gcd(n,2)\:; \: d_{3}:=\gcd(n,3)\:; \: d_{5}:=\gcd(n,5).
  \end{equation*}
  Then, we have
  \begin{equation*}
    [\ZZ_{n}]\circledcirc [\ico]=\set{[\triv],[\ZZ_{d_{2}}],[\ZZ_{d_{3}}],[\ZZ_{d_{5}}]}.
  \end{equation*}
\end{cor}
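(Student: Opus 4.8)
The plan is to follow the same pattern as the tetrahedral and octahedral corollaries, computing the intersections $\ico\cap\ZZ_{n}^{a}$ for an arbitrary axis $a$ and reading off the resulting conjugacy classes. By definition of the clips operation, every element of $[\ZZ_{n}]\circledcirc[\ico]$ is the class of some such intersection, so it suffices to determine all of them up to conjugacy.

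First I would use the direct union decomposition~\eqref{eq:dico}: every nontrivial rotation of $\ico$ belongs to exactly one of the cyclic factors $\ZZ_{5}^{\afd_{i}}$, $\ZZ_{3}^{\avd_{j}}$ or $\ZZ_{2}^{\aed_{l}}$. Since every nontrivial element of $\ZZ_{n}^{a}$ is a rotation about the single axis $a$, a nontrivial element of $\ico\cap\ZZ_{n}^{a}$ must be a rotation whose axis is simultaneously $a$ and a characteristic axis of the dodecahedron. Hence, unless $a$ is a face, vertex or edge axis, the intersection is trivial and contributes $[\triv]$.

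Then, for each family of characteristic axes I would invoke Lemma~\ref{lem:intercy}, which handles the intersection of two coaxial cyclic groups. When $a=\afd_{i}$ is a face axis, the only elements of $\ico$ about $a$ are those of $\ZZ_{5}^{a}$, so $\ico\cap\ZZ_{n}^{a}=\ZZ_{5}^{a}\cap\ZZ_{n}^{a}=\ZZ_{d_{5}}^{a}$; when $a=\avd_{j}$ is a vertex axis we similarly obtain $\ZZ_{d_{3}}$, and when $a=\aed_{l}$ is an edge axis we obtain $\ZZ_{d_{2}}$. These are the only nontrivial intersections that can arise.

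Finally I would verify that all four classes are attained and that no further class appears. Taking $a$ in generic position yields $[\triv]$, while aligning $a$ with a face, vertex, or edge axis realizes $[\ZZ_{d_{5}}]$, $[\ZZ_{d_{3}}]$ and $[\ZZ_{d_{2}}]$ respectively. No dihedral or exceptional class can occur, since $\ico\cap\ZZ_{n}^{a}$ is a subgroup of the cyclic group $\ZZ_{n}^{a}$ and is therefore itself cyclic. There is essentially no obstacle here: the whole argument is a direct consequence of the decomposition~\eqref{eq:dico} together with Lemma~\ref{lem:intercy}, exactly as in the tetrahedral and octahedral cases; the only point requiring minor care is confirming that the three characteristic-axis cases are genuinely realizable and remain distinct for general $n$.
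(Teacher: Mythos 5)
Your proposal is correct and follows essentially the same route as the paper, which likewise derives this corollary directly from the direct-union decomposition~\eqref{eq:dico} of $\ico$ into cyclic subgroups about its characteristic axes together with Lemma~\ref{lem:intercy}. The only difference is that you spell out the case analysis that the paper leaves implicit.
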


Using once again decomposition~\eqref{eq:dico} and clips operation $[\DD_{n}]\circledcirc[\ZZ_{m}]$ in Lemma~\ref{lem:Clis_Diedre_Cyclic} we get the following corollary.

\begin{cor}
  Let $n\geq 2$ be some integer. Set
  \begin{equation*}
    d_{2}:=\gcd(n,2)\:; \: d_{3}:=\gcd(n,3)\:; \: d_{5}:=\gcd(n,5).
  \end{equation*}
  Then, we have
  \begin{equation*}
    [\DD_{n}]\circledcirc [\ico] = \set{[\triv],[\ZZ_{2}],[\ZZ_{d_{3}}],[\ZZ_{d_{5}}],[\DD_{d_{2}}],[\DD_{d_{3}}],[\DD_{d_{5}}]}.
  \end{equation*}
\end{cor}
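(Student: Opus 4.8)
The plan is to read the clips through the intersection $\Gamma := \DD_{n}\cap\left(g\ico g^{-1}\right)$ for $g\in\SO(3)$, using the commutativity of $\circledcirc$ so that $\Gamma$ may be viewed simultaneously as a subgroup of $\DD_{n}$ and of a conjugate of $\ico$. Since $\Gamma\subset\DD_{n}$, it is cyclic or dihedral; since $\Gamma$ is also contained in an icosahedral group, every rotation it contains has order in $\set{1,2,3,5}$, and so its class lies among $[\triv],[\ZZ_{2}],[\ZZ_{3}],[\ZZ_{5}],[\DD_{2}],[\DD_{3}],[\DD_{5}]$. The divisibility conditions of the statement are then forced by $\Gamma\subset\DD_{n}$: a subgroup of $\DD_{n}$ whose primary axis carries an order-$k$ rotation requires $k\mid n$, which converts $\ZZ_{3},\DD_{3}$ into $[\ZZ_{d_{3}}],[\DD_{d_{3}}]$ and $\ZZ_{5},\DD_{5}$ into $[\ZZ_{d_{5}}],[\DD_{d_{5}}]$, while a copy of $\DD_{2}$ inside $\DD_{n}$ needs two perpendicular secondary axes and hence $n$ even, giving $[\DD_{d_{2}}]$.

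Next I would isolate the cyclic contributions via the direct-union decomposition \eqref{eq:dico}. Because $\ico$ is the direct union of the cyclic factors $\ZZ_{5}^{\afd_{i}}$, $\ZZ_{3}^{\avd_{j}}$ and $\ZZ_{2}^{\aed_{l}}$, every nontrivial element of $g\ico g^{-1}$ lies in exactly one conjugated factor, so the nontrivial elements of $\Gamma$ are exactly those of the intersections $\DD_{n}\cap(g\ZZ_{5}^{\afd_{i}}g^{-1})$, $\DD_{n}\cap(g\ZZ_{3}^{\avd_{j}}g^{-1})$ and $\DD_{n}\cap(g\ZZ_{2}^{\aed_{l}}g^{-1})$. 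Each of these has already been computed in Lemma~\ref{lem:Clis_Diedre_Cyclic}, and taken one at a time they yield precisely the cyclic classes $[\triv]$, $[\ZZ_{2}]$, $[\ZZ_{d_{3}}]$ and $[\ZZ_{d_{5}}]$; the class $[\ZZ_{2}]$ always occurs because, for $n\ge 2$, a secondary axis of $\DD_{n}$ can be aligned with one of the fifteen edge axes of the dodecahedron.

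The remaining work, and what I expect to be the main obstacle, is the realisation of the three dihedral classes, which appear exactly when several cyclic pieces combine into a single dihedral group. Here I would argue geometrically: around a face (order $5$) axis of $\ico$ the five perpendicular edge axes generate a $\DD_{5}$, around a vertex (order $3$) axis the three perpendicular edge axes generate a $\DD_{3}$, and $\ico$ also contains a $\DD_{2}$ carried by three mutually perpendicular edge axes. Choosing $g$ so that the primary axis of $\DD_{n}$ coincides with such an axis and one secondary axis of $\DD_{n}$ meets one of the associated perpendicular edge axes then produces $\Gamma\in[\DD_{d_{5}}]$ (when $5\mid n$), $\Gamma\in[\DD_{d_{3}}]$ (when $3\mid n$), and $\Gamma\in[\DD_{d_{2}}]$ (when $n$ is even).

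The delicate point throughout is the \emph{compatibility of the $2$-fold axes}: one must check that the configuration of edge axes of the dodecahedron orthogonal to a chosen higher-order axis can be matched to the secondary-axis pattern of $\DD_{n}$, so that the intersection is genuinely dihedral rather than merely the cyclic group on the primary axis, and conversely that no larger or differently-typed subgroup can occur. This last exclusion is precisely what the subgroup inventory of $\ico$ together with the constraint $\Gamma\subset\DD_{n}$ guarantees, since $\DD_{n}$ admits no $\tetra$, no $\ico$, and no $\ZZ_{4}$-type subgroups. Collecting the cyclic classes of the second step with the dihedral classes of the third then gives exactly the announced list $\set{[\triv],[\ZZ_{2}],[\ZZ_{d_{3}}],[\ZZ_{d_{5}}],[\DD_{d_{2}}],[\DD_{d_{3}}],[\DD_{d_{5}}]}$.
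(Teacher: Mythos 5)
Your proposal is correct and follows essentially the same route as the paper, which derives this corollary in one line from the direct-union decomposition \eqref{eq:dico} of $\ico$ into cyclic factors together with the clips operation $[\DD_{n}]\circledcirc[\ZZ_{m}]$ of Lemma~\ref{lem:Clis_Diedre_Cyclic}. You in fact supply more detail than the paper does, notably the upper bound via the subgroup inventory of $\ico$ and the explicit geometric realisation of $[\DD_{d_{2}}]$, $[\DD_{d_{3}}]$, $[\DD_{d_{5}}]$ by combining a higher-order axis with perpendicular edge axes of the dodecahedron.
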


\begin{lem}\label{lem:Ico_Tetra}
  We have
  \begin{equation*}
    [\ico]\circledcirc [\tetra]=\set{[\triv],[\ZZ_{2}],[\ZZ_{3}],[\tetra]}.
  \end{equation*}
\end{lem}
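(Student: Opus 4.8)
The plan is to identify the conjugacy class of $\Gamma := \ico \cap (g\tetra g^{-1})$ as $g$ ranges over $\SO(3)$, exactly as in the previous exceptional cases. First I would record that $\Gamma$ is a subgroup of $g\tetra g^{-1}$, which is conjugate to $\tetra$; hence, up to conjugacy, $\Gamma$ is a subgroup of the tetrahedral group, and by inspection of the part of the poset below $[\tetra]$ in \autoref{fig:SO3-lattice} the only candidates are
\[
[\triv],\quad [\ZZ_{2}],\quad [\ZZ_{3}],\quad [\DD_{2}],\quad [\tetra].
\]
Equivalently, combining the decompositions~\eqref{eq:dico} and~\eqref{dtetra} with Lemma~\ref{lem:intercy}, the only characteristic axes of $\ico$ and of $\tetra$ whose rotation orders share a common factor are the order-two (edge) axes and the order-three (vertex) axes, so a single axis match can only produce $[\ZZ_{2}]$ or $[\ZZ_{3}]$, while the larger classes must arise from several simultaneous matches. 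In particular $[\ZZ_{5}]$, $[\DD_{3}]$ and $[\DD_{5}]$ are excluded from the start, since $\tetra$ contains no order-five rotation and no copy of $\DD_{3}$.

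Next I would exhibit each of the four classes claimed. For $[\triv]$ it suffices to take $g$ generic, so that no characteristic axis of $g\tetra g^{-1}$ coincides with an axis of $\ico$. For $[\ZZ_{3}]$ I would align a single vertex axis $\avt_{i}$ of $\tetra$ with a vertex axis $\avd_{j}$ of $\ico$ and then rotate about this common axis by a generic angle; since the only subgroups of $\tetra$ containing a fixed $\ZZ_{3}$ are $\ZZ_{3}$ and $\tetra$ itself, this residual freedom forces $\Gamma = \ZZ_{3}$. The class $[\ZZ_{2}]$ is obtained the same way, aligning one edge axis $\aet_{j}$ of $\tetra$ with an edge axis $\aed_{l}$ of $\ico$ and rotating generically about it. Finally $[\tetra]$ occurs because $\tetra$ embeds in $\ico$: the dodecahedron $\mathcal{D}_{0}$ contains the cube with vertices $(\pm 1,\pm 1,\pm 1)$, whose tetrahedral rotation group is a subgroup of $\ico$; choosing $g$ to realize this embedding gives $g\tetra g^{-1}\subset \ico$ and $\Gamma = g\tetra g^{-1}$.

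The only delicate point, and the step I expect to be the real obstacle, is to show that $[\DD_{2}]$ never occurs — this is precisely what distinguishes this clips from $[\tetra]\circledcirc[\tetra]$ and $[\tetra]\circledcirc[\octa]$, where $[\DD_{2}]$ does appear. Suppose $\Gamma$ contained a copy of $\DD_{2}$, that is, three mutually orthogonal order-two rotations common to $\ico$ and to $g\tetra g^{-1}$; these generate a common subgroup $V\cong \DD_{2}$. The key geometric fact is that a tetrahedral group is determined by such a $V$: any order-three rotation normalizing $V$ must have its axis along one of the four body diagonals of the orthogonal frame formed by the axes of $V$, and the corresponding rotations by $\pm 2\pi/3$ are then forced, so there is a \emph{unique} tetrahedral subgroup of $\SO(3)$ containing $V$. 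Now $g\tetra g^{-1}$ is such a tetrahedral group, and inside $\ico$ the subgroup $V$ is normalized by a tetrahedral subgroup $N_{\ico}(V)\cong \tetra$ (the standard fact that the normalizer of a Klein four-group in $\ico$ is a copy of $\tetra$). By the uniqueness just noted these two tetrahedral groups coincide, so $g\tetra g^{-1}=N_{\ico}(V)\subset \ico$ and therefore $\Gamma = g\tetra g^{-1}\cong \tetra$. Thus whenever $\Gamma$ contains a $\DD_{2}$ it is in fact the whole of $\tetra$, so $[\DD_{2}]$ cannot arise as an intersection class, and the list reduces to the four stated entries.
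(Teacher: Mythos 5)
Your proposal is correct and follows essentially the same route as the paper: reduce to the order-two and order-three axis matches via the decompositions \eqref{eq:dico} and \eqref{dtetra}, exhibit $[\triv]$, $[\ZZ_{2}]$, $[\ZZ_{3}]$, $[\tetra]$ directly, and then show that a common $\DD_{2}$ forces the whole of $g\tetra g^{-1}$ into $\ico$. Your justification of that last step (uniqueness of the tetrahedral group over a given Klein four-group, combined with $N_{\ico}(V)\cong\tetra$) is a more explicit, group-theoretic rendering of the paper's geometric argument that three mutually orthogonal edge axes of the dodecahedron determine the inscribed cube and hence the inscribed tetrahedron.
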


\begin{proof}
  Let $\Gamma=\ico\cap \left(g\tetra g^{-1}\right)$. From decompositions~\eqref{eq:dico}--\eqref{dtetra} and Lemma~\ref{lem:intercy}, we only have to consider intersections
  \begin{equation*}
    \ZZ_{3}^{\avd_{j}}\cap \left(g\ZZ_{3}^{\avt_{i}}g^{-1}\right),\quad \ZZ_{2}^{\aed_{l}}\cap \left(g\ZZ_{2}^{\aet_{j}}g^{-1}\right).
  \end{equation*}
  First, note that there always exists $g$ such that $\Gamma$ reduces to a subgroup conjugate to $\triv,\ZZ_{2}$ or $\ZZ_{3}$. Now, if $\Gamma$ contains a subgroup conjugate to $\DD_{2}$, then its three characteristic axes are edge axes of the dodecahedron: say $Ox$, $Oy$ and $Oz$. In that case, $g$ fixes these three axes, and also the 8 vertices of the cube $\mathcal{C}_{0}$. The subgroup $g\tetra g^{-1}$ is thus the (orientation-preserving) symmetry group of a tetrahedron included in the dodecahedron $\mathcal{D}_{0}$. Then,
  $\Gamma\in [\tetra]$.
\end{proof}

The next two Lemmas are more involving.

\begin{lem}
  We have
  \begin{equation*}
    [\octa]\circledcirc [\ico]=\set{[\triv],[\ZZ_{2}],[\ZZ_{3}],[\DD_{3}],[\tetra] }.
  \end{equation*}
\end{lem}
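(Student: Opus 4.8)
The plan is to determine $\Gamma := \octa \cap g\ico g^{-1}$ up to conjugacy as $g$ ranges over $\SO(3)$, following the scheme used for the previous lemmas. First I would pin down the list of candidate classes. Since $\Gamma$ is simultaneously a subgroup of $\octa$ and of a conjugate of $\ico$, its class must be a common lower bound, in the poset of \autoref{fig:SO3-lattice}, for both $[\octa]$ and $[\ico]$. As $\ico$ contains no element of order $4$ (hence neither $\ZZ_4$, $\DD_4$ nor $\octa$) and $\octa$ contains no element of order $5$ (hence neither $\ZZ_5$, $\DD_5$ nor $\ico$), the only survivors are $\set{[\triv],[\ZZ_2],[\ZZ_3],[\DD_2],[\DD_3],[\tetra]}$. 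Using the decompositions \eqref{eq:deccube} and \eqref{eq:dico} together with Lemma~\ref{lem:intercy}, every intersection of a cyclic factor of $\octa$ with a cyclic factor of $g\ico g^{-1}$ reduces to one of $[\triv],[\ZZ_2],[\ZZ_3]$; this confirms the upper bound and already realizes $[\ZZ_2]$ and $[\ZZ_3]$ (align a single edge axis $\aec_l$, resp. a single vertex axis $\avc_j$, of both solids) as well as $[\triv]$ (generic position).

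Next I would realize the remaining positive classes geometrically. The eight vertices $(\pm1,\pm1,\pm1)$ of $\mathcal{D}_0$ in \autoref{fig:dode} form a cube whose rotation group is $\octa$; the rotations of this cube that also preserve $\mathcal{D}_0$ are exactly the tetrahedral subgroup, so in this natural position $\Gamma \supseteq \tetra$, and since $\ico$ has no $4$-fold rotation the intersection is exactly $\tetra$, giving $[\tetra]$. For $[\DD_3]$ I would use that a cube diagonal such as $(1,1,1)$ is simultaneously a $3$-fold axis of $\octa$ and of $\ico$; conjugating $\ico$ by a rotation about $(1,1,1)$ so as to make one secondary $2$-fold axis coincide forces, by the common $3$-fold symmetry about that axis, all three secondary axes to coincide, producing a common $\DD_3$. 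As $\DD_3 \cong S_3$ is a maximal subgroup of $\octa \cong S_4$ and $\octa \not\subseteq \ico$, this common $\DD_3$ is the whole intersection, so $[\DD_3]$ occurs.

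The main obstacle is to show that $[\DD_2]$ does \emph{not} occur in isolation, i.e. that the candidate $[\DD_2]$ from the first step is always absorbed into a larger common class. A common $\DD_2$ amounts to three mutually perpendicular axes that are $2$-fold axes of $\octa$ and of $g\ico g^{-1}$ at once. The plan is to classify the perpendicular triples of $2$-fold axes of $\octa$: either the three face axes $\afc_i$, which span the tetrahedral subgroup, or one face axis together with two edge axes $\aec_l$. For the first type the enclosing $A_4 \cong \tetra$ of the corresponding Klein four-group of $g\ico g^{-1}$ has its $3$-fold axes along the cube diagonals $\avc_j$, which are octahedral vertex axes, so the whole $\tetra$ is common and $\Gamma \in [\tetra]$ rather than $[\DD_2]$. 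The delicate point — and the step I expect to be genuinely hard — is the mixed \textbf{one face $+$ two edges} configuration: abstractly such a group is again nothing but three perpendicular $2$-fold rotations, and any orthogonal triple of the fifteen icosahedral $2$-fold axes can be rotated onto any prescribed orthogonal frame, so ruling it out cannot be purely abstract. One must exploit the specific geometry of the five cubes inscribed in $\mathcal{D}_0$ to argue that an icosahedral Klein four-group cannot align with a mixed octahedral one without its enclosing tetrahedral subgroup also becoming common. This reduction of ``$[\DD_2]$ present'' to ``$[\tetra]$ present'' is precisely the crux on which the stated list stands or falls, and it is where I would concentrate — and scrutinize most carefully — the detailed case analysis.
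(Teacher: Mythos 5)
Your proposal is incomplete at exactly the point you yourself flag as the crux: you never carry out the exclusion of $[\DD_{2}]$, you only explain why it is delicate (the mixed ``one face axis $+$ two edge axes'' Klein four-subgroups of $\octa$) and announce where you would concentrate. Since everything beyond the easy realizations hinges on that exclusion, the proof is not done. For comparison, the paper disposes of $[\DD_{2}]$ in one sentence by appealing to the argument of Lemma~\ref{lem:Ico_Tetra}; but that argument only covers the case where the three common $2$-fold axes are the three face axes of the cube, i.e.\ the normal Klein four-subgroup of $\octa$, which is precisely the case you correctly identify as unproblematic.

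The deeper issue is that the step you would need cannot be made to work: the mixed configuration is realizable and yields exactly $[\DD_{2}]$. Take $g=\QQ\left(\kk;\frac{\pi}{4}\right)$. The subgroup $\tetra\subset\ico$ has $2$-fold axes $Ox$, $Oy$, $Oz$, while the rotated cube $g\mathcal{C}_{0}$ has $Oz$ as a face axis and $Ox$, $Oy$ as edge axes; hence $\ico\cap\left(g\octa g^{-1}\right)$ contains the \emph{non-normal} Klein four-subgroup $W=\set{\Idd,\QQ(\ii;\pi),\QQ(\jj;\pi),\QQ(\kk;\pi)}$ of $g\octa g^{-1}$. The only subgroups of $g\octa g^{-1}\cong S_{4}$ strictly containing a non-normal Klein four-subgroup are a Sylow $2$-subgroup and $g\octa g^{-1}$ itself, both of which contain $4$-fold rotations; since $\ico$ has no element of order $4$, the intersection equals $W\in[\DD_{2}]$. (This is the same $g$ used in Lemma~\ref{lem:Clips_Octa_Tetra} to produce $[\DD_{2}]$ in $[\tetra]\circledcirc[\octa]$, and $\tetra\cap g\octa g^{-1}\subseteq\ico\cap g\octa g^{-1}$ because $\tetra\subset\ico$.) So $[\DD_{2}]$ does belong to $[\octa]\circledcirc[\ico]$, and the stated list omits it, much as the paper itself records that $[\DD_{2}]$ had earlier been omitted from $[\tetra]\circledcirc[\tetra]$ and $[\tetra]\circledcirc[\octa]$. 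Your instinct that the mixed case ``cannot be ruled out purely abstractly'' was right; the correct conclusion is that it cannot be ruled out at all, so the reduction of ``$[\DD_{2}]$ present'' to ``$[\tetra]$ present'' on which your plan (and the printed proof) rests is false.
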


\begin{proof}
  Let $\Gamma=\ico \cap \left(g\octa g^{-1}\right)$. From the poset in Figure~\ref{fig:SO3-lattice}, we deduce that the conjugacy class $[\Gamma]$ belongs to the following list
  \begin{equation*}
    \set{[\triv],[\ZZ_{2}],[\DD_{2}],[\ZZ_{3}],[\DD_{3}],[\tetra]}.
  \end{equation*}
  First, we can always find $g\in \SO(3)$ such that $\Gamma\in[\ZZ_{3}]$, $\Gamma\in [\ZZ_{2}]$ or $\Gamma=\triv$. Moreover, as in the proof of Lemma~\ref{lem:Ico_Tetra}, if $\Gamma$ contains a subgroup conjugate to $\DD_{2}$, then $\Gamma\in [\tetra]$. Finally, we will exhibit some $g\in \SO(3)$ such that $\Gamma\in [\DD_{3}]$. First, recall that
  \begin{equation*}
    A_{1}(1,1,1), \quad A_{2}(1,1,-1), \quad A_4(1,-1,1), \quad A_{5}(-1,-1,1)
  \end{equation*}
  are common vertices of the cube $\mathcal{C}_{0}$ and the dodecahedron $\mathcal{D}_{0}$. Let now $B_{2}(\phi^{-1},0,-\phi)$ be a vertex of the dodecahedron and $I_{2}$ (resp. $I_4$) be the middle-point of $[B_{2}A_{2}]$ (resp. $[A_{4}A_{5}]$ -- see \autoref{fig:dodeclipscube}).

  \begin{figure}[h]
    \centering
    \includegraphics[scale=1.5]{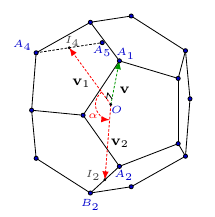}
    \caption{Rotation $g$ to obtain $[\DD_{3}]$ in $[\octa]\circledcirc [\ico]$.}
    \label{fig:dodeclipscube}
  \end{figure}

  Then, $a_{1}=(OI_4)$ and $a_{2}=(OI_{2})$ are perpendicular axes to $a=(OA_{1})$. Choose $\alpha$ such that $\QQ(\vv,\alpha)\vv_{1}=\vv_{2}$, with (see~\autoref{fig:dodeclipscube}):
  \begin{equation*}
    \vv=\overrightarrow{OA_{1}}, \quad \vv_{1}=\overrightarrow{OI_4}, \quad \vv_{2}=\overrightarrow{OI_{2}},
  \end{equation*}
  and set $g=\QQ(\vv,\alpha)$. From decompositions \eqref{eq:deccube} and \eqref{eq:dico}, we deduce then, that $\ico \cap \left(g\octa g^{-1}\right)$ contains the subgroups
  \begin{equation*}
    \ZZ_{2}^{a_{2}}\cap \left(g\ZZ_{2}^{a_{1}}g^{-1}\right) = \ZZ_{2}^{a_{2}},\quad \ZZ_{3}^{a}\cap, \left(g\ZZ_{3}^{a}g^{-1}\right) = \ZZ_{3}^{a}.
  \end{equation*}
  Therefore, $\Gamma$ contains a subgroup conjugate to $\DD_{3}$. Using a maximality argument (see poset in~\autoref{fig:SO3-lattice}), we must have $\Gamma\in[\DD_{3}]$, and this concludes the proof.
\end{proof}

\begin{lem}
  We have
  \begin{equation*}
    [\ico]\circledcirc [\ico] = \set{[\triv],[\ZZ_{2}],[\ZZ_{3}],[\DD_{3}],[\ZZ_{5}],[\DD_{5}],[\ico]}.
  \end{equation*}
\end{lem}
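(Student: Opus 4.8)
The plan is to combine the direct-union decomposition \eqref{eq:dico} with the poset of \autoref{fig:SO3-lattice}, exactly as in the previous lemmas, and then to settle each candidate class by exhibiting an explicit rotation $g$ or by a rigidity argument. First I would note that $\Gamma=\ico\cap\left(g\ico g^{-1}\right)$ is a subgroup of $\ico$, so $[\Gamma]$ lies among the conjugacy classes of subgroups of $\ico\cong A_5$, namely $[\triv],[\ZZ_2],[\ZZ_3],[\ZZ_5],[\DD_2],[\DD_3],[\DD_5],[\tetra],[\ico]$. The structural fact I would record at the outset is that, inside $\ico$, the subgroups $\DD_3$, $\DD_5$ and $\tetra$ are all \emph{maximal}, so that any $\Gamma$ containing one of them equals either that subgroup or all of $\ico$; this is what makes the realizations below clean.

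For the classes that do occur I would produce explicit $g$. Taking $g=\Idd$ gives $[\ico]$. A rotation $g=\QQ(\afd_1;\theta)$ about a face ($5$-fold) axis with generic $\theta$ shares only that $5$-fold axis, giving $\Gamma\in[\ZZ_5]$; likewise a generic rotation about a vertex axis gives $[\ZZ_3]$, a generic rotation about an edge axis gives $[\ZZ_2]$, and a fully generic $g$ gives $[\triv]$. To obtain $[\DD_5]$ and $[\DD_3]$ I would use normalizers: since $N_{\SO(3)}(\DD_5)=\DD_{10}$ and $N_{\SO(3)}(\DD_3)=\DD_6$, the rotations $g=\QQ(\afd_1;\pi/5)$ and $g=\QQ(\avd_1;\pi/3)$ normalize a fixed $\DD_5\subset\ico$, resp. $\DD_3\subset\ico$, without lying in $\ico$ (they rotate by $\pi/5$, resp. $\pi/3$, about an axis where $\ico$ contains only rotations by multiples of $2\pi/5$, resp. $2\pi/3$). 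Hence $\Gamma\supseteq\DD_5$ (resp. $\DD_3$) while $\Gamma\neq\ico$, and maximality forces $\Gamma\in[\DD_5]$ (resp. $[\DD_3]$).

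It then remains to rule out $[\DD_2]$ and $[\tetra]$. For $[\DD_2]$ I would argue as in \autoref{lem:Ico_Tetra}: if $\Gamma$ contains a $\DD_2$, its three mutually orthogonal $2$-fold axes are edge axes of both dodecahedra, and in $\SO(3)$ a given $\DD_2$ is contained in a \emph{unique} tetrahedral group (the three coordinate $2$-fold axes force the four $3$-fold axes to be the cube diagonals). Thus both icosahedra contain this same $\tetra$, whence $\Gamma\supseteq\tetra\supsetneq\DD_2$ and $[\Gamma]\neq[\DD_2]$.

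The exclusion of $[\tetra]$ is the main obstacle, and I expect it to be the genuinely delicate step. By maximality, $\Gamma\supseteq\tetra$ forces $\Gamma=\tetra$ or $\Gamma=\ico$, so excluding $[\tetra]$ is equivalent to proving that two icosahedral groups sharing a tetrahedral subgroup must coincide, i.e. that the icosahedral group through a given $\tetra$ is \emph{unique}. This reduces to an incidence count governed by $N_{\SO(3)}(\tetra)=\octa$, $N_{\SO(3)}(\ico)=\ico$, and the fact that $\ico$ has exactly five tetrahedral subgroups; concretely, it asks how many dodecahedra circumscribe the cube inscribed in a given tetrahedron. I would compute this incidence explicitly before asserting the exclusion, since it is precisely the point at which the tetrahedral class could, or could not, enter $\ico\circledcirc\ico$ — and the asymmetry with the clean ``$\DD_2\Rightarrow\tetra$'' step (where the corresponding count is $1$) is exactly what must be checked with care here.
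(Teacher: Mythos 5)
Your realizations of $[\triv]$, $[\ZZ_2]$, $[\ZZ_3]$, $[\ZZ_5]$ and $[\ico]$, and, via the normalizers $N(\DD_3)=\DD_6$ and $N(\DD_5)=\DD_{10}$, of $[\DD_3]$ and $[\DD_5]$, are correct; the normalizer-plus-maximality argument is in fact cleaner than the paper's explicit constructions with midpoints of edges of $\mathcal{D}_0$. Your exclusion of $[\DD_2]$ (a given $\DD_2$ lies in a unique tetrahedral subgroup of $\SO(3)$, so $\Gamma\supseteq\DD_2$ forces $\Gamma\supseteq\tetra$) is also sound and is essentially the paper's argument.

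The genuine gap is exactly where you suspected it: the exclusion of $[\tetra]$ is never carried out, and when one performs the incidence count you propose, it comes out to $2$, not $1$, so that step fails. Indeed, $N_{\SO(3)}(\tetra)=\octa$ acts transitively on the icosahedral subgroups containing a fixed $\tetra$, and the stabilizer of one of them is $\octa\cap\ico=\tetra$, of index $2$ in $\octa$; hence there are exactly two such overgroups (equivalently, exactly two regular dodecahedra circumscribe the cube inscribed in a given regular tetrahedron, and they are exchanged by a quarter-turn about a coordinate axis). Concretely, take $g=\QQ(\kk;\pi/2)$ with $\kk$ one of the three mutually orthogonal edge axes of $\mathcal{D}_0$ carrying the $2$-fold rotations of $\tetra\subset\ico$: then $g$ normalizes $\tetra$ (it lies in $\octa$), but $g\notin\ico=N_{\SO(3)}(\ico)$ since it has order $4$, so $\tetra\subseteq\ico\cap g\ico g^{-1}\subsetneq\ico$, and maximality of $\tetra$ in $\ico$ gives $\ico\cap g\ico g^{-1}=\tetra$. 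Thus $[\tetra]$ is realized, your proposed exclusion cannot be completed, and the computation puts you in conflict with the stated equality itself. For comparison, the paper's proof disposes of this case by asserting that $\Gamma\supseteq\tetra$ forces $g$ to fix all vertices of the dodecahedron, i.e.\ that the circumscribing dodecahedron is unique; that is precisely the assertion your incidence count was designed to test, and it is the point at which the check does not go through.
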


\begin{proof}
  Let $\Gamma=\ico \cap \left(g\ico g^{-1}\right)$. Considering the subclasses of $[\ico]$, we have to check the classes
  \begin{equation*}
    [\tetra], \quad [\DD_{3}], \quad [\DD_{5}], \quad [\DD_{2}], \quad [\ZZ_{3}], \quad [\ZZ_{5}], \quad [\ZZ_{2}].
  \end{equation*}
  Note first, that there exist rotations $g$ such that $\Gamma\in[\ZZ_{2}]$, $\Gamma\in[\ZZ_{3}]$, $\Gamma\in[\ZZ_{5}]$ or $\Gamma=\triv$.

  When $\Gamma$ contains a subgroup conjugate to $\tetra$ or $\DD_{2}$, using the same argument as in the proof of Lemma~\ref{lem:Ico_Tetra}, $g$ fixes all the dodecahedron vertices. In that case, $\Gamma=\ico$.

  We will now exhibit some $g\in \SO(3)$ such that $\Gamma\in [\DD_{3}]$. Consider the dodecahedron $\mathcal{D}_{0}$ in \autoref{fig:dode} and the points $A_{3}(1,-1,-1)$ and $B_{3}(\phi,-\phi^{-1},0)$. Let $I_{3}$ be the middle-point of $[A_{3}B_{3}]$ and $g$ be the order two rotation around $a_{1}:=(OA_{1})$ (see~\autoref{fig:dodeclipsdode}). Let
  \begin{equation*}
    b_{1}:=(OI_{3}),\quad b_{2}:=\QQ\left(a_{1},\frac{2\pi}{3}\right)b_{1},\quad b_{3}:=\QQ\left(a_{1},\frac{2\pi}{3}\right)b_{2}.
  \end{equation*}
  We check directly that $a_{1},b_{i}$ ($i=1,\dotsc,3)$ are the only $g$-invariant characteristic axes of the dodecahedron. We deduce then, from decomposition~\eqref{eq:dico} that $\Gamma$ reduces to
  \begin{equation*}
    \ZZ_{3}^{a_{1}}\biguplus_{i=1}^{3} \ZZ_{2}^{b_{i}}\in [\DD_{3}].
  \end{equation*}

  \begin{figure}[h]
    \centering
    \includegraphics[scale=1.5]{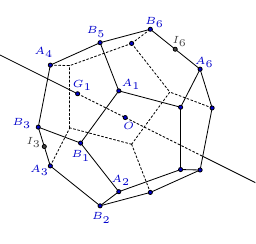}
    \caption{Rotation $g$ to obtain $[\DD_{3}]$ or $[\DD_{5}]$ in $[\ico]\circledcirc [\ico]$.}
    \label{fig:dodeclipsdode}
  \end{figure}

  In the same way, we can find $g\in\SO(3)$, such that $\Gamma\in [\DD_{5}]$. Let
  \begin{equation*}
    B_{1}(\phi,\phi^{-1},0), \quad B_{5}(\phi^{-1},0,\phi), \quad B_{6}(-\phi^{-1},0,\phi), \quad A_{6}(-1,1,1)
  \end{equation*}
  be vertices of the dodecahedron $\mathcal{D}_{0}$. Let $G_{1}$ be the center of the pentagon $A_{1}B_{1}B_{3}A_{4}B_{5}$ (see~\autoref{fig:dodeclipsdode}) and $I_{6}$ be the middle-point of $[B_{6}A_{6}]$. Let $g$ be the order two rotation around $f_{1}:=(OG_{1})$ and set
  \begin{equation*}
    c_{1}:=(OI_{6}),\quad c_{k+1}:=\QQ\left(\overrightarrow{OA_{1}},\frac{2\pi}{5}\right)c_k,\quad 1\le k\le 4.
  \end{equation*}
  Then we can check that $f_{1}$, $c_k$ ($k=1,\dotsc,5$) are the only $g$-invariant characteristic axes of the dodecahedron. Using decomposition~\eqref{eq:dico}, we deduce then, that $\Gamma\in [\DD_{5}]$, which concludes the proof.
\end{proof}

\subsection{Infinite subgroups}

The primary axis of both $\SO(2)$ and $\OO(2)$ is defined as the $z$-axis, while any perpendicular axis to $(Oz)$ is a secondary axis for $\OO(2)$.

Clips operation between $\SO(2)$ or $\OO(2)$ and finite subgroups are obtained using simple arguments on characteristic axes. The same holds for the clips $[\SO(2)]\circledcirc [\SO(2)]$. To compute $[\OO(2)]\circledcirc [\OO(2)]$, consider the subgroup $\Gamma=\OO(2)\cap \left(g\OO(2) g^{-1}\right)$ for some $g\in \SO(3)$.
\begin{enumerate}
  \item If both primary axes are the same, then $\Gamma=\OO(2)$;
  \item if the primary axis of $g\OO(2) g^{-1}$ is in the $xy$--plane, then $\Gamma\in[\DD_{2}]$;
  \item in all other cases, $\Gamma\in[\ZZ_{2}]$, where the primary axis of $\Gamma$ is perpendicular to the primary axes of $\OO(2)$ and $g\OO(2) g^{-1}$.
\end{enumerate}

\section{Proofs for $\OO(3)$}
\label{sec:proofs-O3}

In this Appendix, we provide the details about clips operations between type III closed $\OO(3)$ subgroups. The proofs follow the same ideas that has been used for $\SO(3)$ closed subgroups (decomposition into simpler subgroups and discussion about their characteristic axes), but most of them are unfortunately more involving.

We first recall the general structure of type III subgroups $\Gamma$ of $\OO(3)$ (see~\cite{IG1984} for details). For each such subgroup $\Gamma$, there exists a couple $L\subset H$ of $\SO(3)$ subgroups such that $H=\pi(\Gamma)$, where
\begin{equation*}
  \pi\: : \: g\in \OO(3)\mapsto \det(g) g\in \SO(3)
\end{equation*}
and $L=\SO(3)\cap \Gamma$ is an indexed $2$ subgroup of $H$. These characteristic couples are detailed in~\autoref{tab:ClassIII}. Note that, for a given couple $(L,H)$, $\Gamma$ can be recovered as $\Gamma = L\cap g L$, where $-g\in H\setminus L$.

\begin{table}[h]
  \begin{center}
    \begin{tabular}{ccc}
      \toprule
      \hspace*{1cm}$\Gamma$\hspace*{1cm} & \hspace*{1cm}$H$\hspace*{1cm} & \hspace*{1cm}$L$\hspace*{1cm} \\
      \midrule
      $\ZZ_{2}^{-}$                      & $\ZZ_{2}$                     & $\triv$                       \\

      $\ZZ_{2n}^{-}$                     & $\ZZ_{2n}$                    & $\ZZ_{n}$                     \\

      $\DD_{n}^{v}$                      & $\DD_{n}$                     & $\ZZ_{n}$                     \\

      $\DD_{2n}^h$                       & $\DD_{2n}$                    & $\DD_{n}$                     \\

      $\octa^{-}$                        & $\octa$                       & $\tetra$                      \\

      $\OO(2)^{-}$                       & $\OO(2)$                      & $\SO(2)$                      \\
      \bottomrule
    \end{tabular}
    \caption{Characteristic couples for type III subgroups}
    \label{tab:ClassIII}
  \end{center}
\end{table}

In the following, we shall use the following convention:
\begin{equation*}
  \ZZ_{1}^{\sigma} = \ZZ_{1}^{-} = \DD_{1}^{v} = \triv.
\end{equation*}

\subsection{$\ZZ_{2n}^{-}$ subgroups}

Consider the couple $\ZZ_{n} \subset \ZZ_{2n}$ ($n>1$) in~\autoref{tab:ClassIII}, where
\begin{equation*}
  \ZZ_{2n} = \set{\Idd,\QQ\left(\kk;\frac{\pi}{n}\right),\QQ\left(\kk;\frac{2\pi}{n}\right),\dotsc}
\end{equation*}
and let $\mathbf{r}_{n} := \displaystyle{\QQ\left(\kk;\frac{\pi}{n}\right)} \in \ZZ_{2n}\setminus \ZZ_{n}$. Set
\begin{equation}\label{eq:Z2nMoins}
  \ZZ_{2n}^{-}:=\ZZ_{n} \cup (-\mathbf{r}_{n}\ZZ_{n}).
\end{equation}
The \emph{primary axis} of the subgroup $\ZZ_{2n}^{-}$ is defined as the $z$-axis.

\begin{rem}\label{rem:Z2moins}
  The subgroup $\ZZ_{2}^{-}$ is generated by $-\QQ(\kk,\pi)$ which is the reflection through the $xy$ plane. If $\sigma_{b}$ is the reflection through the plane with normal axis $b$, then $\ZZ_{2}^{\sigma_b} := \set{e,\sigma_{b}}$, which is conjugate to $\ZZ_{2}^{-}$.
\end{rem}

We have the following lemma.

\begin{lem}\label{lem:InterMoinsRn}
  Let $m,n\geq 2$ be two integers. Set $d:=\gcd(n,m)$ and
  \begin{equation*}
    \mathbf{r}_{n} := \QQ\left(\kk;\frac{\pi}{n}\right), \quad \mathbf{r}_{m} := \QQ\left(\kk;\frac{\pi}{m}\right).
  \end{equation*}
  The intersection $(-\mathbf{r}_{n}\ZZ_{n})\cap (-\mathbf{r}_{m}\ZZ_{m})$ does not reduce to $\emptyset$ if and only if $m/d$ and $n/d$ are odds. In such a case, we have
  \begin{equation*}
    (-\mathbf{r}_{n}\ZZ_{n})\cap (-\mathbf{r}_{m}\ZZ_{m}) = -\mathbf{r}_{d}\ZZ_{d},\quad \mathbf{r}_{d} = \QQ\left(\kk;\frac{\pi}{d}\right).
  \end{equation*}
\end{lem}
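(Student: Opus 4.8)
The plan is to reduce the problem from group elements to rotation angles. Every element of $-\mathbf{r}_n\ZZ_n$ is an improper isometry of the form $-\QQ(\kk;\theta)$, and two such elements coincide if and only if $\QQ(\kk;\theta_1)=\QQ(\kk;\theta_2)$, i.e. $\theta_1\equiv\theta_2\pmod{2\pi}$. First I would record that $\mathbf{r}_n\ZZ_n=\{\QQ(\kk;(2j+1)\pi/n)\}$, so that $-\mathbf{r}_n\ZZ_n$ consists exactly of the transformations $-\QQ(\kk;\theta)$ whose angle $\theta$ is an \emph{odd} multiple of $\pi/n$. Thus the whole question is translated into a statement about angles: for which $n,m$ do the odd multiples of $\pi/n$ and the odd multiples of $\pi/m$ share a common value modulo $2\pi$, and what is that common set.

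Next I would set up the arithmetic. Writing $m=dm'$, $n=dn'$ with $\gcd(m',n')=1$ and using the common denominator $L=\mathrm{lcm}(m,n)=dm'n'$, a common angle $a\pi/n=b\pi/m\pmod{2\pi}$ with $a,b$ odd becomes the congruence $am'\equiv bn'\pmod{2L}$. Reducing this congruence modulo $n'$ and modulo $m'$ and using $\gcd(m',n')=1$ forces $n'\mid a$ and $m'\mid b$; writing $a=n'a_1$ and $b=m'b_1$ collapses the congruence to $a_1\equiv b_1\pmod{2d}$. The oddness of $a=n'a_1$ and $b=m'b_1$ then requires $n'$, $m'$, $a_1$, $b_1$ all to be odd.

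From this the equivalence falls out: if $n'$ or $m'$ is even then no odd $a$ or $b$ can satisfy the divisibility, so the intersection is empty; conversely if both are odd the choice $a_1=b_1=1$ (hence $a=n'$, $b=m'$) is a valid solution, so the intersection is nonempty. This yields precisely the stated criterion that $n/d$ and $m/d$ be odd. For the final identification I would substitute back: the common angle equals $a_1m'n'\pi/L=a_1\pi/d$, and as $a_1$ runs over all odd integers (with $b_1=a_1$ always admissible) these are exactly the odd multiples of $\pi/d$, i.e. the angle set of $-\mathbf{r}_d\ZZ_d$. Since the angle-to-element correspondence is a bijection on each of these rotoreflection sets, this gives the equality $(-\mathbf{r}_n\ZZ_n)\cap(-\mathbf{r}_m\ZZ_m)=-\mathbf{r}_d\ZZ_d$.

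I expect the main obstacle to be the divisibility step: extracting $n'\mid a$ and $m'\mid b$ from the single congruence modulo $2L$ and correctly tracking the factor of $2$, so that the several oddness constraints combine into the clean criterion ``$n/d$ and $m/d$ odd''. The reduction to angles and the final readback are routine once this number-theoretic core is handled carefully.
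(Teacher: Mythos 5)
Your proposal is correct and follows essentially the same route as the paper: both reduce the question to equality of rotation angles that are odd multiples of $\pi/n$ and $\pi/m$, factor out $d=\gcd(m,n)$, and use coprimality of $n/d$ and $m/d$ to force the divisibility and oddness constraints that yield the criterion and the identification with $-\mathbf{r}_{d}\ZZ_{d}$. The only cosmetic difference is that you phrase the angle condition as a congruence modulo $2\pi$ while the paper uses exact equalities with explicit bounds on the odd numerators.
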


\begin{proof}
  The intersection $(-\mathbf{r}_{n}\ZZ_{n})\cap (-\mathbf{r}_{m}\ZZ_{m})$ differs from $\emptyset$, if and only if, there exist integers $i,j$ such that
  \begin{equation*}
    \frac{2i+1}{n}\pi = \frac{2j+1}{m}\pi,\quad 2i+1\leq 2n,\quad 2j+1\leq 2m.
  \end{equation*}
  Let $n=dn_{1}$ and $m=dm_{1}$. The preceding equation can then be recast as $(2i+1)m_{1}=(2j+1)n_{1}$, so that
  \begin{equation*}
    2i+1=pn_{1} \text{ and } 2j+1=pm_{1}.
  \end{equation*}
  Thus, $m_{1}$ and $n_{1}$ are necessarily odds, in which case (recall that $\ZZ_{1} = \triv$)
  \begin{equation*}
    (-\mathbf{r}_{n}\ZZ_{n})\cap (-\mathbf{r}_{n}\ZZ_{m}) = -\mathbf{r}_{d}\ZZ_{d},\quad \mathbf{r}_{d} = \QQ\left(\kk;\frac{\pi}{d}\right).
  \end{equation*}
\end{proof}

\begin{cor}\label{lem:ClipsZmoins}
  Let $m,n\geq 1$ be two integers. Set $d:=\gcd(n,m)$. Then, we have
  \begin{equation*}
    \left[ \ZZ_{2n}^{-} \right] \circledcirc \left[ \ZZ_{2m}^{-}\right]
    =\begin{cases}
    \set{[\triv],[\ZZ_{2d}^-]} \text{if $n/d$ and $m/d$ are odd} \\
    \set{[\triv],[\ZZ_{d}]} \text{otherwise}
    \end{cases}
  \end{equation*}
\end{cor}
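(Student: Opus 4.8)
The plan is to exploit the cyclic structure of $\ZZ_{2n}^{-}$ given by \eqref{eq:Z2nMoins} and to split every intersection according to determinant. Writing $s_{n} := -\mathbf{r}_{n} = -\QQ\left(\kk;\frac{\pi}{n}\right)$, the group $\ZZ_{2n}^{-} = \ZZ_{n}\cup(-\mathbf{r}_{n}\ZZ_{n})$ is the cyclic group $\langle s_{n}\rangle$ of order $2n$ generated by the rotoreflection $s_{n}$ about the $z$-axis; its proper part ($\det = 1$) is $\ZZ_{n}$ and its improper part ($\det = -1$) is $-\mathbf{r}_{n}\ZZ_{n}$. Since $\ZZ_{2n}^{-}$ is a type III subgroup it does not contain $-\Idd$, and consequently every improper element of $\ZZ_{2n}^{-}$ is a rotoreflection (possibly a pure reflection) whose invariant axis is exactly the primary $z$-axis. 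First I would fix $g\in\OO(3)$, let $a$ denote the image line $g\cdot(Oz)$, and observe that $g\ZZ_{2m}^{-}g^{-1}$ is the analogous group with primary axis $a$, so that the intersection $\Gamma := \ZZ_{2n}^{-}\cap g\ZZ_{2m}^{-}g^{-1}$ splits as
\begin{equation*}
  \Gamma = \bigl(\ZZ_{n}\cap \ZZ_{m}^{a}\bigr)\ \cup\ \bigl((-\mathbf{r}_{n}\ZZ_{n})\cap g(-\mathbf{r}_{m}\ZZ_{m})g^{-1}\bigr),
\end{equation*}
the two pieces being the proper and improper parts of $\Gamma$.

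Next I would distinguish two cases according to the relative position of $a$ and the $z$-axis. If $a\neq(Oz)$, the proper part is $\triv$ by Lemma~\ref{lem:intercy}, while the improper part is empty: a genuine rotoreflection has a unique invariant axis and a pure reflection is determined by its mirror plane, so no improper transformation can have both $z$ and $a$ as axis; hence $\Gamma=\triv$. If, on the contrary, $a=(Oz)$, I claim $g\ZZ_{2m}^{-}g^{-1}=\ZZ_{2m}^{-}$. Indeed any $g\in\OO(3)$ fixing the $z$-axis as a line conjugates the generating rotoreflection $s_{m}$ to a rotoreflection about the same axis with angle $\pm\frac{\pi}{m}$, that is to $s_{m}^{\pm1}$, and therefore preserves the cyclic group $\langle s_{m}\rangle=\langle s_{m}^{-1}\rangle=\ZZ_{2m}^{-}$. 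Thus in this case $\Gamma=\ZZ_{2n}^{-}\cap\ZZ_{2m}^{-}$, independently of $g$.

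It then remains to compute $\ZZ_{2n}^{-}\cap\ZZ_{2m}^{-}$ from its two determinant parts. The proper part is $\ZZ_{n}\cap\ZZ_{m}=\ZZ_{d}$ by Lemma~\ref{lem:intercy}, and the improper part $(-\mathbf{r}_{n}\ZZ_{n})\cap(-\mathbf{r}_{m}\ZZ_{m})$ is handled verbatim by Lemma~\ref{lem:InterMoinsRn}: it equals $-\mathbf{r}_{d}\ZZ_{d}$ when $n/d$ and $m/d$ are both odd, and is empty otherwise. Therefore
\begin{equation*}
  \ZZ_{2n}^{-}\cap\ZZ_{2m}^{-} =
  \begin{cases}
    \ZZ_{d}\cup(-\mathbf{r}_{d}\ZZ_{d})=\ZZ_{2d}^{-} & \text{if } n/d,\ m/d \text{ are odd},\\
    \ZZ_{d} & \text{otherwise}.
  \end{cases}
\end{equation*}
Since a generic $g$ (moving the $z$-axis) realizes $\Gamma=\triv$ and the choice $a=(Oz)$ realizes the nontrivial class above, the clips set $[\ZZ_{2n}^{-}]\circledcirc[\ZZ_{2m}^{-}]$ consists exactly of $[\triv]$ together with $[\ZZ_{2d}^{-}]$ in the first case and $[\ZZ_{d}]$ in the second, as announced. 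The degenerate cases $n=1$ or $m=1$ collapse to $\ZZ_{2}^{-}$ and are covered by the convention $\ZZ_{1}^{-}=\triv$ and Remark~\ref{rem:Z2moins}.

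I expect the main obstacle to be the structural step of the second paragraph: proving that the $\OO(3)$-conjugates of $\ZZ_{2m}^{-}$ are precisely the analogous rotoreflection groups about the image axis, and in particular that every $g$ stabilizing the $z$-axis leaves $\ZZ_{2m}^{-}$ invariant, together with the accompanying axis-uniqueness argument that annihilates the improper intersection when $a\neq(Oz)$. Everything else is bookkeeping that reduces cleanly to Lemmas~\ref{lem:intercy} and~\ref{lem:InterMoinsRn}.
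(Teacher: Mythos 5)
Your proposal is correct and follows essentially the same route as the paper: reduce to the case of matching primary axes (non-matching axes giving $\triv$), split $\ZZ_{2n}^{-}\cap\ZZ_{2m}^{-}$ into its proper part $\ZZ_{n}\cap\ZZ_{m}=\ZZ_{d}$ via Lemma~\ref{lem:intercy} and its improper part via Lemma~\ref{lem:InterMoinsRn}. The only difference is that you spell out the justifications (uniqueness of the invariant axis of an improper element, invariance of $\ZZ_{2m}^{-}$ under conjugation by elements stabilizing its axis) that the paper leaves implicit.
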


\begin{proof}
  Note first that all intersections reduce to $\triv$ when the characteristic axes don't match, so we have only to consider the situation where they match. Now, by~\eqref{eq:Z2nMoins}, we have only to consider the intersection
  \begin{equation*}
    \ZZ_{2n}^{-}\cap \ZZ_{2m}^{-} =(\ZZ_{n}\cap \ZZ_{m})\cup \left((-\mathbf{r}_{n}\ZZ_{n})\cap (-\mathbf{r}_{m}\ZZ_{m})\right).
  \end{equation*}
  By Lemma~\ref{lem:intercy}, $\ZZ_{n}\cap \ZZ_{m}=\ZZ_{d}$ and we directly conclude using Lemma~\ref{lem:InterMoinsRn}.
\end{proof}

\subsection{$\DD_{n}^{v}$ subgroups}

Consider the couple $\ZZ_{n} \subset \DD_{n}$ in~\autoref{tab:ClassIII}. Recall that $\DD_{n}$ contains $\ZZ_{n}$ and all the second order rotations about the $b_{j}$'s axes (see~\eqref{diedre} and \autoref{fig:dihedral-second-axis}). Set
\begin{equation}\label{eq:diedralv}
  \DD_{n}^{v} := \ZZ_{n} \biguplus_{j=1}^{n} \ZZ_{2}^{\sigma_{b_{j}}}.
\end{equation}
Given $g\in \OO(3)$, the primary axis of $g\DD_{n}^{v}g^{-1}$ is $g(Oz)$, and its secondary axes are $gb_{j}$.

\begin{lem}\label{lem:DnvClipsZ2mmoins}
  Let $n\geq 2$, $m\geq 1$ be two integers. Set $d=\gcd(n,m)$ and
  \begin{equation*}
    i(m):=3-\gcd(2,m)=
    \begin{cases}
      1, & \text{ if $m$ is even}, \\
      2, & \text{ if $m$ is odd}.
    \end{cases}
  \end{equation*}
  Then, we have
  \begin{equation*}
    \left[ \DD_{n}^{v} \right] \circledcirc \left[ \ZZ_{2m}^{-}\right] = \set{\triv,\left[ \ZZ_{i(m)}^{-}\right],\left[ \ZZ_{d} \right]}.
  \end{equation*}
\end{lem}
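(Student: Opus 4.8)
The plan is to intersect the explicit decompositions \eqref{eq:diedralv} of $\DD_{n}^{v}$ and \eqref{eq:Z2nMoins} of $\ZZ_{2m}^{-}$, and to control the outcome through the relative position of the two primary axes together with a parity analysis of $m$. The starting structural remark is that $\ZZ_{2m}^{-}$ is in fact the cyclic group of order $2m$ generated by the rotoreflection $-\mathbf{r}_{m} = -\QQ(\kk;\pi/m)$: its proper part is exactly $\ZZ_{m}$, while its improper coset $-\mathbf{r}_{m}\ZZ_{m}$ consists of the elements $-\QQ(\kk;(2k+1)\pi/m)$. By contrast, $\DD_{n}^{v}$ contains only the rotations of $\ZZ_{n}$ (about the $z$-axis) and the reflections $\sigma_{b_{j}}$ through the vertical planes; it contains no genuine rotoreflection and not $-\Idd$. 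Consequently any intersection $\Gamma = \DD_{n}^{v}\cap g\ZZ_{2m}^{-}g^{-1}$ is a subgroup of a conjugate of the cyclic group $\ZZ_{2m}^{-}$, hence cyclic, and it cannot contain a genuine rotoreflection; excluding these leaves $[\Gamma]$ of the form $[\ZZ_{e}]$ or $[\ZZ_{2}^{-}]$ only, ruling out in one stroke every class $[\ZZ_{2k}^{-}]$ with $k\ge 2$ as well as any dihedral-type class.

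First I would treat the proper part. The proper rotations of $\Gamma$ are exactly $\ZZ_{n}\cap g\ZZ_{m}g^{-1}$, an intersection of two cyclic groups about the axes $z$ and $g(z)$. By Lemma~\ref{lem:intercy} this equals $\ZZ_{d}$ when $g(z)=z$ and is trivial otherwise, with no intermediate value; this is precisely what forces only the two proper classes $[\triv]$ and $[\ZZ_{d}]$ to occur.

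Next I would treat the improper part, where the parity of $m$ enters. The key computation is to decide which coset elements $-\QQ(\kk;(2k+1)\pi/m)$ are reflections: an improper order-two element fixing a plane occurs precisely when the angle is $\pi \pmod{2\pi}$, i.e. when $m\mid(2k+1)$, and since $2k+1$ is odd this happens if and only if $m$ is odd, in which case the unique such element is the reflection $\sigma_{\kk}$ through the $xy$-plane (cf. Remark~\ref{rem:Z2moins}); for $m$ even the coset carries no reflection at all. Since a reflection of $\DD_{n}^{v}$ is some $\sigma_{b_{j}}$, the equality $\sigma_{b_{j}} = g\sigma_{\kk}g^{-1} = \sigma_{g(\kk)}$ forces $g(\kk)$ to be horizontal and aligned with the secondary axis $b_{j}$; choosing such a $g$ yields $\Gamma = \ZZ_{2}^{\sigma_{b_{j}}}\in[\ZZ_{2}^{-}]$ when $m$ is odd. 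Recalling $i(m)=3-\gcd(2,m)$ and the convention $\ZZ_{1}^{-}=\triv$, this is exactly the class $[\ZZ_{i(m)}^{-}]$: it is $[\ZZ_{2}^{-}]$ for $m$ odd and collapses to $[\triv]$ for $m$ even.

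Finally I would verify that the proper and improper contributions cannot combine into a larger class. If $g(z)=z$, so that a nontrivial rotation survives, then every improper element of $g\ZZ_{2m}^{-}g^{-1}$ reverses the $z$-axis, whereas the reflections $\sigma_{b_{j}}$ of $\DD_{n}^{v}$ fix it pointwise; hence no reflection can be common, and $\Gamma$ can never simultaneously contain a nontrivial rotation and a reflection, excluding any $\DD^{v}$- or $\DD$-type outcome. Assembling the three achievable classes $[\triv]$, $[\ZZ_{d}]$ and $[\ZZ_{i(m)}^{-}]$ gives the claim. I expect the main obstacle to be precisely this parity bookkeeping of the improper coset — correctly establishing that a reflection appears in $\ZZ_{2m}^{-}$ exactly when $m$ is odd — since this single fact distinguishes the two branches of $i(m)$ and is easy to mishandle by conflating rotoreflections, reflections and the inversion $-\Idd$.
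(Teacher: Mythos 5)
Your proof is correct and follows essentially the same route as the paper's: a case analysis on the position of the primary axis of $g\ZZ_{2m}^{-}g^{-1}$ relative to the primary and secondary axes of $\DD_{n}^{v}$, using the decompositions \eqref{eq:diedralv} and \eqref{eq:Z2nMoins}. The only difference is presentational: where the paper delegates the secondary-axis case to the previously computed clips $[\ZZ_{2}^{-}]\circledcirc[\ZZ_{2m}^{-}]$ (Lemma~\ref{lem:ClipsZmoins}), you re-derive the key parity fact directly (the coset $-\mathbf{r}_{m}\ZZ_{m}$ contains a reflection iff $m$ is odd), and you add the useful a priori observation that $\Gamma$ must be cyclic.
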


\begin{proof}
  Let $\Gamma := \DD_{n}^{v} \cap \left(g\ZZ_{2m}^{-} g^{-1}\right)$ and
  \begin{equation*}
    \ZZ_{2m}^{-}=\ZZ_{m} \cup (-\mathbf{r}_{m}\ZZ_{m}),\quad \mathbf{r}_{m}=\QQ\left(\kk;\frac{\pi}{m}\right).
  \end{equation*}
  \begin{enumerate}
    \item If both primary axes of $\DD_{n}^{v}$ and $g\ZZ_{2m}^{-} g^{-1}$ (generated by $g\kk$) match, then by decomposition~\eqref{eq:diedralv} and Lemma~\ref{lem:intercy}, $\Gamma$ reduces to $\ZZ_{n}\cap \ZZ_{m}=\ZZ_{d}$.
    \item If the primary axis of $g\ZZ_{2m}^{-}g^{-1}$ matches with a secondary axis of $\DD_{n}^{v}$, say $(Ox)$, then $\Gamma$ reduces to $\ZZ_{2}^{\sigma_{b_{0}}}\cap \left(g\ZZ_{2m}^{-}g^{-1}\right)$. Such an intersection has already be studied in the clips operation $[\ZZ_{2}^{-}] \circledcirc [\ZZ_{2m}^{-}]$  (see Lemma~\ref{lem:ClipsZmoins}).
    \item Otherwise, $\Gamma=\triv$, which concludes the proof.
  \end{enumerate}
\end{proof}

\begin{lem}
  Let $m,n\geq 2$ be two integers and $d=\gcd(n,m)$. Then, we have
  \begin{equation*}
    \left[ \DD_{n}^{v} \right] \circledcirc \left[ \DD_{m}^{v}\right]=\set{\triv,\left[ \ZZ_{2}^{-}\right],\left[ \DD_{d}^{v} \right],\left[ \ZZ_{d} \right]}.
  \end{equation*}
\end{lem}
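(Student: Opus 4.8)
The plan is to follow the strategy already used for the $\SO(3)$ dihedral--dihedral clips: decompose each factor according to \eqref{eq:diedralv} and argue purely about the relative positions of the characteristic elements. Write $\Gamma := \DD_n^v \cap (g\DD_m^v g^{-1})$ for $g\in\OO(3)$ and set $d := \gcd(n,m)$. The first move is to split $\Gamma$ by the determinant. Since $\DD_n^v\cap\SO(3) = \ZZ_n$, the rotation part $\Gamma\cap\SO(3)$ equals $\ZZ_n\cap(g\ZZ_m g^{-1})$, while the improper elements of $\Gamma$ are exactly the reflections common to the two groups. A nontrivial common rotation has a well-defined axis, which must be the primary ($z$-)axis of each factor; hence $\Gamma\cap\SO(3)$ is nontrivial only when the two primary axes coincide, and in that case Lemma~\ref{lem:intercy} gives $\Gamma\cap\SO(3) = \ZZ_d$. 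Every mirror plane of $\DD_n^v$ is normal to some $b_j$, hence contains the primary axis, so a common reflection amounts to a common mirror plane through both primary axes.

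The heart of the argument is then a dichotomy on the primary axes. If they differ, then $\Gamma\cap\SO(3) = \triv$; two distinct common mirrors would have product a nontrivial rotation lying in $\Gamma\cap\SO(3)$, which is impossible, so at most one common reflection survives and $\Gamma$ is conjugate to $\triv$ or, by Remark~\ref{rem:Z2moins}, to $\ZZ_2^-$. If the primary axes coincide, the $n$ mirror planes of $\DD_n^v$ and the $m$ mirror planes of $g\DD_m^v g^{-1}$ form, as angular positions through the common axis, two arithmetic progressions of steps $\pi/n$ and $\pi/m$. Viewing these angles in $\RR/\pi\ZZ$ as cosets of the cyclic subgroups $\frac{\pi}{n}\ZZ/\pi\ZZ$ and $\frac{\pi}{m}\ZZ/\pi\ZZ$ (of orders $n$ and $m$), their intersection is empty or a coset of $\frac{\pi}{d}\ZZ/\pi\ZZ$; that is, the two systems share either $0$ or exactly $d$ mirror planes, the latter spaced $\pi/d$ apart. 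Together with $\Gamma\cap\SO(3) = \ZZ_d$, the first subcase yields $\Gamma = \ZZ_d$ and the second yields $\ZZ_d$ together with $d$ equispaced mirrors, i.e. $\Gamma = \DD_d^v$.

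It then remains to realize each class. A generic $g$ puts the axes in general position and gives $[\triv]$. Positioning the two groups so that their primary axes lie in a common plane $P$ which is a mirror of both, while the axes themselves point in distinct directions of $P$ (obtained by first rotating $\DD_m^v$ about $\kk$ so that $P$ is a mirror, then tilting its primary axis within $P$ by a rotation about the normal of $P$), forces a single shared reflection and gives $[\ZZ_2^-]$. Aligning the primary axes by a generic rotation about $z$ destroys all mirror coincidences and gives $[\ZZ_d]$, while aligning the primary axes together with one mirror gives $[\DD_d^v]$. With the convention $\ZZ_1 = \DD_1^v = \triv$ this exhausts the list $\set{\triv,[\ZZ_2^-],[\DD_d^v],[\ZZ_d]}$.

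The delicate step is the mirror count in the coincident-axis case --- namely that a single shared mirror already forces exactly $d$ of them, so that no group intermediate between $\ZZ_d$ and $\DD_d^v$ can occur --- which is precisely the coset-intersection fact recorded above and is where I would be most careful. The remaining ingredients (the determinant splitting, the uniqueness of rotation axes, and the explicit realizations) are routine once the decomposition \eqref{eq:diedralv} is in hand.
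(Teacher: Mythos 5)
Your proof is correct and follows essentially the same route as the paper's: decompose each factor via \eqref{eq:diedralv} and split on whether the primary axes coincide, the count of common mirror planes then deciding between $\ZZ_{d}$ and $\DD_{d}^{v}$. You are in fact slightly more careful than the paper on two points --- the coset-intersection argument showing that one shared mirror forces exactly $d$ of them (so nothing strictly between $\ZZ_{d}$ and $\DD_{d}^{v}$ occurs), and the observation that $[\ZZ_{2}^{-}]$ can also arise from distinct primary axes sharing a single mirror plane, a configuration the paper's first case nominally assigns to $[\triv]$ (harmlessly, since $[\ZZ_{2}^{-}]$ reappears in its second case when $d=1$ and $b_{0}=b'_{0}$).
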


\begin{proof}
  Only two cases need to be considered.
  \begin{enumerate}
    \item If the primary axes of $\DD_{n}^{v}$ and $g\DD_{m}^{v}g^{-1}$ do not match, then we get $\triv$.
    \item If they have the same primary axis, by decomposition~\eqref{eq:diedralv}, we have to consider the intersections
          \begin{equation*}
            \ZZ_{n}\cap \ZZ_{m},\quad \ZZ_{2}^{\sigma_{b_{j}}}\cap \ZZ_{2}^{\sigma_{b'_k}},
          \end{equation*}
          which reduce to
          \begin{equation*}
            \ZZ_{d}\biguplus \ZZ_{2}^{\sigma_{c_{l}}},
          \end{equation*}
          where $c_{l}$ are the common secondary axis of the two subgroups. Then, we get either $\ZZ_{d}$, $\DD_{d}^{v}$ or a subgroup conjugate to $\ZZ_{2}^{-}$ (when $d=1$ and $b_{0}=b'_{0}$), which concludes the proof.
  \end{enumerate}
\end{proof}

\subsection{$\DD_{2n}^h$ subgroups}

Consider the couple $\DD_{n} \subset \DD_{2n}$ in~\autoref{tab:ClassIII}. For $j=0,\dotsc , n-1$, let $p_{j}$ be the axis generated by
\begin{equation*}
  \vv_{j} := \QQ\left(\kk;\frac{j\pi}{n}\right)\cdot \ii,
\end{equation*}
and $q_{j}$, be the axis generated by
\begin{equation*}
  \ww_{j}:=\QQ\left(\kk;\frac{(2j+1)\pi}{2n}\right)\cdot \ii.
\end{equation*}

\begin{figure}[h]
  \centering
  \subfigure[Characteristic axes for $\DD_{6}\supset \DD_{3}$\label{fig:D3-D6}]{\includegraphics[scale=0.9]{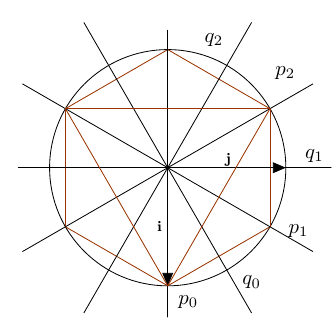}}
  \subfigure[Characteristic axes for $\DD_{8}\supset \DD_{4}$\label{fig:D4-D8}]{\includegraphics[scale=0.9]{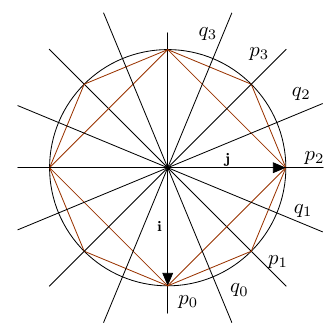}}
\end{figure}

Set
\begin{equation*}
  \DD_{n}=\set{ \triv,\QQ\left(\kk;\frac{2\pi}{n}\right),\QQ\left(\kk;\frac{4\pi}{n}\right), \dotsc, \QQ\left(\vv_{0};\pi\right),\QQ\left(\vv_{1};\pi\right),\dotsc },
\end{equation*}
and
\begin{equation*}
  -\mathbf{r}_{n}\DD_{n}=\set{ -\QQ\left(\kk;\frac{\pi}{n}\right),-\QQ\left(\kk;\frac{3\pi}{n}\right), \dotsc, -\QQ\left(\ww_{0};\pi\right),-\QQ\left(\ww_{1};\pi\right),\dotsc },
\end{equation*}
where $\mathbf{r}_{n}=\displaystyle{\QQ\left(\kk;\frac{\pi}{n}\right)}$. We define
\begin{equation}\label{eq:D2nh}
  \DD_{2n}^h := \DD_{n} \cup \left(-\mathbf{r}_{n}\DD_{n} \right),
\end{equation}
which decomposes as
\begin{equation}\label{eq:diedralh}
  \DD_{2n}^h=\ZZ_{2n}^{-}\biguplus_{j=0}^{n-1} \ZZ_{2}^{p_{j}} \biguplus_{j=0}^{n-1} \ZZ_{2}^{\sigma_{q_{j}}} .
\end{equation}
Note that in this decomposition, there are $n$ subgroups conjugate to $\ZZ_{2}$ and $n$ others conjugate to $\ZZ_{2}^{-}$. The $z$-axis (resp $x$-axis) is said to be the primary (resp. secondary) axis of $\DD_{2n}^h$. For each $g\in \OO(3)$, the primary (resp. secondary) axis of the subgroup $g\DD_{2n}^{h}g^{-1}$ is generated by $g\kk$ (resp. by $g\ii$).

\begin{lem}\label{lem:D2nhClipsZ2mmoins}
  Let $m,n\geq 2$ be two integers. Set $d=\gcd(n,m)$, $d_{2}(m)=\gcd(m,2)$ and
  \begin{equation*}
    i(m)=
    \begin{cases}
      1, & \text{if $m$ is even}, \\
      2, & \text{otherwise}.
    \end{cases}
  \end{equation*}
  Then,
  \begin{itemize}
    \item If $\cfrac{n}{d}$ or $\cfrac{m}{d}$ is even, we have
          \begin{equation*}
            [\DD_{2n}^h]\circledcirc [\ZZ_{2m}^{-}]=\set{\triv,[\ZZ_{d_{2}(m)}],[\ZZ_{i(m)}^{-}],[\ZZ_{d}]};
          \end{equation*}
    \item If $\cfrac{n}{d}$ and $\cfrac{m}{d}$ are odd, we have
          \begin{equation*}
            [\DD_{2n}^h]\circledcirc [\ZZ_{2m}^{-}]=\set{\triv,[\ZZ_{d_{2}(m)}],[\ZZ_{i(m)}^{-}],[\ZZ_{2d}^{-}]}.
          \end{equation*}
  \end{itemize}
\end{lem}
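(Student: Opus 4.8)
The plan is to follow the by-now-standard strategy: decompose $\DD_{2n}^h$ via~\eqref{eq:diedralh} into $\ZZ_{2n}^{-}$ together with its secondary pieces $\ZZ_{2}^{p_{j}}$ and $\ZZ_{2}^{\sigma_{q_{j}}}$, and then analyze the intersection $\Gamma = \DD_{2n}^h\cap g\ZZ_{2m}^{-} g^{-1}$ according to the relative position of the characteristic axes. The crucial simplification is that $\ZZ_{2m}^{-}$ possesses a single characteristic axis, namely its primary ($z$-) axis, so that every nontrivial element of $g\ZZ_{2m}^{-} g^{-1}$ -- rotation or rotoreflection -- lies about the single axis $g\kk$. (Since $-\Idd$ is central in $\OO(3)$, conjugation by $\OO(3)$ coincides with conjugation by $\SO(3)$, so I may take $g\in\SO(3)$.) Consequently $[\Gamma]$ is determined entirely by how $g\kk$ sits relative to the three families of characteristic axes of $\DD_{2n}^h$, which splits the argument into four exhaustive cases.

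First I would treat the case where $g\kk$ equals the primary $z$-axis (as a line). A rotation fixing the $z$-axis is either a rotation about $\kk$ or a half-turn about a horizontal axis; in both situations a short computation with $\mathbf{r}_m = \QQ(\kk;\pi/m)$ shows that $g\ZZ_{2m}^{-} g^{-1} = \ZZ_{2m}^{-}$. Since the only elements of $\DD_{2n}^h$ lying about the $z$-axis are precisely those of $\ZZ_{2n}^{-}$, the intersection collapses to $\Gamma = \ZZ_{2n}^{-}\cap\ZZ_{2m}^{-}$, which by Lemma~\ref{lem:InterMoinsRn} and Corollary~\ref{lem:ClipsZmoins} equals $\ZZ_{2d}^{-}$ when $n/d$ and $m/d$ are both odd and $\ZZ_d$ otherwise. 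This yields the parity-dependent class distinguishing the two cases of the statement.

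Next I would handle the two secondary-axis alignments. If $g\kk$ is one of the $p_j$-axes, then the only nontrivial element of $\DD_{2n}^h$ about that axis is the proper half-turn generating $\ZZ_{2}^{p_{j}}$, and it survives the intersection precisely when $\QQ(\kk;\pi)\in\ZZ_m$, i.e. when $m$ is even; this gives the class $[\ZZ_{d_{2}(m)}]$. If instead $g\kk$ is one of the $q_j$-axes, the only element of $\DD_{2n}^h$ about it is the reflection $\sigma_{q_j} = -\QQ(q_j;\pi)$, and it survives exactly when the rotoreflection $-\QQ(\kk;\pi)$ belongs to $\ZZ_{2m}^{-}$, i.e. when $m$ is odd; recalling Remark~\ref{rem:Z2moins} this gives the class $[\ZZ_{i(m)}^{-}]$. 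In the remaining generic case, where $g\kk$ meets none of the characteristic axes, no nontrivial element can be common and $\Gamma = \triv$.

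Finally I would check that each class is attained by an explicit $g$ (for instance $g=\Idd$ for the first case, and suitable rotations carrying $\kk$ onto a $p_j$- or $q_j$-axis, or into generic position, for the others), so that the clips is exactly the union of the classes found, as claimed. The main obstacle is not any single hard step but the careful bookkeeping of proper versus improper order-two elements: one must verify in the first case that even the orientation-reversing half-turns with $g\kk=-\kk$ preserve $\ZZ_{2m}^{-}$, and in the secondary cases one must correctly match the single available half-turn (resp. reflection) of $\DD_{2n}^h$ against the rotations (resp. rotoreflections) of $g\ZZ_{2m}^{-} g^{-1}$ -- it is exactly this proper/improper distinction that produces the complementary parity conditions ($m$ even for $[\ZZ_{d_{2}(m)}]$, $m$ odd for $[\ZZ_{i(m)}^{-}]$). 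Everything else reduces to the previously established Lemma~\ref{lem:intercy}, Lemma~\ref{lem:InterMoinsRn}, and Corollary~\ref{lem:ClipsZmoins}.
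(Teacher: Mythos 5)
Your proposal is correct and follows essentially the same route as the paper: decompose $\DD_{2n}^h$ via~\eqref{eq:diedralh}, split into the cases where the single characteristic axis $g\kk$ of $g\ZZ_{2m}^{-}g^{-1}$ coincides with the primary axis (reducing to $\ZZ_{2n}^{-}\cap\ZZ_{2m}^{-}$ and Lemma~\ref{lem:ClipsZmoins}), with a $p_j$-axis (yielding $[\ZZ_{d_2(m)}]$ for $m$ even), with a $q_j$-axis (yielding $[\ZZ_{i(m)}^{-}]$ for $m$ odd), or with none (yielding $[\triv]$). Your added care about the proper/improper distinction and about $g$ possibly reversing the axis is a slightly more explicit version of what the paper leaves implicit, but the argument is the same.
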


\begin{proof}
  First of all, if no characteristic axes of $\DD_{2n}^h$ and $g\ZZ_{2m}^{-}g^{-1}$ match, then their intersection reduces to $\triv$.

  We have now to consider three cases:

  \noindent $(1)$ The first case is when $\DD_{2n}^h$ and $g\ZZ_{2m}^{-}g^{-1}$ have the same primary axis. Then, using decompositions \eqref{eq:diedralh} and \eqref{eq:Z2nMoins}, we only have to consider the intersection
  \begin{equation*}
    \ZZ_{2n}^{-}\cap \ZZ_{2m}^{-}.
  \end{equation*}
  This has already been studied in the clips operation $[\ZZ_{2n}^{-}]\circledcirc [\ZZ_{2m}^{-}]$ in Lemma~\ref{lem:ClipsZmoins}, leading to the conjugacy class $[\ZZ_{d}]$ or $[\ZZ_{2d}^{-}]$.

  \noindent $(2)$ The second one is when some secondary axis $p_{j}$ (say $p_{0}$) match with the primary axis of $g\ZZ_{2m}^{-}g^{-1}$, then we only have to consider intersection
  \begin{equation*}
    \ZZ_{2}^{p_{0}}\cap \left(g\ZZ_{2m}^{-}g^{-1}\right)=\ZZ_{2}^{p_{0}}\cap \left(g\ZZ_{m}g^{-1}\right)
  \end{equation*}
  leading to $\ZZ_{2}^{p_{0}}$ if $m$ is even.

  \noindent $(2)$  Finally, we have to consider the case when the primary axis of $g\ZZ_{2m}^{-}g^{-1}$ is $q_{0}$. In that case the problem reduces to the intersection
  \begin{equation*}
    \ZZ_{2}^{\sigma_{q_{0}}}\cap \left(g\ZZ_{2m}^{-}g^{-1}\right)
  \end{equation*}
  leading to the conjugacy class $[\ZZ_{2}^{-}]$ for odd $m$ (see Lemma~\ref{lem:ClipsZmoins}). This concludes the proof.
\end{proof}

The cases $[\DD_{2n}^{h}]\circledcirc [\DD_{n}^{v}]$ and $[\DD_{2n}^{h}]\circledcirc [\DD_{2m}^h]$ are more involving. We start by formulating the following lemma, without proof (see \autoref{fig:dihedral-second-axis} for an example):

\begin{lem}\label{lem:Perp_Axe_Diedh}
  If $n$ is even then there exist $p_k\perp p_{l}$ and $q_r\perp q_s$, where no axes $p_{i},q_{j}$ are perpendicular. If $n$ is odd, there exist $p_{i}\perp q_{j}$ and no axes $p_k,p_{l}$, nor $q_r,q_s$ are perpendicular.
\end{lem}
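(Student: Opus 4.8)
The plan is to reduce every perpendicularity question to a single integer congruence modulo $2n$, obtained by encoding each characteristic axis by the angle it makes with the $x$-axis, taken modulo $\pi$ since the $p_j$ and $q_j$ are unoriented \emph{lines} rather than vectors. From the definitions of $\vv_{j}$ and $\ww_{j}$, the axis $p_j$ sits at angle $\frac{j\pi}{n}$ and $q_j$ at angle $\frac{(2j+1)\pi}{2n}$, for $j=0,\dotsc,n-1$. Two axes at angles $\alpha$ and $\beta$ are orthogonal precisely when $\alpha-\beta\equiv\frac{\pi}{2}\pmod{\pi}$, and I would convert each of the three cases into a congruence whose solvability is governed purely by the parity of $n$.

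First I would treat the $p$--$p$ case: $p_k\perp p_l$ holds iff $\frac{(k-l)\pi}{n}\equiv\frac{\pi}{2}\pmod\pi$, which after dividing by $\pi$ and multiplying by $2n$ reads $2(k-l)\equiv n\pmod{2n}$. The left-hand side is even, so this is solvable only when $n$ is even; and when $n$ is even the choice $l=0$, $k=n/2$ (both admissible since $n\ge 2$) realizes it. An identical computation for the $q$--$q$ case yields the very same congruence $2(r-s)\equiv n\pmod{2n}$, hence the same conclusion, with witness $s=0$, $r=n/2$ when $n$ is even. Next I would treat the mixed case: $p_i\perp q_j$ holds iff $\frac{(2i-2j-1)\pi}{2n}\equiv\frac{\pi}{2}\pmod\pi$, which reduces to $2(i-j)\equiv n+1\pmod{2n}$. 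Again the left-hand side is even, so solvability now requires $n+1$ even, i.e. $n$ odd; and for $n$ odd the choice $j=0$, $i=\frac{n+1}{2}$, which lies in $\set{0,\dotsc,n-1}$ for $n\ge 3$, produces the orthogonal pair.

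Collecting the three statements gives exactly the dichotomy claimed: when $n$ is even the first two congruences are solvable while the third is not, and when $n$ is odd precisely the reverse holds. The argument is entirely elementary, so there is no deep obstacle; the only point demanding care is the passage to angles \emph{modulo $\pi$} rather than modulo $2\pi$ — that is, remembering that the objects are axes and not oriented vectors — since this is exactly what turns each perpendicularity into a congruence modulo $2n$ and makes the parity alternative surface. One should also verify that the exhibited witnesses fall within the index range $0\le j\le n-1$, which is immediate but worth recording.
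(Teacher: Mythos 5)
Your argument is correct. The paper states this lemma explicitly \emph{without proof}, so there is nothing to compare against; your congruence computation supplies exactly the missing verification. The reduction is sound: all the axes $p_j$, $q_j$ lie in the $xy$-plane (each is obtained by rotating $\ii$ about $\kk$), so perpendicularity of two such axes is equivalent to their planar angles differing by $\tfrac{\pi}{2}$ modulo $\pi$, and your three congruences $2(k-l)\equiv n$, $2(r-s)\equiv n$, $2(i-j)\equiv n+1 \pmod{2n}$ correctly encode the three cases, with the parity of $n$ deciding solvability in each. The witnesses you exhibit ($k=n/2$ for $n$ even, $i=(n+1)/2$ for $n$ odd $\ge 3$) lie in the admissible index range, and a sanity check against the figures for $\DD_{6}\supset\DD_{3}$ and $\DD_{8}\supset\DD_{4}$ confirms the dichotomy.
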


\begin{lem}\label{lem:Dmh_Et_Dmv}
  Let $m,n\geq 2$ be two integers. Set $d_{2}(m):=\gcd(m,2)$,
  \begin{equation*}
    i(m,n) :=
    \begin{cases}
      2, & \text{if $m$ is even and $n$ is odd}, \\
      1, & \text{otherwise},
    \end{cases}
  \end{equation*}
  and
  \begin{equation*}
    i(m) :=3-\gcd(2,m)=
    \begin{cases}
      1, & \text{if $m$ is even}, \\
      2, & \text{if $m$ is odd}.
    \end{cases}
  \end{equation*}
  Then, we have
  \begin{equation*}
    [\DD_{2n}^h]\circledcirc [\DD_{m}^{v}]=\set{[\triv],\left[\ZZ_{i(m)}^{-}\right],\left[ \ZZ_{d_{2}(m)}\right],\left[ \DD_{i(m,n)}^{v}\right],\left[ \ZZ_{d}\right],\left[\DD_{d}^{v}\right]}.
  \end{equation*}
\end{lem}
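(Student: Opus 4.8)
The plan is to write $\Gamma := \DD_{2n}^h \cap \left(g\DD_m^v g^{-1}\right)$ for $g\in\OO(3)$ and to reconstruct $[\Gamma]$ from its rotation subgroup together with the observation that every improper element that can occur is a genuine reflection. Since $-\Idd$ lies in neither $\DD_{2n}^h$ nor $\DD_m^v$, the group $\Gamma$ is of type I or type III, so it suffices to control (i) its proper part $\Gamma\cap\SO(3)$ and (ii) whether it carries an improper element. For (i), intersecting with the subgroup $\SO(3)$ commutes with the remaining operations, so
\begin{equation*}
  \Gamma\cap\SO(3)=\left(\DD_{2n}^h\cap\SO(3)\right)\cap\left(g\DD_m^v g^{-1}\cap\SO(3)\right)=\DD_n\cap\left(g\ZZ_m g^{-1}\right),
\end{equation*}
whose conjugacy class, by Lemma~\ref{lem:Clis_Diedre_Cyclic}, lies in $\set{[\triv],[\ZZ_{d_{2}(m)}],[\ZZ_d]}$. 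In particular the proper part of $\Gamma$ is always cyclic.

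Next I would pin down the type-III possibilities. From the decomposition~\eqref{eq:diedralv}, every improper element of $g\DD_m^v g^{-1}$ is a pure reflection $\sigma_{gb_j}$, hence so is every improper element of $\Gamma$. Comparing with~\eqref{eq:Z2nMoins}, the improper part of a group $\ZZ_{2k}^-$ contains a genuine rotoreflection as soon as $k\geq 2$; therefore a type-III $\Gamma$ with cyclic proper part $\ZZ_k$ can only be $\ZZ_2^-$ (when $k=1$) or $\DD_k^v$ (when $k\geq 2$), the non-cyclic proper parts $\DD_{n'},\tetra,\SO(2)$ of the other type-III families being excluded at once. Combining with (i), the only candidate classes are $[\triv]$, $[\ZZ_{d_{2}(m)}]$, $[\ZZ_d]$ and their reflection-enriched companions $[\ZZ_2^-]$, $[\DD_{d_{2}(m)}^{v}]$, $[\DD_d^v]$.

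It then remains to decide, configuration by configuration, which of these candidates are actually attained, and this is the technical core. I would organise the discussion by the position of the primary axis $g\kk$ of $g\DD_m^v g^{-1}$ relative to the characteristic axes of $\DD_{2n}^h$ furnished by~\eqref{eq:diedralh}. Coincidence of $g\kk$ with the $z$-axis yields, after a rotation that does or does not align a mirror normal $gb_j$ with one of the reflection axes $q_k$, the classes $[\ZZ_d]$ and $[\DD_d^v]$ (degenerating to $[\triv]$ and $[\ZZ_2^-]$ when $d=1$). Coincidence of $g\kk$ with a secondary rotation axis $p_j$ of $\DD_{2n}^h$ produces $[\ZZ_{d_{2}(m)}]$ and, when a further mirror can simultaneously be aligned, a class $[\DD_2^v]$; all remaining positions give $[\triv]$ or a single surviving reflection.

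The hard part will be the exact realizability count in this last step, since the mirror normals $b_j$ of $\DD_m^v$ are spaced by $\pi/m$ while the reflection axes $q_k$ and the rotation axes $p_j$ of $\DD_{2n}^h$ interleave with spacing $\pi/n$ and offset $\pi/(2n)$, and the horizontal reflection $\sigma_\kk$ is present precisely when $n$ is odd. Deciding simultaneously how many of these alignments can coexist with a prescribed proper part is exactly what should cut the candidate list down to the stated one, fixing the parity-dependent indices $i(m)$ and $i(m,n)$. I expect to control it using Lemma~\ref{lem:Perp_Axe_Diedh} on the perpendicular pairs among the $p$- and $q$-axes, together with the $\ZZ_{2k}^-$-alignment analysis already carried out for the clips $[\ZZ_{2n}^-]\circledcirc[\ZZ_{2m}^-]$ in Corollary~\ref{lem:ClipsZmoins}. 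Once each candidate class is shown to occur under, and only under, the prescribed parity conditions, the proof is complete.
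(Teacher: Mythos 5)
Your reduction is sound and in places cleaner than the paper's: splitting $\Gamma=\DD_{2n}^h\cap\left(g\DD_m^v g^{-1}\right)$ into its proper part $\DD_n\cap\left(g\ZZ_m g^{-1}\right)$ (handled by Lemma~\ref{lem:Clis_Diedre_Cyclic}) and observing that every improper element of $g\DD_m^v g^{-1}$ is a genuine reflection --- so that a type-III intersection with cyclic proper part can only be $\ZZ_2^-$ or some $\DD_k^v$ --- correctly narrows the candidates to the six slots appearing in the statement. The paper instead works directly from the decompositions \eqref{eq:diedralv} and \eqref{eq:diedralh} and enumerates the positions of the primary axis of $g\DD_m^v g^{-1}$ relative to $(Oz)$, the $p_j$ and the $q_k$; your structural filter reaches the same candidate list with less enumeration.

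The proof is nevertheless incomplete, because the entire parity dependence --- that $[\DD_2^v]$ is attained exactly when $m$ is even and $n$ is odd, and $[\ZZ_2^-]$ exactly when $m$ is odd, i.e.\ the indices $i(m,n)$ and $i(m)$ --- is precisely what the lemma asserts beyond the candidate list, and you explicitly defer it (``the hard part will be the exact realizability count\dots I expect to control it\dots''). That deferred step is the technical core of the paper's argument: one must show that aligning a mirror normal $gb_j$ with some $q_k$ while the primary axis of $g\DD_m^v g^{-1}$ sits on a $p_0$ forces $q_k\perp p_0$, hence $n$ odd by Lemma~\ref{lem:Perp_Axe_Diedh}, and that the resulting group is $\DD_2^v$ or $\ZZ_2^-$ according to the parity of $m$. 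Nor is the step routine: your own framework shows that a single common reflection $\sigma_{q_0}=\sigma_{gb_0}$, with the primary axis of $g\DD_m^v g^{-1}$ in generic position inside the plane normal to $q_0$, already yields $\Gamma\cong\ZZ_2^-$ with no visible constraint on the parity of $m$, which you must reconcile with the stated index $i(m)$ (the paper's own treatment of this configuration, case (2)(c) of its proof, is the one place where it argues only by reference to ``previous cases''). Until that realizability and non-realizability analysis is actually carried out, the lemma is not proved.
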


\begin{proof}
  The only non trivial cases are when $g\DD_{m}^{v}g^{-1}$ and $\DD_{2n}^h$ have no matching characteristic axes. Now we have to distinguish wether the principal axis $a$ of $g\DD_{m}^{v}g^{-1}$ is $(Oz)$ or not:

  \begin{enumerate}[(1)]
    \item Let first suppose that $a=(Oz)$. In that case, we need to compute the intersection $\DD_{2n}^h \cap \DD_{m}^{v}$. From~\eqref{eq:diedralv} and \eqref{eq:diedralh}, this reduces to study the following three intersections
          \begin{equation*}
            \ZZ_{2n}^{-}\cap \ZZ_{m},\quad \ZZ_{2n}^{-}\cap \ZZ_{2}^{\sigma_{b_{j}}},\quad \ZZ_{2}^{\sigma_{q_k}}\cap \ZZ_{2}^{\sigma_{b_{j}}}.
          \end{equation*}
          Now:
          \begin{enumerate}[(a)]
            \item The first intersection, $\ZZ_{2n}^{-}\cap \ZZ_{m}$, reduces to $\ZZ_{n}\cap \ZZ_{m}=\ZZ_{d}$ (from~\eqref{eq:Z2nMoins} and Lemma~\ref{lem:intercy}).
            \item The second one, $\ZZ_{2n}^{-}\cap \ZZ_{2}^{\sigma_{b_{j}}}$, reduces to $\triv$, since primary axes of $\ZZ_{2n}^{-}$ and $\ZZ_{2}^{\sigma_{b_{j}}}$ (conjugate to $\ZZ_{2}^{-}$) do not match.
            \item The last one, $\ZZ_{2}^{\sigma_{q_k}}\cap \ZZ_{2}^{\sigma_{b_{j}}}$, can reduce to some $\ZZ_{2}^{\sigma_{q_k}}$ if $b_{0}=q_{0}$. In that case, $\DD_{2n}^h \cap \DD_{m}^{v}$ contains $\ZZ_{d}$ and some $\ZZ_{2}^{\sigma_{q_k}}$, which generate $\DD_{d}^{v}$ (see~\eqref{eq:diedralv}).
          \end{enumerate}
          This first case thus leads to $\ZZ_d$ or $\DD_d^v$.
    \item Consider now the case when $a\neq (Oz)$. Thus the intersections to be considered are
          \begin{equation*}
            \ZZ_{2n}^{-}\cap \ZZ_{2}^{\sigma_{b'_{0}}},\quad \ZZ_{2}^{p_{j}}\cap \ZZ_{m}^a,\quad \ZZ_{2}^{\sigma_{q_k}}\cap \ZZ_{2}^{\sigma_{b'_{j}}}.
          \end{equation*}
          where the $b'_j$ axes are the secondary axis of $g\DD_{m}^{v}g^{-1}$. Now:
          \begin{enumerate}[(a)]
            \item First suppose that $a=p_0$ (for instance) and all the other axis are different. Then, $\ZZ_{2}^{p_{j}}\cap \ZZ_{m}^a=\ZZ_{d_2(m)}^a$.
            \item Suppose now that $a=p_0$ and $b'j=q_k$ for some couple $(k,j)$, so that $\ZZ_{2}^{\sigma_{q_k}}\cap \ZZ_{2}^{\sigma_{b'_{j}}}=\ZZ_{2}^{\sigma_{q_k}}$. As $q_k\perp p_0$, we deduce from Lemma~\ref{lem:Perp_Axe_Diedh} that $n$ is odd. All depend on $m$ parity: if $m$ is even then $\Gamma$ contains $\ZZ_{2}^{p_0}$ and $\ZZ_{2}^{\sigma_{q_k}}$, and we obtain some subgroup conjugate to $\DD_{2}^{v}$. If $m$ is odd, then $\Gamma$ reduces to $\ZZ_{2}^{\sigma_{q_k}}$, which is conjugate to $\ZZ_{2}^-$.
            \item Finally, suppose that $a\neq p_j$ for all $j$ (and recall that $a\neq (Oz)$), so that the intersections to be considered are
                  \begin{equation*}
                    \ZZ_{2n}^{-}\cap \ZZ_{2}^{\sigma_{b'_{0}}},\quad \ZZ_{2}^{\sigma_{q_k}}\cap \ZZ_{2}^{\sigma_{b'_{j}}}.
                  \end{equation*}	
                  We thus only obtain some subgroups already considered in the previous cases, which conclude the proof.
          \end{enumerate}
  \end{enumerate}
\end{proof}

\begin{lem}\label{lem:deuxdiedreh}
  Let $m,n\geq 2$ be two integers. Set $d=\gcd(n,m)$ and
  \begin{equation*}
    \Delta=[\DD_{2n}^h]\circledcirc [\DD_{2m}^h].
  \end{equation*}
  Then, $[\triv]\subset \Delta$ and:
  \begin{itemize}
    \item For every integer $d$:
          \begin{itemize}
            \item[$\diamond$] If $m$ and $n$ are even, then $\Delta \supset \set{[\ZZ_{2}],[\DD_{2}]}$;
            \item[$\diamond$] If $m$ and $n$ are odds, then $\Delta \supset \set{[\ZZ_{2}^{-}]}$;
            \item[$\diamond$] Otherwise, $\Delta \supset \set{[\ZZ_{2}],[\DD_{2}^{v}]}$;
          \end{itemize}
    \item If $d=1$, then
          \begin{itemize}
            \item[$\diamond$] If $m$ and $n$ are odds, then $\Delta \supset \set{[\DD_{2}^{v}]}$;
            \item[$\diamond$] Otherwise $m$ or $n$ is even and $\Delta \supset \set{[\ZZ_{2}],[\ZZ_{2}^{-}]}$;
          \end{itemize}
    \item If $d\neq 1$, then
          \begin{itemize}
            \item[$\diamond$] If $\dfrac{m}{d}$ and $\dfrac{n}{d}$ are odds, then $\Delta \supset \set{[\ZZ_{2d}^{-}],[\DD_{2d}^h]}$;
            \item[$\diamond$] Otherwise, $\dfrac{m}{d}$ or $\dfrac{n}{d}$ is even and $\Delta \supset \set{ [\ZZ_{d}],[\DD_{d}],[\DD_{d}^{v}]}$;
          \end{itemize}
  \end{itemize}
\end{lem}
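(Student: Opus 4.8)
The plan is to analyze the intersection $\Gamma := \DD_{2n}^h \cap \left(g\DD_{2m}^h g^{-1}\right)$ for $g\in\OO(3)$ by separating its proper and improper elements. Since $\DD_{2n}^h\cap\SO(3)=\DD_n$ and the improper part of $\DD_{2n}^h$ is $-\mathbf{r}_n\DD_n$ (see~\eqref{eq:D2nh}), the proper part of $\Gamma$ is $\DD_n\cap\left(g\DD_m g^{-1}\right)$, governed by the $\SO(3)$-clips $[\DD_n]\circledcirc[\DD_m]$, while its improper part is controlled by the rotoreflections $-\QQ\left(\kk;\frac{(2j+1)\pi}{n}\right)$ and the reflections $\sigma_{q_j}$ appearing in decomposition~\eqref{eq:diedralh}. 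As in the previous proofs, I would reduce the discussion to the relative position of the two primary ($z$-)axes, and for each conjugacy class listed in the statement exhibit an explicit $g$ realizing it: the assertion is a family of inclusions $\Delta\supset\set{\dots}$, so apart from the ambient poset of~\autoref{fig:O3-lattice} no upper bound is needed.

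First I would treat the case where both primary axes coincide, which produces the ``large'' classes of the $d\neq 1$ item. Here $g$ may be taken to be a rotation $\QQ(\kk;\phi)$ about $z$ (the case where $g$ also flips $z$ being symmetric). The vertical part of $\Gamma$ is then $\ZZ_{2n}^-\cap\left(g\ZZ_{2m}^- g^{-1}\right)$, which by Lemma~\ref{lem:ClipsZmoins} yields $\ZZ_{2d}^-$ when $n/d$ and $m/d$ are both odd and $\ZZ_d$ otherwise. Choosing $\phi$ so that the rotation axes $p_j$ and the reflection axes $q_j$ of the two groups match in a compatible way, the surviving secondary axes promote this to $\DD_{2d}^h$ in the odd case and to $\DD_d$ or $\DD_d^v$ in the even case, according to whether the matched secondary axes are of the same type ($p\leftrightarrow p$, $q\leftrightarrow q$) or of opposite type; the smaller classes $\ZZ_d$ and $\ZZ_2$ arise when $\phi$ aligns the primary axes only. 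The key inputs are Lemma~\ref{lem:InterMoinsRn}, which through the parity of $n/d,m/d$ decides whether any rotoreflection survives, and the alternation of the $p$- and $q$-axes at successive multiples of $\frac{\pi}{2n}$.

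Next I would treat the case where the primary axis of $g\DD_{2m}^h g^{-1}$ lies in the $xy$-plane, which produces the order $\le 4$ classes of the ``for every $d$'' and ``$d=1$'' items. Here the primary axis of the second group is matched against a secondary axis of the first, and one must track whether the matched axes carry order-two rotations (the $p_j$, lying in the proper part $\DD_n$) or reflections (the $q_j$). Lemma~\ref{lem:Perp_Axe_Diedh} dictates which pairs of secondary axes can be simultaneously perpendicular: for $n$ even there exist $p_k\perp p_l$ and $q_r\perp q_s$, whereas for $n$ odd only $p_i\perp q_j$ occur. Combining this for both groups, I would read off $\DD_2$ and $\ZZ_2$ when both $m,n$ are even (two perpendicular rotation axes survive), $\ZZ_2^-$ or $\DD_2^v$ when both are odd (a reflection axis meets a rotation axis), and the mixed classes $\set{\ZZ_2,\DD_2^v}$, $\set{\ZZ_2,\ZZ_2^-}$ in the remaining parity combinations; the $d=1$ refinement simply records that when no nontrivial vertical rotation is shared, these are the classes produced.

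The main obstacle will be the parity bookkeeping that distinguishes, among configurations with the same matched axes, a proper subgroup ($\DD_2$, $\ZZ_2$) from a reflection subgroup ($\ZZ_2^-$, $\DD_2^v$) and from a rotoreflection subgroup ($\ZZ_{2d}^-$, $\DD_{2d}^h$). Concretely, whether a matched order-two axis contributes a rotation or a reflection depends on whether the corresponding element of $-\mathbf{r}_n\DD_n$ is the reflection $\sigma_{q_j}$ or a rotoreflection, and whether two of them survive together so as to generate a dihedral rather than a cyclic group; both are governed entirely by the parities of $n$, $m$, $n/d$ and $m/d$ through Lemmas~\ref{lem:InterMoinsRn} and~\ref{lem:Perp_Axe_Diedh}. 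Once these parity cases are separated, verifying each inclusion reduces to an explicit choice of $g$ together with a direct application of Lemma~\ref{lem:intercy} and the computed proper and vertical parts.
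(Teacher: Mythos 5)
Your proposal is correct and follows essentially the same route as the paper's (very terse) sketch: decompose $\DD_{2n}^h$ via \eqref{eq:D2nh}--\eqref{eq:diedralh}, reduce to pairwise intersections of the vertical part $\ZZ_{2n}^{-}$, the rotation axes $p_{j}$ and the reflection axes $q_{j}$, and conclude with Lemmas~\ref{lem:ClipsZmoins}, \ref{lem:InterMoinsRn} and~\ref{lem:Perp_Axe_Diedh} together with a discussion of which characteristic axes match. Your case organization is in fact more explicit than the paper's own proof.
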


\begin{proof}[Sketch of proof]
  We consider decomposition~\eqref{eq:D2nh}. If no characteristic axes $\DD_{2n}^{h}$ and $g\DD_{2m}^{h}g^{-1}$ match, then their intersection reduces to $\triv$. Otherwise, from~\eqref{eq:D2nh} it reduces to
  \begin{align*}
    \ZZ_{2n}^{-}\cap \left(g\ZZ_{2m}^{-}g^{-1}\right)             & ,\quad \ZZ_{2n}^{-}\cap \left(g\ZZ_{2}^{\sigma_{q'_k}}g^{-1}\right),\quad \ZZ_{2}^{p_{j}}\cap \left(g\ZZ_{2}^{p'_k}g^{-1}\right), \\
    \ZZ_{2}^{\sigma_{q_{j}}}\cap \left(g\ZZ_{2m}^{-}g^{-1}\right) & ,\quad \ZZ_{2}^{\sigma_{q_{j}}}\cap \left(g\ZZ_{2}^{\sigma_{q'_k}}g^{-1}\right),
  \end{align*}
  where all $\ZZ_{2}^{\sigma_{q_{j}}},\ZZ_{2}^{\sigma_{q'_k}}$ are subgroups conjugate to $\ZZ_{2}^{-}$.

  Now all these intersections have already been studied in the clips operation $[\ZZ_{2r}^{-}] \circledcirc [\ZZ_{2s}^{-}]$. We can thus use Lemma~\ref{lem:ClipsZmoins}, argue on the characteristic axes and Lemma~\ref{lem:Perp_Axe_Diedh}, to conclude the proof in each case.
\end{proof}

\subsection{$\octa^{-}$ subgroup}
\label{subsubsec:OMoins}

Consider the couple $\tetra \subset \octa$ in~\autoref{tab:ClassIII} and the following decompositions
\begin{equation*}
  \octa=\biguplus_{i=1}^3 \ZZ_4^{\afc_{i}} \biguplus_{j=1}^4 \ZZ_{3}^{\avc_{j}} \biguplus_{l=1}^6 \ZZ_{2}^{\aec_{l}},
\end{equation*}
and
\begin{equation*}
  \tetra=\biguplus_{j=1}^4 \ZZ_{3}^{\avt_{j}} \biguplus \ZZ_{2}^{\aet_{1}}\biguplus \ZZ_{2}^{\aet_{2}}\biguplus \ZZ_{2}^{\aet_{3}},\quad \ZZ_{2}^{\aet_{i}}\subset \ZZ_4^{\afc_{i}},\quad i=1,2,3.
\end{equation*}
This leads (see~\cite{IG1984} for details) to the decomposition
\begin{equation}\label{eq:omoins}
  \octa^{-}:=\biguplus_{i=1}^3(\ZZ_4^{\afc_{i}})^{-}\biguplus_{j=1}^4 \ZZ_{3}^{\avc_{j}}\biguplus_{l=1}^6 \ZZ_{2}^{\sigma_{\aec_{l}}},
\end{equation}
where $(\ZZ_4^{\afc_{i}})^{-}$ is the subgroup conjugate to $\ZZ_{4}^{-}$ with $\afc_{i}$ as primary axis. Note also that $\ZZ_{2}^{\sigma_{e_{l}}}$ are subgroups conjugate to $\ZZ_{2}^{-}$ with $\aec_{l}$ as primary axis.

Using this decomposition~\eqref{eq:omoins}, and those of type III closed $\OO(3)$ subgroups previously mentioned directly lead to the following corollaries.

\begin{cor}
  Let $n\geq 2$ be an integer. Set $d_{2}(n)=\gcd(n,2)$ and $d_{3}(n)=\gcd(3,n)$. Then,
  \begin{itemize}
    \item if $n$ is odd, we have
          \begin{equation*}
            [\octa^{-}]\circledcirc [\ZZ_{2n}^{-}]=\set{[\triv],[\ZZ_{2}^{-}],[\ZZ_{d_{3}(n)}]};
          \end{equation*}
    \item if $n=2+4k$ for $k\in \mathbb{N}$, we have
          \begin{equation*}
            [\octa^{-}]\circledcirc [\ZZ_{2n}^{-}]=\set{[\triv],[\ZZ_4^{-}],[\ZZ_{d_{3}(n)}]};
          \end{equation*}
    \item if $n$ is even and $4\nmid n$, we have
          \begin{equation*}
            [\octa^{-}]\circledcirc [\ZZ_{2n}^{-}]=\set{[\triv],[\ZZ_{2}],[\ZZ_{d_{3}(n)}]}.
          \end{equation*}
  \end{itemize}
  Moreover in all cases, we have
  \begin{equation*}
    [\octa^{-}] \circledcirc [\DD_{n}^{v}] = \set{[\triv],[\ZZ_{2}^{-}],[\ZZ_{d_{3}(n)}],[\DD_{d_{3}(n)}^{v}],[\ZZ_{d_{2}(n)}],[\DD_{d_{2}(n)}^{v}]}.
  \end{equation*}
\end{cor}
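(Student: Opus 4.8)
The plan is to prove the two identities separately, each time grounding the computation on the direct-union decomposition~\eqref{eq:omoins} of $\octa^{-}$. The guiding principle, exactly as in the $\SO(3)$ proofs, is that a nontrivial element of $\octa^{-}\cap gKg^{-1}$ must lie in one of the factors of~\eqref{eq:omoins}, so the intersection is governed by the way the characteristic axes of $\octa^{-}$ meet those of $K$: the three face axes $\afc_i$ (each carrying a factor $(\ZZ_4^{\afc_i})^{-}$ conjugate to $\ZZ_4^{-}$), the four vertex axes $\avc_j$ (each carrying $\ZZ_3^{\avc_j}$), and the six edge axes $\aec_l$ (each carrying a reflection $\ZZ_2^{\sigma_{\aec_l}}$ conjugate to $\ZZ_2^{-}$).

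For $[\octa^{-}]\circledcirc[\ZZ_{2n}^{-}]$ the analysis is short, since $\ZZ_{2n}^{-}$ has a single primary axis $a=g\kk$: the intersection is $[\triv]$ unless $a$ coincides with a characteristic axis of $\octa^{-}$, and then collapses onto a single factor. First I would treat the three alignments. If $a$ is a face axis, the intersection is a conjugate of $\ZZ_4^{-}\cap\ZZ_{2n}^{-}$ with matching primary axes, evaluated by Corollary~\ref{lem:ClipsZmoins} (writing the first factor as $\ZZ_{2\cdot 2}^{-}$): this gives $[\triv]$ for $n$ odd, $[\ZZ_4^{-}]$ for $n\equiv 2\pmod 4$, and $[\ZZ_2]$ for $4\mid n$. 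If $a$ is a vertex axis, only the rotational part survives and $\ZZ_3^{\avc_j}\cap\ZZ_{2n}^{-}=\ZZ_3\cap\ZZ_n=\ZZ_{d_3(n)}$ by Lemma~\ref{lem:typeIetIII} and Lemma~\ref{lem:intercy}. If $a$ is an edge axis, the intersection is a conjugate of $\ZZ_2^{-}\cap\ZZ_{2n}^{-}$, giving $[\ZZ_2^{-}]$ for $n$ odd and $[\triv]$ for $n$ even, again by Corollary~\ref{lem:ClipsZmoins}. Assembling these contributions according to the parity of $n$ reproduces the three displayed cases.

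For $[\octa^{-}]\circledcirc[\DD_n^{v}]$ I would combine~\eqref{eq:omoins} with the decomposition~\eqref{eq:diedralv} of $\DD_n^{v}=\ZZ_n\biguplus_j\ZZ_2^{\sigma_{b_j}}$, whose improper elements are the $n$ reflections through the vertical planes normal to its secondary axes $b_j$. The rotational part is settled once and for all: $\Gamma\cap\SO(3)=\tetra\cap g\ZZ_n g^{-1}$, whose class lies in $\{[\triv],[\ZZ_{d_2(n)}],[\ZZ_{d_3(n)}]\}$ by Corollary~\ref{lem:Clips_Tetra_Cyclic}. The improper part only adds genuine reflections, and a reflection of $g\DD_n^{v}g^{-1}$ lies in $\octa^{-}$ precisely when its mirror plane is one of the six planes $\sigma_{\aec_l}$, i.e.\ when a secondary axis $gb_j$ falls on an edge axis $\aec_l$. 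The key geometric input is then that each vertex axis is orthogonal to exactly three edge axes, equally spaced in the plane they span, while each face axis is orthogonal to exactly two edge axes, themselves orthogonal; matching three (resp.\ two) secondary planes of $\DD_n^{v}$ with these therefore requires $3\mid n$ (resp.\ $n$ even). Hence a primary axis on a vertex axis with $3\mid n$ upgrades $[\ZZ_{d_3(n)}]$ to $[\DD_{d_3(n)}^{v}]$; a primary axis on a face axis with $n$ even upgrades $[\ZZ_{d_2(n)}]$ to $[\DD_{d_2(n)}^{v}]$; and a single matched reflection contributes $[\ZZ_2^{-}]$. Together with the generic $[\triv]$ this yields exactly the six listed classes.

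The $\ZZ_{2n}^{-}$ computation is essentially bookkeeping on top of Corollary~\ref{lem:ClipsZmoins}; the delicate part is the $\DD_n^{v}$ computation. The hard part will be the completeness and sharpness of the case analysis: I must verify that every admissible $g$ falls into one of the configurations above (in particular, no rotational content arises along edge axes, since these are \emph{not} rotation axes of $\tetra$), that the reflection matchings are genuinely realizable under the stated divisibility conditions, and --- crucially --- that no further class (a dihedral group larger than $\DD_3^{v}$, or a spurious $[\ZZ_2]$) can occur. The cleanest way to close this is to bound $[\Gamma]$ a priori by the poset of closed subgroups of $\octa^{-}$ in Figure~\ref{fig:cubic-sub-lattice}, and then to exhibit for each of the six classes an explicit rotation $g$ realizing it.
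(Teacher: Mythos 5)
Your proposal is correct and follows exactly the route the paper intends: the paper's own ``proof'' is the single remark that the decomposition~\eqref{eq:omoins} of $\octa^{-}$ together with the previously established clips computations (Lemma~\ref{lem:intercy}, Corollary~\ref{lem:ClipsZmoins}, Corollary~\ref{lem:Clips_Tetra_Cyclic}) directly yields the corollary, and your case analysis on the face, vertex and edge axes is precisely the bookkeeping that sentence leaves implicit. Your added geometric observations (three mutually $60^{\circ}$ edge axes orthogonal to each vertex axis, two orthogonal edge axes orthogonal to each face axis, and the fact that any two edge axes are coplanar with a characteristic normal) correctly close the completeness gap the paper glosses over.
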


\begin{cor}
  Let $n\geq 2$ be an integer and $d_{3}(n):=\gcd(n,3)$.
  \begin{itemize}
    \item If $n$ is even and $n=2+4k$ for $k\in \mathbb{N}$, then we have
          \begin{equation*}
            [\octa^{-}]\circledcirc [\DD_{2n}^{h}]=\set{[\triv],[\ZZ_4^{-}],[\DD_4^h],[\ZZ_{d_{3}(n)}],[\DD_{d_{3}(n)}^{v}]};
          \end{equation*}
    \item if $n$ is even and $4\mid n$, then we have
          \begin{equation*}
            [\octa^{-}]\circledcirc [\DD_{2n}^{h}]=\set{[\triv],[\ZZ_{2}],[\DD_{2}],[\DD_{2}^{v}],[\ZZ_{d_{3}(n)}],[\DD_{d_{3}(n)}^{v}]}.
          \end{equation*}
    \item if $n$ is odd, then we have
          \begin{equation*}
            [\octa^{-}]\circledcirc [\DD_{2n}^{h}]=\set{[\triv],[\ZZ_{2}],[\ZZ_{2}^{-}],[\DD_{2}],[\DD_{2}^{v}],[\ZZ_{d_{3}(n)}],[\DD_{d_{3}(n)}^{v}]}.
          \end{equation*}
  \end{itemize}
\end{cor}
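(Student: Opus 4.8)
The plan is to proceed exactly as in the previous type III computations: substitute the two decompositions \eqref{eq:omoins} and \eqref{eq:diedralh} into $\octa^{-}\cap\left(g\DD_{2n}^{h}g^{-1}\right)$ and reduce it to intersections of elementary pieces that have already been treated. Recall that $\octa^{-}$ carries three mutually orthogonal \emph{face axes} $\afc_{i}$, each the primary axis of a subgroup conjugate to $\ZZ_{4}^{-}$, four \emph{vertex axes} $\avc_{j}$, each the primary axis of a $\ZZ_{3}$, and six \emph{edge axes} $\aec_{l}$, each the primary axis of a reflection group conjugate to $\ZZ_{2}^{-}$; moreover the order-two rotations about the $\afc_{i}$ together with the reflections about the $\aec_{l}$ assemble, around any fixed vertex axis, into a copy of $\DD_{3}^{v}$. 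On the other side, $g\DD_{2n}^{h}g^{-1}$ has primary axis $g\kk$ bearing $\ZZ_{2n}^{-}$, secondary rotation axes $gp_{j}$, and secondary reflection axes $gq_{j}$.

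The first step is to record the arithmetic on the primary axis of $\ZZ_{2n}^{-}$, which is exactly what produces the trichotomy on $n$. By \eqref{eq:Z2nMoins}, $\ZZ_{2n}^{-}$ contains the order-four rotoreflection $-\QQ(\kk;\pi/2)$ precisely when $n\equiv 2\pmod 4$, the proper rotation $\QQ(\kk;\pi/2)$ precisely when $4\mid n$, and the reflection $\sigma_{\kk}=-\QQ(\kk;\pi)$ precisely when $n$ is odd. Since $\octa^{-}$ contains along a face axis only an order-two proper rotation together with the two order-four rotoreflections $-\QQ(\afc_{i};\pi/2)$, aligning $g\kk$ with some $\afc_{i}$ yields $[\ZZ_{4}^{-}]$ when $n\equiv 2\pmod 4$ (the rotoreflections coincide), $[\ZZ_{2}]$ when $4\mid n$ (the proper order-four rotation has no counterpart in $\octa^{-}$), and $[\ZZ_{2}^{-}]$ when $n$ is odd (the reflection $\sigma_{\kk}$ matches an edge reflection). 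These one-axis intersections are furnished directly by Lemma~\ref{lem:intercy} and Corollary~\ref{lem:ClipsZmoins}, just as in the computation of $[\octa^{-}]\circledcirc[\ZZ_{2n}^{-}]$ above.

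The core of the argument is then a finite alignment analysis organized by the image $g\kk$ of the primary axis: it may land on a face axis $\afc_{i}$, a vertex axis $\avc_{j}$, an edge axis $\aec_{l}$, or in generic position. If $g\kk$ meets a vertex axis and $3\mid n$, the $\ZZ_{3}$ survives, and by additionally matching the secondary reflection axes $gq_{j}$ with edge axes through that vertex one recovers the ambient $\DD_{3}^{v}$ of $\octa^{-}$, giving $[\ZZ_{d_{3}(n)}]$ and $[\DD_{d_{3}(n)}^{v}]$. If $g\kk$ meets a face axis, the primary-axis dictionary above applies, and the secondary axes $gp_{j},gq_{j}$ are then matched against the remaining face axes and edge reflections to promote the cyclic outcomes to the dihedral classes recorded in each case. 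For every class appearing in the statement I would exhibit a single explicit $g\in\SO(3)$ realizing it and invoke a maximality argument in the poset of \autoref{fig:cubic-sub-lattice} to fix its conjugacy class, exactly in the style of the $\SO(3)$ proofs.

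The main obstacle is the faithful identification of the genuinely type III intersections, notably $[\DD_{4}^{h}]$ and $[\DD_{2}^{v}]$, where one must follow the proper rotations together with the surviving reflections and rotoreflections on several axes at once. Here the perpendicularity bookkeeping of Lemma~\ref{lem:Perp_Axe_Diedh} — deciding which of the secondary axes $p_{j},q_{j}$ can be mutually orthogonal — is decisive, since it controls in each parity case for $n$ precisely which dihedral classes are attainable. The remaining task, verifying that the listed classes are exhaustive, is the same routine enumeration over the finitely many admissible alignments as in the preceding lemmas.
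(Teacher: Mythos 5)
Your strategy is the one the paper itself (tacitly) relies on: substitute the decompositions \eqref{eq:omoins} and \eqref{eq:diedralh}, reduce everything to pairwise intersections of the elementary pieces already handled in Lemma~\ref{lem:intercy}, Corollary~\ref{lem:ClipsZmoins} and Lemma~\ref{lem:Perp_Axe_Diedh}, and enumerate the alignments of characteristic axes; the paper offers no more than the phrase ``directly lead to the following corollaries'', so in that sense you match its approach. Your primary-axis dictionary is also correct: $-\QQ(\kk;\pi/2)\in\ZZ_{2n}^{-}$ iff $n\equiv 2\pmod 4$, $\QQ(\kk;\pi/2)\in\ZZ_{2n}^{-}$ iff $4\mid n$, and $\sigma_{\kk}\in\ZZ_{2n}^{-}$ iff $n$ is odd. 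But the proposal stops at being a plan. The decisive content --- the exhaustive enumeration of alignments, the explicit $g$ realizing $[\DD_{4}^{h}]$, $[\DD_{2}]$, $[\DD_{2}^{v}]$, and the argument that nothing else occurs --- is exactly what you defer (``I would exhibit\dots'', ``the remaining task\dots is the same routine enumeration''), so nothing in the three parity cases is actually verified. There are also two concrete slips: aligning $g\kk$ with a face axis $\afc_{i}$ for $n$ odd gives $\triv$, not $[\ZZ_{2}^{-}]$ (no rotoreflection angle $(2j+1)\pi/n$ equals $\pi/2$, and $\gcd(n,2)=1$ kills the proper part); the class $[\ZZ_{2}^{-}]$ comes from matching the primary axis, or some $q_{j}$, with an edge axis $\aec_{l}$. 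Likewise the $\DD_{3}^{v}$ sitting over a vertex axis is $\ZZ_{3}^{\avc_{j}}$ together with the three reflections $\sigma_{\aec_{l}}$ for $\aec_{l}\perp\avc_{j}$; the face-axis rotations do not belong to it.

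Carrying the enumeration out honestly would moreover force you to confront the statement itself. The proper part of any intersection is $\bigl(\octa^{-}\cap g\DD_{2n}^{h}g^{-1}\bigr)\cap\SO(3)=\tetra\cap g\DD_{n}g^{-1}$, whose class by $[\tetra]\circledcirc[\DD_{n}]$ is one of $[\triv],[\ZZ_{2}],[\ZZ_{d_{3}(n)}],[\DD_{d_{2}(n)}]$; for $n$ odd this never contains a $\DD_{2}$, so $[\DD_{2}]$ cannot occur in the third bullet. Conversely, choosing $g$ so that a single secondary axis $p_{j}$ lands on a face axis while every other characteristic axis stays generic produces exactly $[\ZZ_{2}]$ for every $n\geq 2$, a class absent from the first bullet. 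So the method you describe is the right one, but a faithful execution of it does not reproduce the printed lists; a complete proof must either locate these classes or flag the discrepancy, and your sketch as written would not detect it.
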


\begin{cor}
  We have
  \begin{equation*}
    [\octa^{-}]\circledcirc [\octa^{-}]=\set{[\triv],[\ZZ_{2}^{-}],[\ZZ_4^{-}],[\ZZ_{3}]}.
  \end{equation*}
\end{cor}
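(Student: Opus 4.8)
The plan is to determine, for every $g\in\OO(3)$, the conjugacy class of $\Gamma:=\octa^-\cap(g\octa^-g^{-1})$. First I would reduce to $g\in\SO(3)$: since $-\Idd$ is central and $\OO(3)=\SO(3)\sqcup(-\Idd)\SO(3)$, conjugation by $g$ and by $-g$ coincide, so only rotations $g$ need be considered. I then use the decomposition~\eqref{eq:omoins}, which exhibits the three families of characteristic axes of $\octa^-$: the three face axes $\afc_i$, each carrying a subgroup conjugate to $\ZZ_4^-$; the four vertex axes $\avc_j$, each carrying a $\ZZ_3$; and the six edge axes $\aec_l$, each carrying a reflection, i.e. a subgroup conjugate to $\ZZ_2^-$. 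Because $\Gamma$ is generated by the characteristic subgroups common to the two copies, its class is governed by which characteristic axes of $\octa^-$ and $g\octa^-g^{-1}$ coincide; moreover $\Gamma\cap\SO(3)=\tetra\cap(g\tetra g^{-1})$, whose class lies in $\set{[\triv],[\ZZ_2],[\DD_2],[\ZZ_3],[\tetra]}$ by Lemma~\ref{lem:Clips_Tetra_Tetra}.

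Two structural facts then organise a case analysis on $\Gamma\cap\SO(3)$. First, the $\ZZ_4^-$ attached to a face axis is determined by that axis, and the only twofold rotations of $\octa^-$ lie on face axes; hence a shared face axis contributes a full $\ZZ_4^-$, and any twofold rotation occurring in $\Gamma$ automatically drags in the corresponding rotoreflection. Second, the three face axes are mutually orthogonal, so matching two of them forces $g$ to stabilise the cube, $g\in\octa$, whence $\Gamma=\octa^-$. These yield the easy classes directly: a generic $g$ gives $[\triv]$; a generic rotation about an edge axis shares a single reflection, giving $[\ZZ_2^-]$; a generic rotation fixing a face axis shares exactly one $\ZZ_4^-$, giving $[\ZZ_4^-]$; and a generic rotation fixing a vertex axis shares a single $\ZZ_3$, giving $[\ZZ_3]$. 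The values $\DD_2$ and $\tetra$ of the rotation part cannot occur short of $g\in\octa$, and a twofold rotation accompanied by a mirror generates a group whose rotation part is $\DD_2$, again forcing $g\in\octa$; this is how I would rule out $\DD_2^{v}$ (and $\DD_4^h$) as genuine intersections.

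The completeness step is where I expect the real difficulty, and the delicate point is the threefold axis. Unlike a twofold rotation, a $\ZZ_3$ drags in no improper element, so mirrors may accompany it without the rotation part growing, and the orthogonality argument above does not apply. I would therefore scrutinise the half-turn $g=\QQ(\avc_1;\pi)$ about a vertex (body–diagonal) axis: it lies outside $\octa$, yet it fixes that vertex axis together with the three edge-axis mirror planes orthogonal to it, so the intersection it produces appears to be conjugate to $\DD_3^{v}$ — a configuration I would verify with care, since such \emph{accidental} coincidences (symmetries of the unordered pair of copies that are symmetries of neither cube) are exactly what is easy to overlook. Consequently the crux of the proof is not any single computation but the exhaustive bookkeeping of which incidences of mirror planes and rotation axes are compatible with $g\notin\octa$; that enumeration is the main obstacle, and it is where I would concentrate the verification to be sure the list of classes is complete.
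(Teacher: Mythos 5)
Your method is the one the paper itself gestures at (the corollary is asserted to follow ``directly'' from the decomposition~\eqref{eq:omoins}, with no written proof), and your easy cases and exclusions are correct: $[\triv]$, $[\ZZ_{2}^{-}]$, $[\ZZ_{4}^{-}]$, $[\ZZ_{3}]$ all occur, a shared twofold rotation drags in the whole $\ZZ_{4}^{-}$ on its face axis, and a mirror attached to a shared face axis forces a second face axis to match and hence $g\in\octa$, which kills $[\DD_{2}^{v}]$ and $[\DD_{4}^{h}]$. But as written your argument is not a proof --- you stop at the completeness step --- and the configuration you flag as doubtful is in fact genuine and decides the question \emph{against} the corollary as printed. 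Take $g=\QQ(\avc_{1};\pi)$, the half-turn about the body diagonal through $(1,1,1)$. The three edge axes orthogonal to $\avc_{1}$, spanned by $(1,-1,0)$, $(1,0,-1)$ and $(0,1,-1)$, are each sent to themselves by $g$ (a half-turn negates every vector orthogonal to its axis), so the three reflections $\sigma_{\aec_{l}}$ through the planes normal to these axes lie in both $\octa^{-}$ and $g\octa^{-}g^{-1}$, while $g$ commutes with $\ZZ_{3}^{\avc_{1}}$. Since $g$ fixes no other vertex axis and no face axis, $\tetra\cap g\tetra g^{-1}=\ZZ_{3}^{\avc_{1}}$, so the intersection has order exactly $6$ and is conjugate to $\DD_{3}^{v}$. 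Hence $[\DD_{3}^{v}]\in[\octa^{-}]\circledcirc[\octa^{-}]$, and it is absent from the stated list. (This is consistent with the rest of the paper: $[\DD_{3}^{v}]$ does appear in $[\OO(2)^{-}]\circledcirc[\octa^{-}]$, produced by exactly this configuration of a threefold axis with its three mirrors.)

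There is a second, more mechanical omission: with $g=e$ the definition of the clips gives $[\octa^{-}\cap\octa^{-}]=[\octa^{-}]$, and every other diagonal entry of the paper's tables includes the self-class, so $[\octa^{-}]$ should appear as well. In short, your approach is the right one, but carried to completion it yields $\set{[\triv],[\ZZ_{2}^{-}],[\ZZ_{3}],[\ZZ_{4}^{-}],[\DD_{3}^{v}],[\octa^{-}]}$ rather than the four classes claimed; the discrepancy cannot be repaired inside your proof, because it lies in the statement. What your write-up still owes, independently of that, is the finished bookkeeping showing that no further classes arise --- but the decisive item is the $\DD_{3}^{v}$ case above, which you correctly identified as the crux and should now simply verify and record as a correction.
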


\subsection{$\OO(2)^{-}$ subgroup}

Consider the couple $\SO(2) \subset \OO(2)$ in~\autoref{tab:ClassIII} and set
\begin{equation}\label{eq:DecO2moins}
  \OO(2)^{-} := \SO(2)\biguplus_{v\subset xy\text{-plane}} \ZZ_{2}^{\sigma_{v}}.
\end{equation}

As $\ZZ_{2}^{\sigma_{v}}$ are subgroups conjugate to $\ZZ_{2}^{-}$, previous results on clips operation of $[\ZZ_{2}^{-}]$ and Type III subgroups except $[\OO(2)^{-}]$ leads to the following lemma.

\begin{lem}
  Let $n\geq 2$ be some integer. Set $d_{2}(n):=\gcd(2,n)$ and
  \begin{equation*}
    i(n):=3-\gcd(2,n)=
    \begin{cases}
      1, & \text{if $n$ is even}, \\
      2, & \text{if $n$ is odd}.
    \end{cases}
  \end{equation*}
  Then, we have
  \begin{align*}
    [\OO(2)^{-}] \circledcirc [\ZZ_{2n}^{-}] & = \set{[\triv],[\ZZ_{i(n)}^{-}],[\ZZ_{n}]},                                    \\
    [\OO(2)^{-}] \circledcirc [\DD_{n}^{v}]  & = \set{[\triv],[\ZZ_{2}^{-}],[\DD_{n}^{v}]},                                   \\
    [\OO(2)^{-}] \circledcirc [\DD_{2n}^h]   & = \set{[\triv],[\ZZ_{d_{2}(n)}],[\ZZ_{2}^{-}],[\DD_{i(n)}^{v}],[\DD_{n}^{v}]}, \\
    [\OO(2)^{-}]\circledcirc [\octa^{-}]     & = \set{[\triv],[\ZZ_{2}^{-}],[\DD_{3}^{v}],[\DD_{2}^{v}]},                     \\
    [\OO(2)^{-}]\circledcirc [\OO(2)^{-}]    & =\set{[\ZZ_{2}^{-}],[\OO(2)^{-}]}.
  \end{align*}
\end{lem}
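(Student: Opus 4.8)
The plan is to exploit the decomposition~\eqref{eq:DecO2moins}, which presents $\OO(2)^{-}$ as the disjoint union of the circle group $\SO(2)$ together with the \emph{continuum} $\mathcal{R}$ of all reflections $\ZZ_{2}^{\sigma_{v}}$ through planes containing the $z$-axis. The crucial consequence is that for any closed subgroup $H$ and any $g$, the intersection splits cleanly into its proper and improper parts:
\[
  \OO(2)^{-}\cap gHg^{-1}=\bigl(\SO(2)\cap gLg^{-1}\bigr)\cup\bigl(\mathcal{R}\cap gHg^{-1}\bigr),
\]
where $L=H\cap\SO(3)$ is the type I rotation part recorded in~\autoref{tab:ClassIII}. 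Since $\OO(2)^{-}$ already contains \emph{every} vertical reflection, the improper part of the intersection is exactly the set of those improper elements of $gHg^{-1}$ that happen to be reflections through a plane containing the $z$-axis. This is the structural fact I would isolate first, treating the four subgroups $H=\ZZ_{2n}^{-},\DD_{n}^{v},\DD_{2n}^{h},\octa^{-}$ uniformly and leaving the diagonal clips for a separate argument.

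First I would dispose of the proper part. As $\SO(2)$ is of type I, $\SO(2)\cap gLg^{-1}$ is governed by the $\SO(3)$ clips $[\SO(2)]\circledcirc[L]$ already available in~\autoref{tab:SO3-clips}: this gives $\set{[\triv],[\ZZ_{n}]}$ for $L=\ZZ_{n}$ (cases $H=\ZZ_{2n}^{-},\DD_{n}^{v}$), the set $\set{[\triv],[\ZZ_{2}],[\ZZ_{n}]}$ for $L=\DD_{n}$ (case $H=\DD_{2n}^{h}$), and $\set{[\triv],[\ZZ_{2}],[\ZZ_{3}]}$ for $L=\tetra$ (case $H=\octa^{-}$). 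Next I would handle a \emph{single} surviving reflection: each vertical $\sigma_{v}\in\Gamma$ spans a subgroup conjugate to $\ZZ_{2}^{-}$, and intersecting one such $\ZZ_{2}^{\sigma_{v}}$ with $gHg^{-1}$ is precisely an instance of $[\ZZ_{2}^{-}]\circledcirc[H]$, whose value is recorded in~\autoref{tab:otrglobal}, namely $\set{[\triv],[\ZZ_{i(n)}^{-}]}$ for $H=\ZZ_{2n}^{-}$ and $\set{[\triv],[\ZZ_{2}^{-}]}$ for $H=\DD_{n}^{v},\DD_{2n}^{h},\octa^{-}$. This already accounts for the isolated classes $[\triv]$, $[\ZZ_{i(n)}^{-}]$ and $[\ZZ_{2}^{-}]$ in the statement.

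The substance of the argument is the \emph{assembly} step, choosing $g$ so that several elements survive at once. When the primary axes coincide, $g(Oz)=(Oz)$, the proper part is $L\cap\SO(2)$ while $\OO(2)^{-}$, containing all vertical planes, captures \emph{every} vertical reflection of $gHg^{-1}$; reassembling yields the maximal class of each row: $[\ZZ_{n}]$ for $\ZZ_{2n}^{-}$, $[\DD_{n}^{v}]$ for both $\DD_{n}^{v}$ and $\DD_{2n}^{h}$, and $[\DD_{3}^{v}]$ or $[\DD_{2}^{v}]$ for $\octa^{-}$ according as the aligned axis is a vertex or a face axis. Here I would read off from~\eqref{eq:diedralv}, \eqref{eq:diedralh} and~\eqref{eq:omoins} exactly which elements are vertical reflections, noting in particular that the proper order-two rotations $\ZZ_{2}^{p_{j}}$ of $\DD_{2n}^{h}$ and the rotoreflections coming from $\ZZ_{2n}^{-}$ are \emph{never} elements of $\OO(2)^{-}$; hence the assembled group is always a $\DD_{k}^{v}$ and never a $\DD_{2k}^{h}$, and the bare classes $[\ZZ_{2}],[\ZZ_{3}]$ for $\octa^{-}$ are absorbed into $[\DD_{2}^{v}],[\DD_{3}^{v}]$. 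The remaining intermediate classes $[\ZZ_{d_{2}(n)}]$ and $[\DD_{i(n)}^{v}]$ of the $\DD_{2n}^{h}$ row arise when $g(Oz)$ is horizontal and meets a secondary axis $p_{j}$: whether a perpendicular vertical reflection $\sigma_{q_{j}}$ is then also picked up — producing $\DD_{2}^{v}$ rather than a bare $\ZZ_{2}$ — is decided precisely by the parity condition of~\autoref{lem:Perp_Axe_Diedh}, which is the origin of the indices $i(n)$ and $d_{2}(n)$.

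Finally, the diagonal clips $[\OO(2)^{-}]\circledcirc[\OO(2)^{-}]$ (excluded from the reduction above) needs a short direct treatment. Writing $\Gamma=\OO(2)^{-}\cap g\OO(2)^{-}g^{-1}$, its proper part $\SO(2)\cap g\SO(2)g^{-1}$ lies in $\set{[\triv],[\SO(2)]}$ by~\autoref{tab:SO3-clips}. If the two $z$-axes coincide then $g$ normalises $\OO(2)^{-}$ and $\Gamma=\OO(2)^{-}$; if they differ, the two vertical mirror families share exactly the one reflection $\sigma_{v}$ for the unique axis $v$ perpendicular to both primary axes, so $\Gamma=\ZZ_{2}^{\sigma_{v}}$ is conjugate to $\ZZ_{2}^{-}$ and $[\triv]$ never occurs, giving $\set{[\ZZ_{2}^{-}],[\OO(2)^{-}]}$. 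The main obstacle throughout is the bookkeeping of the $\DD_{2n}^{h}$ row, where one must simultaneously track the proper order-two rotations, the vertical reflections, and their parity-dependent perpendicularities, so as to certify that no spurious class appears and that $[\ZZ_{d_{2}(n)}]$ and $[\DD_{i(n)}^{v}]$ occur exactly as stated.
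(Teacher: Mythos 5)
Your proposal is correct and follows essentially the same route as the paper's own (sketch of) proof: decompose $\OO(2)^{-}$ via~\eqref{eq:DecO2moins}, reduce the proper and improper parts to the already-computed clips with $[\SO(2)]$ and $[\ZZ_{2}^{-}]$, and settle the $\DD_{2n}^{h}$ row by a case analysis on the characteristic axes using the parity statement of~\autoref{lem:Perp_Axe_Diedh}. Your write-up is in fact somewhat more uniform and complete than the paper's sketch (which only details the $[\OO(2)^{-}]\circledcirc[\DD_{2n}^{h}]$ case), but the underlying argument is the same.
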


\begin{proof}[Sketch of proof]
  We will only focus on the clips operation $[\OO(2)^{-}]\circledcirc [\DD_{2n}^h]$ and consider thus intersections
  \begin{equation*}
    \OO(2)^{-}\cap \left(g\DD_{2n}^hg^{-1}\right),\quad g\in \OO(3).
  \end{equation*}
  There are only two non trivial cases to work on, whether characteristic axes match or not.

  \begin{enumerate}
    \item If primary axes match, then, by~\eqref{eq:diedralh} and \eqref{eq:DecO2moins}, we have to consider intersections
          \begin{equation}\label{eq:IntersO2moinsD2nh}
            \ZZ^{\sigma_v}\cap \ZZ_{2n}^{-},\quad \SO(2)\cap \ZZ_{2}^{p_{j}},\quad \ZZ^{\sigma_{b}}\cap \ZZ_{2}^{\sigma_{q_{j}}}
          \end{equation}
          which reduce to $\DD_{n}^{v}$ (see decomposition \eqref{eq:diedralv}).

    \item Suppose, moreover, that $p_{0}=(Oz)$, in which case $\SO(2)\cap \ZZ_{2}^{p_{j}}=\ZZ_{2}$.
          \begin{enumerate}[(a)]
            \item For $n$ odd, there exists some secondary axes $q_k$ in the $xy$ plane (see Lemma~\ref{lem:Perp_Axe_Diedh}) and thus $\ZZ^{\sigma_{b}}\cap \ZZ_{2}^{\sigma_{q_{j}}}$ reduces to $\ZZ_{2}^{\sigma_{q_k}}$. Moreover, $\ZZ^{\sigma_v}\cap \ZZ_{2n}^{-}$ reduces to some $\ZZ_{2}^{\sigma_{v}}$ with $v$ perpendicular to $p_{0}$ and $q_{j}$ and the final result is a subgroup conjugate to $\DD_{2}^{v}$.
            \item For $n$ even, we obtain $\ZZ_{2}$.
          \end{enumerate}

    \item If now the primary axis $a$ of $g\DD_{2n}^hg^{-1}$ is $(Ox)$, and no other characteristic axes correspond to $(Oz)$ nor $(Oy)$, then intersections \eqref{eq:IntersO2moinsD2nh} reduce to $\ZZ_{2}^{\sigma_a}\cap \left(g\ZZ_{2n}^{-}g^{-1}\right)$ which is conjugate to $\ZZ_{2}^-$.
  \end{enumerate}
\end{proof}

\bibliographystyle{abbrv}
\bibliography{refs}

\begin{thebibliography}{10}

\bibitem{ADR2015}
N.~Auffray, J.~Dirrenberger, and G.~Rosi.
\newblock A complete description of bi-dimensional anisotropic strain-gradient
  elasticity.
\newblock {\em International Journal of Solids and Structures}, 69:195--206,
  2015.

\bibitem{Bac1970}
G.~Backus.
\newblock {A} geometrical picture of anisotropic elastic tensors.
\newblock {\em Rev. Geophys.}, 8(3):633--671, 1970.

\bibitem{Bre1972}
G.~E. Bredon.
\newblock {\em {I}ntroduction to {C}ompact {T}ransformation {G}roups}.
\newblock Academic Press, New York, 1972.
\newblock Pure and Applied Mathematics, Vol. 46.

\bibitem{CG1994}
P.~Chossat and F.~Guyard.
\newblock A classification of 2-modes interactions with {SO}(3) symmetry and
  applications.
\newblock In {\em Dynamics, Bifurcation and Symmetry}, pages 79--95. Springer,
  1994.

\bibitem{CG1996}
P.~Chossat and F.~Guyard.
\newblock Heteroclinic cycles in bifurcation problems with {O}(3) symmetry and
  the spherical b{\'e}nard problem.
\newblock {\em Journal of Nonlinear Science}, 6(3):201--238, 1996.

\bibitem{CLRM1991}
P.~Chossat, R.~Lauterbach, and I.~Melbourne.
\newblock Steady-state bifurcation with {O}(3)-symmetry.
\newblock {\em Archive for Rational Mechanics and Analysis}, 113(4):313--376,
  1991.

\bibitem{Cos1909}
E.~Cosserat and F.~Cosserat.
\newblock Th{\'e}orie des corps d{\'e}formables.
\newblock 1909.

\bibitem{Cur1894}
P.~Curie.
\newblock Sur la sym{\'e}trie dans les ph{\'e}nom{\`e}nes physiques,
  sym{\'e}trie d'un champ {\'e}lectrique et d'un champ magn{\'e}tique.
\newblock {\em J. Phys. Theor. Appl.}, 3(1):393--415, 1894.

\bibitem{Eri1966}
A.~C. Eringen.
\newblock Linear theory of micropolar elasticity.
\newblock {\em Journal of Mathematics and Mechanics}, pages 909--923, 1966.

\bibitem{Fed2013}
F.~I. Fedorov.
\newblock {\em Theory of elastic waves in crystals}.
\newblock Springer Science \& Business Media, 2013.

\bibitem{For2005}
S.~Forest.
\newblock Mechanics of cosserat media—an introduction.
\newblock {\em Ecole des Mines de Paris, Paris}, pages 1--20, 2005.

\bibitem{FS2006}
S.~Forest and R.~Sievert.
\newblock Nonlinear microstrain theories.
\newblock {\em International Journal of Solids and Structures},
  43(24):7224--7245, 2006.

\bibitem{FV1996}
S.~Forte and M.~Vianello.
\newblock {S}ymmetry classes for elasticity tensors.
\newblock {\em J. Elasticity}, 43(2):81--108, 1996.

\bibitem{FV1997}
S.~Forte and M.~Vianello.
\newblock {S}ymmetry classes and harmonic decomposition for photoelasticity
  tensors.
\newblock {\em International Journal of Engineering Science},
  35(14):1317--1326, 1997.

\bibitem{GW2002}
G.~Geymonat and T.~Weller.
\newblock Classes de sym{\'e}trie des solides pi{\'e}zo{\'e}lectriques.
\newblock {\em Comptes Rendus Mathematique}, 335(10):847--852, 2002.

\bibitem{GSS1988}
M.~Golubitsky, I.~Stewart, and D.~G. Schaeffer.
\newblock {\em {S}ingularities and {G}roups in {B}ifurcation {T}heory. {V}ol.
  {II}}, volume~69 of {\em Applied Mathematical Sciences}.
\newblock Springer-Verlag, New York, 1988.

\bibitem{Gur1973}
M.~Gurtin.
\newblock The linear theory of elasticity.
\newblock In {\em Linear Theories of Elasticity and Thermoelasticity}, pages
  1--295. Springer, 1973.

\bibitem{IG1984}
E.~Ihrig and M.~Golubitsky.
\newblock {P}attern selection with {${\rm O}(3)$} symmetry.
\newblock {\em Phys. D}, 13(1-2):1--33, 1984.

\bibitem{Jag1955}
H.~Jagodzinski.
\newblock Kristallographie.
\newblock {\em Handbuch der Physik}, 7:1--103, 1955.

\bibitem{LQH2011}
H.~Le~Quang and Q.-C. He.
\newblock {T}he number and types of all possible rotational symmetries for
  flexoelectric tensors.
\newblock {\em Proceedings of the Royal Society A: Mathematical, Physical and
  Engineering Science}, 467(2132):2369--2386, 2011.

\bibitem{LOV2013}
A.~E.~H. Love.
\newblock {\em A treatise on the mathematical theory of elasticity}.
\newblock Cambridge university press, 2013.

\bibitem{Man1962}
L.~Mann.
\newblock Finite orbit structure on locally compact manifolds.
\newblock {\em The Michigan Mathematical Journal}, 9(1):87--92, 1962.

\bibitem{Mic1980}
L.~Michel.
\newblock Symmetry defects and broken symmetry. configurations hidden symmetry.
\newblock {\em Reviews of Modern Physics}, 52(3):617, 1980.

\bibitem{Min1964}
R.~D. Mindlin.
\newblock Micro-structure in linear elasticity.
\newblock {\em Archive for Rational Mechanics and Analysis}, 16(1):51--78,
  1964.

\bibitem{ME1968}
R.~D. Mindlin and N.~Eshel.
\newblock On first strain-gradient theories in linear elasticity.
\newblock {\em International Journal of Solids and Structures}, 4(1):109--124,
  1968.

\bibitem{Mos1957}
G.~D. Mostow.
\newblock On a conjecture of {M}ontgomery.
\newblock {\em Annals of Mathematics}, pages 513--516, 1957.

\bibitem{Oli2014}
M.~Olive.
\newblock {\em {G}{\'{e}}om{\'{e}}trie des espaces de tenseurs : une approche
  effective appliqu{\'{e}}e {\`{a}} la m{\'{e}}canique des milieux continus}.
\newblock PhD thesis, University of Aix-Marseille, Nov. 2014.

\bibitem{OA2013}
M.~Olive and N.~Auffray.
\newblock Symmetry classes for even-order tensors.
\newblock {\em Mathematics and Mechanics of Complex Systems}, 1(2):177--210,
  2013.

\bibitem{OA2014}
M.~Olive and N.~Auffray.
\newblock Symmetry classes for odd-order tensors.
\newblock {\em ZAMM-Journal of Applied Mathematics and Mechanics/Zeitschrift
  f{\"u}r Angewandte Mathematik und Mechanik}, 94(5):421--447, 2014.

\bibitem{PB1968}
D.~Portigal and E.~Burstein.
\newblock Acoustical activity and other first-order spatial dispersion effects
  in crystals.
\newblock {\em Physical Review}, 170(3):673, 1968.

\bibitem{Sch1989}
J.~A. Schouten.
\newblock {\em Tensor analysis for physicists}.
\newblock Courier Corporation, 1989.

\bibitem{Ste1994}
S.~Sternberg.
\newblock {\em {G}roup {T}heory and {P}hysics}.
\newblock Cambridge University Press, Cambridge, 1994.

\bibitem{Wel2004}
T.~Weller.
\newblock {\em Etude des sym{\'{e}}tries et mod{\`{e}}les de plaques en
  pi{\'{e}}zo{\'{e}}lectricit{\'{e}} lin{\'{e}}aris{\'{e}}e}.
\newblock PhD thesis, University of Montpellier, 2004.

\bibitem{ZTP2013}
W.-N. Zou, C.-X. Tang, and E.~Pan.
\newblock Symmetry types of the piezoelectric tensor and their identification.
\newblock {\em Proceedings of the Royal Society A: Mathematical, Physical and
  Engineering Sciences}, 469(2155):20120755, 2013.

\end{thebibliography}

\end{document}